\newtheorem{theorem}{Theorem}[section]
\newtheorem{lemma}[theorem]{Lemma}
\newtheorem{corollary}[theorem]{Corollary}
\newtheorem{proposition}[theorem]{Proposition}
\newtheorem{definition}[theorem]{Definition}
\newtheorem{remark}[theorem]{Remark}
\newtheorem{conjecture}[theorem]{Conjecture}
\newcommand{\abs}[1]{\left\lvert#1\right\rvert}
\definecolor{darkgreen}{rgb}{0.33, 0.42, 0.18}
\providecommand{\Gen}{\mathop{\rm Gen}\nolimits}%
\def\C{\mathcal{C}}
\def\X{\mathcal{X}}
\def\E{\mathcal{E}}
\def\F{\mathcal{F}}
\def\SS{\mathcal{S}}
\def\T{\mathcal{T}}
\def\V{\mathsf{V}}
\def\W{\mathsf{W}}
\providecommand{\rk}{\mathop{\rm rk}\nolimits}%
\providecommand{\End}{\mathop{\rm End}\nolimits}%
\providecommand{\Ext}{\mathop{\rm Ext}\nolimits}%
\providecommand{\Hom}{\mathop{\rm Hom}\nolimits}%
\providecommand{\ind}{\mathop{\rm ind}\nolimits}%
\providecommand{\Fac}{\mathop{\rm Gen}\nolimits}%
\providecommand{\Filt}{\mathop{\rm Filt}\nolimits}%
\newcommand{\module}{\mathop{\rm mod}\nolimits}%
\providecommand{\perpe}{\perp_{0,1}}%
\begin{document}
	
	\title{Mutating signed $\tau$-exceptional sequences}

	\author[Buan]{Aslak Bakke Buan}
	\address{
		Department of Mathematical Sciences \\
		Norwegian University of Science and Technology \\
		7491 Trondheim \\
		Norway \\
	}
	\email{aslak.buan@ntnu.no}
	
	\author[Marsh]{Bethany Rose Marsh}
	\address{School of Mathematics \\ 
		University of Leeds \\ 
		Leeds, LS2 9JT \\ 
		United Kingdom \\
	}
	\email{B.R.Marsh@leeds.ac.uk}

	\begin{abstract}
		\sloppy  We establish some properties of $\tau$-exceptional sequences for finite-dimensional algebras. In an earlier paper we established a bijection between the set of ordered support $\tau$-tilting modules and the set of complete signed $\tau$-exceptional sequences. We describe the action of the symmetric group on the latter induced by its natural action on the former. Similarly, we describe the effect on a $\tau$-exceptional sequence obtained by mutating the corresponding ordered support $\tau$-tilting module via a construction of Adachi-Iyama-Reiten.
	\end{abstract}

\dedicatory{Dedicated to the memory of Helmut Lenzing}
	
	\thanks{
This work was supported by FRINAT grant number 301375 from the
Norwegian Research Council.  The authors would like to thank the Isaac Newton Institute for Mathematical Sciences, Cambridge, for support and hospitality during the programme Cluster Algebras and Representation Theory where work on this paper was undertaken. The first named author would also like to thank the Centre of Advanced Study, Oslo for support and hospitality during the programme Representation Theory: Combinatorial Aspects and Applications. }
	
	\maketitle
	
\section*{Introduction}

The usual notion of exceptional sequences in a module category \sloppy over a finite-dimensional algebra~\cite{cb-exc,ringel-exc} has some drawbacks. In particular, for some non-hereditary algebras, complete exceptional sequences do not exist (see e.g.~\cite[Introduction]{bm-exc}).
In \cite{bm-exc}, we introduced the notion of $\tau$-exceptional sequences, motivated by $\tau$-tilting theory~\cite{air}. Such sequences can be regarded as an alternative generalisation of exceptional sequences to the non-hereditary case with the property that complete $\tau$-exceptional sequences always exist. We also introduced signed $\tau$-exceptional sequences, motivated by the concept of signed exceptional sequences for hereditary algebras~\cite{igusatodorov}, and the link to picture groups \cite{igusatodorov, itw}.

The aim of this paper is to establish further properties of (signed) $\tau$-exceptional sequences, which we now proceed to discuss in more detail.
Recall that a subcategory of a module category is said to be a \emph{wide subcategory} if it is closed under kernels, cokernels and extensions (and therefore inherits an abelian structure).
Let $\Lambda$ be the path algebra of a quiver with $n$ vertices and $\module \Lambda$ the category of finite-dimensional left $\Lambda$-modules.
A $\Lambda$-module $X$ is said to be \emph{exceptional} if $\Ext^1(X,X)=0$. If $X$ is exceptional, then the subcategory $X^{\perpe}$ consisting of modules $Y$ such that
$\Hom(X,Y)=0$ and $\Ext^1(X,Y)=0$ is known as the \emph{perpendicular category} of $X$~\cite[\S1]{geiglelenzing}.  By~\cite[Prop. 1.1]{geiglelenzing},~\cite[Thm. 2.3]{schofield}, $X^{\perpe}$ is a wide subcategory of $\module \Lambda$ equivalent to the module category of the path algebra of a quiver with $n-1$ vertices.
An \emph{exceptional sequence} is a sequence $(X_1,\ldots ,X_r)$ where $X_r$ is an indecomposable exceptional $\Lambda$-module and $(X_1,\ldots ,X_{r-1})$ is an exceptional sequence in $X_r^{\perpe}$.

If $r=n$, then an exceptional sequence $(X_1,X_2,\ldots ,X_r)$ is said to be \emph{complete}. Note that a complete exceptional sequence gives rise to a flag of wide subcategories
$$\C_n\subseteq \C_{n-1}\subseteq \cdots \subseteq \C_0=\module \Lambda$$
where $\C_i=(X_1\amalg X_2\amalg\cdots \amalg X_i)^{\perpe}$.

The article~\cite{igusatodorov} introduced the notion of a \emph{signed exceptional sequence}. Let $D^b(\Lambda)$ denote the bounded derived category of $\module \Lambda$. Since $\Lambda$ is hereditary,  every indecomposable
object in $D^b(\Lambda)$ is of the form $X[i]$, where $[i]$ denotes the $i$th power of the shift and $X$ is an indecomposable $\Lambda$-module. We write $|X[i]|=X$.

A signed exceptional sequence in $\module \Lambda$ is a sequence $(X_1,X_2,\ldots ,X_r)$ of indecomposable objects in $D^b(\Lambda)$ which are each of the form $Y[j]$ for $j=0$ or $j=1$ for some $\Lambda$-module $Y$, where $X_i=Y[1]$ is allowed only if $|X_i|$ is relatively projective in $(|X_{i+1}|\amalg |X_{i+2}|\amalg \cdots \amalg |X_r|)^{\perpe}$, and where
$(|X_1|,|X_2|,\ldots, |X_r|)$ is an exceptional sequence.

In~\cite{igusatodorov}, signed exceptional sequences were introduced in order to define the \emph{cluster morphism category} of $\Lambda$, whose objects are the wide subcategories of $\module \Lambda$. The morphisms are described by the signed exceptional sequences. It is shown that the classifying space of the cluster morphism category is a $K(\pi,1)$, where $\pi$ is the \emph{picture group}~\cite{itw} of $\Lambda$.

Now let $\Lambda$ be an arbitrary finite-dimensional algebra over a field. Suppose that $\Lambda$ has $n$ simple modules. 
The linchpin of the definition of $\tau$-exceptional sequence is the notion of a $\tau$-perpendicular category~\cite[\S1]{jasso}, which plays the role of the Geigle-Lenzing perpendicular category in the general case. A $\Lambda$-module $X$ is said to be \emph{$\tau$-rigid} if $\Hom(X,\tau X)=0$. Then the $\tau$-perpendicular category of $X$ is the subcategory $J(X)$ consisting of modules $Y$ such that $\Hom(X,Y)=0$ and $\Hom(Y,\tau X)=0$. By~\cite[Cor. 3.22]{bst},~\cite[Thm. 4.12]{DIRRT}
$J(X)$ is a wide subcategory of $\module \Lambda$.
By~\cite[Thm. 3.8]{jasso} $J(X)$ is equivalent to the module category of an algebra, which has $n-1$ simple modules if $X$ is indecomposable.
A $\tau$-exceptional sequence in $\module \Lambda$ is a sequence $(X_1,X_2,\ldots ,X_r)$ of $\Lambda$-modules where $X_r$ is $\tau$-rigid and $(X_1,\ldots ,X_{r-1})$ is a $\tau$-exceptional sequence in $J(X_r)$ (regarded as a module category).
Signed $\tau$-exceptional sequences are then defined in a similar way to signed exceptional sequences (see above, or Section~\ref{sec:background}). A $\tau$-exceptional sequence, or signed $\tau$-exceptional sequence, is said to be {\em complete} if it has $n$ terms. Clearly, complete $\tau$-exceptional sequences and signed $\tau$-exceptional sequences exist for any finite-dimensional algebra.
In \cite{mt}, an interesting interpretation in terms of strandardly stratifying systems was given.

Also, $\tau$-exceptional sequences were used in~\cite{bm-wide} to define the morphisms in the $\tau$-cluster morphism category of the module category of a $\tau$-tilting-finite algebra, whose objects are the wide subcategories of the module category. This was extended to an arbitrary finite-dimensional algebra in~\cite{bh}. In~\cite[Thm. 4.16]{hansonigusa}, it was shown that, if $\Lambda$ is a Nakayama algebra, the classifying space of the cluster morphism category is a $K(\pi,1)$ for the picture group~\cite{itw} of $\Lambda$.

In this paper, we study some properties of $\tau$-exceptional sequences. In Section~\ref{sec:uniqueness}, we prove our first main result, which is restricted to the case of $\tau$-tilting finite algebras, i.e algebras with a finite number of basic $\tau$-tilting modules. Under this assumption, we show that if
$(X_1,X_2,\ldots ,X_i,\ldots X_n)$ and $(X_1,X_2,\ldots ,X'_i,\ldots ,X_n)$ are complete $\tau$-exceptional sequences, then $X'_i \cong X_i$. We conjecture that this result holds without this assumption.

Suppose now that $\Lambda$ is an arbitrary finite-dimensional algebra.
In~\cite[Thm. 5.4]{bm-exc}, it was shown that there is a bijection between complete signed $\tau$-exceptional sequences in $\module \Lambda$ and ordered support $\tau$-tilting objects in $\module \Lambda$. Here a support $\tau$-tilting object is a pair $(P,M)$ where $P$ is projective, $M$ is $\tau$-rigid and $\Hom(P,M)=0$, and an ordered support $\tau$-tilting object is an ordering of the indecomposable summands of $P$ and $M$ (retaining the information as to whether each object is a summand of $P$ or $M$).
Thus the symmetric group acts naturally on the set of ordered support $\tau$-tilting objects and hence, via the bijection, on the set of complete signed $\tau$-exceptional sequences. In Section~\ref{sec:transposition}, we give an explicit description of the action of a simple transposition.

Support $\tau$-tilting objects can be mutated (see~\cite[\S 2.3]{air}), and thus so can ordered $\tau$-tilting objects. In Section~\ref{sec:mutation}, we describe the effect on the corresponding complete $\tau$-exceptional sequences, translated via the bijection above. We also combine the action of the symmetric group and mutations to give an action of the larger \emph{mutation group} considered in~\cite{kingpressland}.

Since the braid group on $n$ strands acts transitively on the set of exceptional sequences over a hereditary algebra~\cite{cb-exc,ringel-exc} with $n$ simple modules up to isomorphism, a natural question is whether this braid group acts transitively on the set of all signed $\tau$-exceptional sequences. In Section~\ref{sec:examples} we show that, for the Kronecker algebra, 
there is no transitive action of the braid group on $2$ strands (i.e. the infinite cyclic group)
on the set of signed $\tau$-exceptional sequences which factors through the action of the mutation group referred to above on the set of such sequences.
We also give an example showing that the obvious generalisation of the definition of the braid action on exceptional sequences to the (signed) $\tau$-exceptional case does not work, at least without substantial modification.

\section{Background}
\label{sec:background}
Let $\Lambda$ be a finite-dimensional basic algebra,
and denote by $\module \Lambda$ the category of finite-dimensional left $\Lambda$-modules.
We let $\tau$ denote the Auslander-Reiten translate on $\module \Lambda$.
We assume any subcategories $\X$
to be full and closed under isomorphism;
we define
$\X^{\perp} = \{Y \in \module \Lambda \mid \Hom(\X, Y) = 0\}$
and define ${^{\perp}\X}$ dually.

Consider $\C(\module \Lambda) = \module \Lambda \oplus \module \Lambda[1]$ as a full subcategory of the bounded derived category $D^b(\module \Lambda)$.
For an indecomposable object $U$ in $\C(\module \Lambda)$, we set
$\abs{U} = U$ if $U$ is in $\module \Lambda$ and $\abs{U} = U[-1]$ if $U$ is in $\module \Lambda[1]$. 
If $U$ in $\C(\module\Lambda)$ is basic, we denote by $\rk(U)$ the number of indecomposable summands of $U$.

We recall some notions from~\cite[\S0]{air} (in some cases stated slightly differently, but equivalently).
A $\Lambda$-module $M$ is called {\em $\tau$-rigid} if $\Hom(M,\tau M) = 0$. A (usually assumed basic) object $M \oplus P[1]$ in $\C(\module \Lambda)$ is said to be a
{\em support $\tau$-rigid object} if $M$ is a $\tau$-rigid $\Lambda$-module and $P$ is a projective $\Lambda$-module with $\Hom_\Lambda(P, M)= 0$.
An object $U = M \oplus P[1]$ is said to be a \emph{support $\tau$-tilting object} if $\rk(U) =\rk(\Lambda)$. If, in addition, $P=0$, $U$ is said to be a \emph{$\tau$-tilting module}.

Recall that a subcategory $\W$ of $\module\Lambda$ is said to be \emph{wide} if it is closed under kernels, cokernels and extensions. If a wide subcategory $\W$ of $\module \Lambda$ is equivalent to a module category
$\module \Lambda'$, we set $\rk \W \coloneqq \rk \Lambda'$.

Objects which are $\tau$-rigid give rise to a particular class of wide subcategories.

\begin{definition}\cite[Defn. 3.3]{jasso}
For a support $\tau$-rigid object $U = M \oplus P[1]$ in $\C(\module \Lambda)$
the category
$$J(U) =  (M \oplus P)^{\perp} \cap {^\perp{(\tau M)}}$$
is called a $\tau$-perpendicular subcategory.
\end{definition}

In the following Theorem, (a) is from~\cite[Thm. 4.12]{DIRRT},~\cite[Cor. 3.22]{bst} and (c) is from \cite[Thm. 3.8]{jasso}.
For (b), see~\cite[Prop. 4.12]{en} and~\cite[Lemma 4.7]{en-sak}.

\begin{theorem}\label{prop:counting}
 A $\tau$-perpendicular subcategory $J(U)$ of $\module \Lambda$ is:
 \begin{itemize}
 \item[(a)] wide; 
 \item[(b)] functorially finite;
 \item[(c)] equivalent to $\module \Lambda_U$ for some finite-dimensional algebra $\Lambda_U$ with
	 $\rk(\Lambda) = \rk(U) + \rk(\Lambda_U)$.
\end{itemize}
\end{theorem}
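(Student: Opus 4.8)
These three assertions are drawn from the cited references; here is how I would organise and motivate a self-contained account. The common starting point is the \emph{Bongartz completion} of $U$: one enlarges $U = M \oplus P[1]$ to a support $\tau$-tilting object $T = U \oplus C$ in $\C(\module\Lambda)$. By~\cite{air}, $T$ determines a functorially finite torsion class $\T$ and torsion-free class $\F = \T^\perp$, and the pair $(M,P)$ singles out, inside the lattice $\operatorname{tors}\Lambda$ of torsion classes, a subinterval whose associated wide subcategory (in the sense of Ingalls--Thomas) is precisely $J(U)$. With this dictionary in place, all three parts become statements about this interval and the wide subcategory attached to it.

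For part~(a): closure of $J(U) = (M\oplus P)^{\perp}\cap {}^{\perp}(\tau M)$ under extensions is immediate, since applying $\Hom(M\oplus P,-)$ and $\Hom(-,\tau M)$ to a short exact sequence with outer terms in $J(U)$ forces the middle term into $J(U)$ as well. The substance is closure under kernels and cokernels, where the plain perpendicular category $(M\oplus P)^{\perp}$ would fail: one must use $\tau$-rigidity of $M$ to control the $\Ext^1(M,-)$ terms that arise on applying $\Hom(M,-)$ to the canonical factorisation $0\to(\text{image of }f)\to Y\to\coker f\to 0$ of a morphism $f$ in $J(U)$. The plan is to run this analysis directly, using the defining conditions of $J(U)$ to annihilate the obstructing terms, or --- more cleanly --- to invoke~\cite[Thm.~4.12]{DIRRT},~\cite[Cor.~3.22]{bst}, which prove exactly that the wide subcategory attached to such an interval of torsion classes is closed under kernels and cokernels.

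For part~(c): this is Jasso's $\tau$-tilting reduction~\cite[Thm.~3.8]{jasso}. The idea I would follow is that $J(U)$ is a length abelian category, and Jasso's construction exhibits inside it a projective generator built from the completion $C$; then $\Lambda_U$ is realised as the endomorphism algebra of that generator and $J(U)\simeq\module\Lambda_U$. The rank formula $\rk(\Lambda)=\rk(U)+\rk(\Lambda_U)$ then follows from a counting argument: the reduction induces a bijection between support $\tau$-tilting objects of $\module\Lambda$ having $U$ as a direct summand and support $\tau$-tilting objects of $J(U)\simeq\module\Lambda_U$; comparing the numbers of indecomposable summands on both sides --- each equal to the rank of the relevant algebra, with $\rk(U)$ of them fixed on the $\Lambda$-side --- yields the identity.

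For part~(b): here I would use that $\T$ and $\F$ are functorially finite (being cut out by a support $\tau$-tilting object, \cite{air}) and deduce that the wide subcategory $J(U)$ lying between them is again functorially finite in $\module\Lambda$; this is carried out in~\cite[Prop.~4.12]{en} and~\cite[Lemma~4.7]{en-sak}, which I would simply cite. The main obstacle throughout is not the bookkeeping but the two genuinely hard inputs --- closure under kernels and cokernels in~(a) and the construction of the reduced algebra $\Lambda_U$ in~(c) --- both of which rest on the nontrivial interaction between $\tau$-rigidity, Bongartz completion, and the lattice of torsion classes, and which I would not attempt to reprove beyond the citations above.
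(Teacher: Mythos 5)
Your account is correct and follows essentially the same route as the paper, which likewise establishes (a) by citing \cite[Thm.~4.12]{DIRRT} and \cite[Cor.~3.22]{bst}, (c) by citing \cite[Thm.~3.8]{jasso}, and (b) by citing \cite[Prop.~4.12]{en} and \cite[Lemma~4.7]{en-sak}, with no independent argument given. The extra sketches you supply (extension-closure directly from the Hom-vanishing conditions, and the rank formula via Jasso's reduction bijection) are consistent with those sources and do not change the substance.
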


Let $\W$ be a $\tau$-perpendicular subcategory of $\module \Lambda$. Since
$\W$ is equivalent to a module category, we can also consider the $\tau$-tilting
theory of $\W$. Let $\C(\W) = \W \oplus \W[1]$, as a subcategory of $D^b(\module \Lambda)$. Note that since $\W$ is an exact
subcategory of $\module \Lambda$, there is a canonical isomorphism 
$$\Hom_{D^b(\W)}(X,Y[1]) \simeq \Hom_{D^b(\module \Lambda)}(X,Y[1]),$$
for modules $X,Y$ in $\W$,
so we can also consider $\C(\W)$ as a subcategory of $D^b(\W)$.

Note that in general $\tau_\W X \not \simeq \tau X$ for a module $X$ in $\W$,
and hence there in general there exist modules which are $\tau$-rigid in $\W$
but not $\tau$-rigid in $\module\Lambda$. But we do have the following.
For a support $\tau$-rigid object $V = N \oplus Q[1]$ in $\C(\W) \subset \C(\module \Lambda)$, 
set
$$J_\W(V) =  (N \oplus Q)^{\perp} \cap {^{\perp} {(\tau_{\W} N)}}\cap \W.$$

The following useful Lemma follows from~\cite[Prop. 5.8]{auslandersmalo} (see also~\cite[Prop. 1.2]{air}).

\begin{lemma}\label{lem:extcond}
	Let $\W$ be a $\tau$-perpendicular subcategory of $\module \Lambda$, and assume $X, Y$ lie in $\W$.
	Then the  following hold:
	\begin{itemize}
		\item[(a)] $\Hom(Y, \tau_{\W} X) = 0$ if and only if $\Ext^1(X, \W \cap \Gen Y) = 0$.
		\item[(b)] $X$ is $\tau$-rigid in $\W$ if and only if $\Ext^1(X, \W \cap \Gen X) =0$. 
	\end{itemize}
\end{lemma}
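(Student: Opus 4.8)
The plan is to reduce both statements to the classical result of Auslander--Smalø \cite[Prop. 5.8]{auslandersmalo} (equivalently \cite[Prop. 1.2]{air}), namely that for modules $X',Y'$ over a finite-dimensional algebra one has $\Hom(Y',\tau X')=0$ if and only if $\Ext^1(X',\Gen Y')=0$. The point is that, by Theorem~\ref{prop:counting}(c), $\W$ is equivalent to a module category $\module \Lambda_U$, so $\W$ carries its own Auslander--Reiten theory and the classical statement applies inside $\W$. Fix an equivalence $F\colon \W \xrightarrow{\sim} \module \Lambda_U$. Equivalences of abelian categories commute with the Auslander--Reiten translate, carry $\Gen$ to $\Gen$, and induce equivalences of bounded derived categories, so transporting the classical statement back along $F$ gives: for $X,Y$ in $\W$, $\Hom_\W(Y,\tau_\W X)=0$ if and only if $\Ext^1_\W(X,\Gen_\W Y)=0$, where $\Gen_\W Y$ is the subcategory of objects of $\W$ generated by $Y$ \emph{within} $\W$.

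It then remains to match these intrinsic invariants of $\W$ with the ambient ones. First, $\Hom_\W=\Hom_\Lambda$ on $\W$ since $\W$ is a full subcategory. Second, $\Ext^1_\W(X,Z)=\Ext^1_\Lambda(X,Z)$ for $X,Z$ in $\W$: this is exactly the canonical isomorphism $\Hom_{D^b(\W)}(X,Z[1])\simeq \Hom_{D^b(\module\Lambda)}(X,Z[1])$ recalled before the statement, valid because $\W$ is an exact (indeed wide) subcategory. Third, $\Gen_\W Y = \W\cap \Gen Y$: if $Z\in\Gen_\W Y$ then $Z$ is an epimorphic image in $\W$ of some $Y^m$, and since the inclusion $\W\hookrightarrow\module\Lambda$ is exact it preserves images, so $Z$ is an epimorphic image of $Y^m$ in $\module\Lambda$ too, whence $Z\in\W\cap\Gen Y$; conversely, if $Z\in\W$ and $Y^m\twoheadrightarrow Z$ in $\module\Lambda$, this is a morphism of $\W$ whose image computed in $\W$ again agrees with its image in $\module\Lambda$, namely $Z$, so $Z\in\Gen_\W Y$. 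Substituting these three identifications into the transported statement yields part~(a). Part~(b) is the case $Y=X$ of part~(a), since by definition $X$ is $\tau$-rigid in $\W$ exactly when $\Hom_\W(X,\tau_\W X)=0$.

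I expect the only genuinely delicate point to be the identification $\Gen_\W Y=\W\cap\Gen Y$: a wide subcategory is not closed under arbitrary quotients in $\module\Lambda$, so this equality is not formal, and it is precisely the exactness of the inclusion $\W\hookrightarrow\module\Lambda$ (hence its compatibility with the images through which $\Gen$ is defined) that makes it work. The remaining steps are bookkeeping: transporting a known statement across the equivalence $F$ and through the identifications of $\Hom$ and $\Ext^1$ already set up in the paper.
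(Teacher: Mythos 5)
Your proposal is correct and follows the same route as the paper, which simply cites the Auslander--Smal\o{} result \cite[Prop.~5.8]{auslandersmalo} (see also \cite[Prop.~1.2]{air}) applied inside $\W$ viewed as a module category. Your explicit verification of the identifications $\Hom_\W=\Hom_\Lambda$, $\Ext^1_\W=\Ext^1_\Lambda$ and $\Gen_\W Y=\W\cap\Gen Y$ (the last using exactness of the inclusion of the wide subcategory) just fills in the details the paper leaves implicit.
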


\begin{lemma}\label{lem:inc}
	Let $\W' \subseteq \W$ be $\tau$-perpendicular subcategories of $\module \Lambda$,
	and let $X$ be an object in $\W'$.
	\begin{itemize}
		\item[(a)] If $X$ is $\tau$-rigid in $\W$, then $X$ is also $\tau$-rigid in $\W'$.
		\item[(b)] We have $J_{\W}(X) \subseteq J_{\W'}(X)$.
	\end{itemize}		
\end{lemma}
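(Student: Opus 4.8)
The plan is to deduce both parts directly from Lemma~\ref{lem:extcond}, the only extra input being the obvious inclusion $\W'\cap\Gen Z\subseteq\W\cap\Gen Z$, valid for any module $Z$ because $\W'\subseteq\W$.

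For (a) I would apply Lemma~\ref{lem:extcond}(b) inside $\W$ to rewrite the hypothesis that $X$ is $\tau$-rigid in $\W$ as $\Ext^1(X,\W\cap\Gen X)=0$. Since $\W'\cap\Gen X\subseteq\W\cap\Gen X$, this at once gives $\Ext^1(X,\W'\cap\Gen X)=0$; applying Lemma~\ref{lem:extcond}(b) now inside $\W'$ — legitimate because $\W'$ is a $\tau$-perpendicular subcategory of $\module\Lambda$ and $X\in\W'$ — translates this back into the assertion that $X$ is $\tau$-rigid in $\W'$. This in particular ensures that $J_{\W'}(X)$ is defined, so that the comparison in (b) makes sense.

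For (b), let $Y$ be an object of $J_\W(X)$ that lies in $\W'$; I want to show $Y\in J_{\W'}(X)$. The condition $\Hom(X,Y)=0$ is part of $Y\in J_\W(X)$ and is also one of the defining conditions of $J_{\W'}(X)$, so it carries over for free. For the remaining condition, membership $Y\in J_\W(X)$ gives $\Hom(Y,\tau_\W X)=0$, which by Lemma~\ref{lem:extcond}(a) applied in $\W$ (with $X,Y\in\W$) is equivalent to $\Ext^1(X,\W\cap\Gen Y)=0$. Shrinking the test class to $\W'\cap\Gen Y\subseteq\W\cap\Gen Y$ yields $\Ext^1(X,\W'\cap\Gen Y)=0$, and Lemma~\ref{lem:extcond}(a) applied in $\W'$ (with $X,Y\in\W'$) turns this into $\Hom(Y,\tau_{\W'}X)=0$. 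Hence $Y$ satisfies all the defining conditions of $J_{\W'}(X)$, giving $J_\W(X)\cap\W'\subseteq J_{\W'}(X)$. Note the structural parallel with (a): part (a) is the same two-step translation run with $X$ in place of $Y$ and with Lemma~\ref{lem:extcond}(b) in place of~(a).

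The only thing one has to be careful about above is that Lemma~\ref{lem:extcond} is being invoked legitimately in each of $\W$ and $\W'$, i.e.\ that the relevant modules genuinely lie in the relevant $\tau$-perpendicular subcategory — which holds by hypothesis together with part~(a) — so for this one-sided inclusion there is essentially no obstacle. If one wishes to upgrade (b) to the two-sided statement $J_{\W'}(X)=J_\W(X)\cap\W'$, the reverse inclusion $J_{\W'}(X)\subseteq J_\W(X)$ is where the real work lies: it cannot be read off the inclusion $\W'\cap\Gen Y\subseteq\W\cap\Gen Y$, which now points the wrong way. I would handle it either by parametrising $\W'$ as $\W'=J_\W(U')$ for a support $\tau$-rigid object $U'$ of $\C(\W)$ and then using the composition law $J_{J_\W(U')}(X)=J_\W(X\oplus U')$, which gives $J_{\W'}(X)=J_\W(X\oplus U')\subseteq J_\W(X)$ immediately; or, in a self-contained way, by first checking that $\W\cap\Gen Y=\Gen(\W'\cap\Gen Y)\cap\W$ (every $\W$-quotient of $Y$ factors through a right $\W'$-approximation whose image, by wideness of $\W'$, lies in $\W'\cap\Gen Y$) and then transferring Ext-projectivity of $X$ from $\W'$ up to $\W$ — this last transfer, which has to contend with higher extensions, being the genuine obstacle.
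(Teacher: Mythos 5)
Your argument is, in substance, the paper's: part (a) is essentially word for word the same two-step translation through Lemma~\ref{lem:extcond}(b), and part (b) uses the same mechanism (pass to $\Ext^1(X,\W\cap\Gen Y)=0$, shrink to $\W'\cap\Gen Y$, translate back inside $\W'$). The one place you deviate is that in (b) you restrict from the start to $Y\in J_{\W}(X)\cap\W'$, and accordingly you only conclude $J_{\W}(X)\cap\W'\subseteq J_{\W'}(X)$, rather than the inclusion $J_{\W}(X)\subseteq J_{\W'}(X)$ asserted in the statement.

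That deviation is in fact the correct call rather than a gap on your side. Since $J_{\W'}(X)\subseteq\W'$ by definition, the unrestricted inclusion would force $J_{\W}(X)\subseteq\W'$, which fails in general: for $\Lambda$ the path algebra of $1\to 2\to 3$, take $\W=\module\Lambda$, $\W'=J(P_1\oplus P_3)=\add S_2$ and $X=S_2$; then $\tau S_2=S_3$, so $S_1\in J_{\W}(X)$, whereas $J_{\W'}(X)=0$ because $S_2$ is the projective generator of $\W'$. The paper's own proof delivers no more than your version: its final step (``hence $Y$ lies in $J_{\W'}(X)$''), and its application of Lemma~\ref{lem:extcond}(a) inside $\W'$ with the module $Y$, both tacitly assume $Y\in\W'$. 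So the statement should be read as $J_{\W}(X)\cap\W'\subseteq J_{\W'}(X)$ (equivalently, with the hypothesis $Y\in\W'$ added), which is exactly what you prove by the paper's own argument; be aware only that where (b) is invoked later, in the proof of Lemma~\ref{lem:uniqueW} to compare $\rk J_{\W}(A_m)$ with $\rk J_{\W\cap\W'}(A_m)$, the intersected form requires a little extra care. Your closing discussion of the reverse inclusion and of the equality $J_{\W'}(X)=J_{\W}(X)\cap\W'$ goes beyond what the lemma claims and is not needed here.
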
	

\begin{proof}
	\begin{itemize}
		\item[(a)] By Lemma \ref{lem:extcond}, $X$ is $\tau$-rigid in $\W$ implies
		$\Ext^1(X,\W \cap \Gen X ) = 0$. 
		Hence also $\Ext^1(X,\W' \cap \Gen X ) = 0$, and 
		applying Lemma \ref{lem:extcond}  again we obtain that $X$ is $\tau$-rigid in $\W'$. 
		\item[(b)] If $Y$ is in $J_{\W}(X)$, then we have that 
		$\Ext^1(X,\W \cap \Gen Y ) = 0$ by Lemma \ref{lem:extcond}. Hence also 
		$\Ext^1(X,\W' \cap \Gen Y ) = \Ext^1_{\W'}(X, \Gen Y)= 0$. We have $\Hom(X,Y)= 0$ by
		assumption. Hence $Y$ lies in  $J_{\W'}(X)$.
	\end{itemize}	
\end{proof}

The following bijection is crucial. It was proved in \cite{bm-exc}, and can be seen as a refinement
of~\cite[Thm. 3.16]{jasso}.

\begin{theorem}\cite[Prop. 5.6]{bm-exc} \label{bijection}
	Let $\W$ be a $\tau$-perpendicular subcategory of $\module\Lambda$, 
	and let $U$ be a support $\tau$-rigid object in $\C(\W)$.
	Then there is a bijection $\E^{\W}_{U}$ from
	$$\{X \in \ind(\C(\W)) \mid X \oplus U \text{  support } \tau\text{-rigid in } \C(\W)\}
	\setminus \ind U $$
	to
	$$\{X \in  \ind(\C(J_{\W}(U)) \mid \text{$X$ is support $\tau$-rigid in $\C(J_{\W}(U))$} \}.$$
\end{theorem}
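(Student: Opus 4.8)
The plan is to reduce to the case $\W=\module\Lambda$ and then to obtain $\E^{\W}_{U}$ as a summand-by-summand refinement of Jasso's $\tau$-tilting reduction \cite[Thm.~3.16]{jasso}, which is precisely the corresponding bijection for \emph{complete} support $\tau$-rigid objects. First, by Theorem~\ref{prop:counting}(c) the category $\W$ is equivalent to some $\module\Lambda'$, and any such equivalence extends to $\C(\W)\xrightarrow{\sim}\C(\module\Lambda')$ and identifies the relative translate $\tau_\W$, the class of support $\tau$-rigid objects of $\C(\W)$, and the operators $J_\W(-)$ (all of which are intrinsic to the abelian category $\W$) with the corresponding data over $\Lambda'$; hence we may assume $\W=\module\Lambda$. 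Write $J=J_{\module\Lambda}$ and $\C=\C(\module\Lambda)$. (The cases $U=0$, where both sides are the set of indecomposable support $\tau$-rigid objects of $\C$, and $U$ a support $\tau$-tilting object, where both sides are empty, are immediate.)

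Next I would construct the map. By \cite[\S2.3]{air} every support $\tau$-rigid object of $\C$ is a summand of a basic support $\tau$-tilting object; write $B(V)$ for its Bongartz completion. Jasso's reduction is a bijection $\mathcal{G}_U$ from the set of basic support $\tau$-tilting objects $T$ of $\C$ with $\add U\subseteq\add T$ onto the set of basic support $\tau$-tilting objects of $\C(J(U))$, and it is an isomorphism of the support $\tau$-tilting posets, hence of the exchange quivers. I would first upgrade this to a summand-wise statement: since the reduction is given by an explicit operation on the complement of $U$ in $T$, it restricts to a bijection $\phi_T$ from the $\rk(\Lambda)-\rk(U)=\rk(J(U))$ indecomposable summands of $T$ not lying in $\add U$ onto the indecomposable summands of $\mathcal{G}_U(T)$, and the family $\{\phi_T\}$ is compatible with mutation of support $\tau$-tilting objects. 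For an indecomposable $X$ with $X\oplus U$ support $\tau$-rigid and $X\notin\add U$, we have $\add U\subseteq\add B(X\oplus U)$ and $X$ is a summand of $B(X\oplus U)$ not in $\add U$, so we may set $\E^{\W}_{U}(X):=\phi_{B(X\oplus U)}(X)$; being a summand of a support $\tau$-tilting object of $\C(J(U))$, it is an indecomposable support $\tau$-rigid object there, so the codomain is as claimed.

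It then remains to prove bijectivity, and the crux is that $\phi_T(X)$ depends only on $X$ and $U$, not on the chosen completion $T$ inside $\mathcal{T}_U:=\{T:\add U\subseteq\add T\}$. Given this independence, the inverse is constructed symmetrically — send an indecomposable support $\tau$-rigid $Z$ of $\C(J(U))$ to $\phi_T^{-1}(Z)$, where $T:=\mathcal{G}_U^{-1}(B_{J(U)}(Z))$ and $B_{J(U)}$ denotes Bongartz completion inside $\C(J(U))$ — and the two maps visibly undo each other. To establish the independence I would use the $g$-vector version of Jasso's reduction: there is a surjective linear map $\rho_U$, depending only on $U$ (essentially the projection of the $g$-vector fan of $\Lambda$ along the $g$-vectors of the summands of $U$), with $g$-vector of $\phi_T(X)$ equal to $\rho_U(g$-vector of $X)$ for every valid $T$; since the $g$-vector map on indecomposable support $\tau$-rigid objects is injective \cite[Thm.~5.1]{air}, this pins down $\phi_T(X)$ independently of $T$, and the same two ingredients give injectivity of $\E^{\W}_{U}$ (two objects with the same image have $g$-vectors differing by an element of $\ker\rho_U$, which, after completing, is excluded by the fan structure) and surjectivity (every $Z$ is hit, as above). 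Alternatively, when the relevant support $\tau$-tilting exchange quivers are connected — e.g.\ in the $\tau$-tilting finite case — the independence follows directly from the mutation-compatibility of the $\phi_T$. The steps I expect to be delicate are making the ``explicit operation on the complement'' precise enough to extract the $\phi_T$ together with their compatibility with mutation, and establishing the $g$-vector compatibility of the reduction; as a by-product one also obtains the ``chain rule'' $J(X\oplus U)=J_{J(U)}\big(\E^{\W}_{U}(X)\big)$, which is what makes the bijection iterable, as needed for $\tau$-exceptional sequences.
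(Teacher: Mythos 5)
Two preliminary remarks: your reduction to $\W=\module\Lambda$ and the trivial cases $U=0$, $U$ support $\tau$-tilting are fine; and note that the paper itself gives no proof of Theorem~\ref{bijection} — it is imported verbatim from \cite[Prop.~5.6]{bm-exc}, where the bijection is \emph{constructed} by explicit module-theoretic formulas (case-distinguishing whether the indecomposable $X$ lies in $\Gen U$ or not, and whether it is a module or a shifted projective), and bijectivity is verified directly, with no use of $g$-vector fans. So your proposal must stand on its own, and as it stands it has genuine gaps, which are exactly the points you flag as ``delicate''.

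The first gap is the assertion that Jasso's reduction ``restricts to a bijection $\phi_T$'' on indecomposable summands because it is ``an explicit operation on the complement of $U$ in $T$''. It is not: Jasso's map sends $T\supseteq U$ to (essentially) $\Hom(T_U,f_U T)$, where $f_U$ factors out the torsion part with respect to $\Gen U$, so an indecomposable summand $X$ of $T$ with $X\in\Gen U\setminus\add U$ is annihilated, and shifted projective summands $P[1]$ are not touched by the functor at all; such summands reappear only in the support (shifted-projective) part of the reduced support $\tau$-tilting pair, and assigning to each of them a specific indecomposable shifted projective of $\C(J_\W(U))$ — and proving this assignment, as well as the one on the remaining summands, is independent of the chosen completion $T$ — is precisely the content of \cite{bm-exc}; it is why the statement is about $\C(-)$ (the signed setting) at all. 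The second gap is that your well-definedness, injectivity and surjectivity arguments all hang on the claimed linear $g$-vector compatibility of the reduction ($g(\phi_T(X))=\rho_U(g(X))$ for every completion $T$, with $\rho_U$ depending only on $U$). That is a substantial theorem in its own right, for which you offer no proof; the known fan-compatibility statements of this kind are proved using summand-wise reduction data of the very sort you are trying to construct, so circularity is a real danger, and your fallback (connectedness of the exchange graph) only covers $\tau$-tilting finite algebras, whereas the theorem is asserted for arbitrary finite-dimensional $\Lambda$. In short, the strategy is coherent at the level of outline, but the two deferred steps are not routine refinements of \cite[Thm.~3.16]{jasso}; they are the heart of the result, and the cited proof in \cite{bm-exc} resolves them by a direct construction rather than by the route you propose.
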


We denote the inverse of $\E^{\W}_{U}$ by $\F^{\W}_{U}$ and, when $\W = \module \Lambda$, we denote the map in Theorem \ref{bijection} and its inverse simply by $\E_{U}$ and $\F_{U}$.

Using the bijection in Theorem \ref{bijection}, the following was proved in 
 \cite[Thms. 1.4, 1.7]{bm-wide} for the $\tau$-tilting finite case.
It was generalised in \cite[Thms. 6.4, 6.12]{bh} to arbitrary finite-dimensional algebras.

\begin{theorem}\label{thm:formulas} \cite{bh,bm-wide}
Let $U \oplus V$ be a $\tau$-rigid object in $\C(\module \Lambda)$
\begin{itemize}
\item[(a)] We have $J_{J(U)}\, (\E_{U}(V))= J(U \oplus V)$.
\item[(b)] We have $\E_{U \oplus V} = (\E^{J(U)}_{\E_{U}(V)})\, \E_{U}$.
\end{itemize}
\end{theorem}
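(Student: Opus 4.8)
The plan is to establish part (a) first and then to read off part (b). For the induction used in (b) it is convenient to prove both statements with $\module\Lambda$ replaced by an arbitrary $\tau$-perpendicular subcategory $\W$: for every support $\tau$-rigid object $U\oplus V$ in $\C(\W)$,
$$J_{J_\W(U)}\bigl(\E^{\W}_{U}(V)\bigr)=J_\W(U\oplus V)\qquad\text{and}\qquad \E^{\W}_{U\oplus V}=\E^{J_\W(U)}_{\E^{\W}_{U}(V)}\circ\E^{\W}_{U},$$
the stated theorem being the case $\W=\module\Lambda$. Here $\E^{\W}_{U}$ is extended additively to basic support $\tau$-rigid objects by $\E^{\W}_{U}(\bigoplus_i X_i):=\bigoplus_i\E^{\W}_{U}(X_i)$; this extension sends a basic support $\tau$-rigid object of $\C(\W)$ having $U$ as a summand to a basic support $\tau$-rigid object of $\C(J_\W(U))$, and since $\E^{\W}_{U}$ is a bijection on indecomposables (Theorem~\ref{bijection}, assembled as in~\cite[\S5]{bm-exc},~\cite[Thm.~3.16]{jasso}) it preserves the number of indecomposable summands. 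Write $\W':=J_\W(U)$ and $W:=\E^{\W}_{U}(V)$, so that $\rk W=\rk V$.

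For (a), both $J_{\W'}(W)$ and $J_\W(U\oplus V)$ are wide subcategories of $\W$ equivalent to module categories, and they have the same rank: applying Theorem~\ref{prop:counting}(c) twice,
$$\rk J_{\W'}(W)=\rk\W'-\rk W=(\rk\W-\rk U)-\rk V=\rk\W-\rk(U\oplus V)=\rk J_\W(U\oplus V).$$
Hence it suffices to prove the single inclusion $J_\W(U\oplus V)\subseteq J_{\W'}(W)$: once we have this, $J_\W(U\oplus V)$ is a wide subcategory of $J_{\W'}(W)$ which is equivalent to a module category of the same rank, and a full-rank wide subcategory of a module category is the whole category (see e.g.~\cite{bm-wide}). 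For this inclusion we may assume $V$ indecomposable, since $J_{\W'}(\bigoplus_i\E^{\W}_{U}(V_i))=\bigcap_i J_{\W'}(\E^{\W}_{U}(V_i))$ and $J_\W(U\oplus V)\subseteq J_\W(U\oplus V_i)$ for each $i$. With $V$ indecomposable, recall the construction of $W=\E^{\W}_{U}(V)$ underlying Theorem~\ref{bijection} (essentially Jasso reduction~\cite[\S3]{jasso}, equivalently the Adachi--Iyama--Reiten construction~\cite[\S2.3]{air}; see also~\cite[\S5]{bm-exc}): either $V$ already lies in $\W'$ and $W=V$, or $W$ is, up to shift, the cokernel of a minimal left $\add U$-approximation of $V$, respectively the kernel of a minimal right $\add U$-approximation of $V$, according to the case. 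Now let $Y\in J_\W(U\oplus V)$; then $Y\in\W'$, $Y$ is right-orthogonal to the module and shifted-projective summands of both $U$ and $V$, and by Lemma~\ref{lem:extcond}(a) it is $\Ext^1$-orthogonal to all of these against $\W\cap\Gen Y$ (for the shifted-projective summands because those are projective in $\W$ and $\Ext^1$ in $\W$ coincides with $\Ext^1$ in $\module\Lambda$). Feeding these vanishings into $\Hom(-,Y)$, $\Hom(Y,-)$ and the corresponding $\Ext^1$-sequences attached to the defining short exact sequence of $W$, one reads off $\Hom(|W|,Y)=0$ and $\Ext^1(|W|,\W\cap\Gen Y)=0$; since $\W'\subseteq\W$ the latter gives $\Ext^1(|W|,\W'\cap\Gen Y)=0$, i.e.\ $Y\in{}^{\perp}\tau_{\W'}|W|$ by Lemma~\ref{lem:extcond}(a) applied inside $\W'$, and hence $Y\in J_{\W'}(W)$. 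The bulk of the work is here: the defining short exact sequence of $W$ takes several shapes, according to whether the $\add U$-approximation is a monomorphism, an epimorphism, or zero, and to whether $V$ and $W$ are modules or shifted projectives, and the diagram chase must be carried out in each case. I expect this case analysis to be the main obstacle; everything else in (a) is formal.

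For (b): by (a) the codomains of $\E^{\W}_{U\oplus V}$ and of $\E^{\W'}_{W}$ coincide, both being the set of indecomposable support $\tau$-rigid objects of $\C(J_\W(U\oplus V))=\C(J_{\W'}(W))$, and the domains coincide as well, since an indecomposable $X$ with $X\oplus U\oplus V$ support $\tau$-rigid in $\C(\W)$ and not a summand of $U\oplus V$ gives $X\oplus U$ support $\tau$-rigid, $\E^{\W}_{U}(X)\oplus W=\E^{\W}_{U}(X\oplus V)$ support $\tau$-rigid in $\C(\W')$, and $\E^{\W}_{U}(X)\notin\ind W$ (using the additive and injective properties of $\E^{\W}_{U}$), and conversely. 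Thus $\E^{\W}_{U\oplus V}$ and $\E^{\W'}_{W}\circ\E^{\W}_{U}$ are bijections between the same pair of sets, and it remains to show they agree. I would do this by induction on $\rk U$. In the base case $\rk U=1$ one compares the two constructions directly: an $\add(U\oplus V)$-approximation is computed in two stages, first across $\add U$ and then across $\add W$, so the short exact sequence defining $\E^{\W}_{U\oplus V}$ factors into those defining $\E^{\W}_{U}$ and $\E^{\W'}_{W}$; this is again a case analysis of the type used for (a), and is the real content of (b). If $\rk U\ge 2$, write $U=U_1\oplus U'$ with $U_1$ indecomposable; the base case gives $\E^{\W}_{U\oplus V}=\E^{J_\W(U_1)}_{\E^{\W}_{U_1}(U'\oplus V)}\circ\E^{\W}_{U_1}$ and $\E^{\W}_{U}=\E^{J_\W(U_1)}_{\E^{\W}_{U_1}(U')}\circ\E^{\W}_{U_1}$, while the inductive hypothesis for (b) applied inside $J_\W(U_1)$ to $\E^{\W}_{U_1}(U')$ (of rank $\rk U-1$) and $\E^{\W}_{U_1}(V)$, together with part (a) (which identifies $J_{J_\W(U_1)}(\E^{\W}_{U_1}(U'))$ with $J_\W(U)$) and the additivity of $\E^{\W}_{U_1}$, rewrites $\E^{J_\W(U_1)}_{\E^{\W}_{U_1}(U'\oplus V)}$ as $\E^{J_\W(U)}_{\E^{\W}_{U}(V)}\circ\E^{J_\W(U_1)}_{\E^{\W}_{U_1}(U')}$; combining these identities yields $\E^{\W}_{U\oplus V}=\E^{J_\W(U)}_{\E^{\W}_{U}(V)}\circ\E^{\W}_{U}$. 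The induction step is thus routine bookkeeping, and the substance of (b) lies in the base-case comparison of the two reflection constructions.
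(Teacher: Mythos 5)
First, a point of comparison: this paper does not prove Theorem~\ref{thm:formulas} at all --- it is imported wholesale from \cite[Thms.~1.4, 1.7]{bm-wide} ($\tau$-tilting finite case) and \cite[Thms.~6.4, 6.12]{bh} (general case), so there is no internal proof to measure you against. Judged on its own terms, your proposal is a strategy outline rather than a proof: in both (a) and (b) you explicitly defer the substantive content. In (a) the claim that $\Hom(\abs{W},Y)=0$ and $\Ext^1(\abs{W},\W\cap\Gen Y)=0$ ``can be read off'' from the exact sequence defining $\E^{\W}_U(V)$ is precisely where the work lies (including the cases where $V$ or $W$ is a shifted projective, where there is no short exact sequence in $\W$ to chase), and you acknowledge the case analysis is not carried out. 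Likewise in (b) the base case $\rk U=1$, the two-stage factorisation of the $\add(U\oplus V)$-approximation, is flagged as ``the real content'' and left undone; the induction step you give is fine but is the easy part.

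There is also a genuine logical gap in the reduction you use for (a). You deduce $J_\W(U\oplus V)=J_{\W'}(W)$ from one inclusion plus equality of ranks, invoking ``a full-rank wide subcategory of a module category is the whole category.'' In this paper that statement is Lemma~\ref{lem:subcategoryrank}(b), which is proved \emph{only} under the standing assumption of $\tau$-tilting finiteness (both proofs offered rely on \cite[Thm.~4.18]{DIRRT}, respectively on \cite{DIJ} and Asai's bound for \emph{left-finite} semibricks), whereas Theorem~\ref{thm:formulas} is asserted for arbitrary finite-dimensional algebras. In general, semibricks can be strictly larger than the rank of the algebra (e.g.\ the quasi-simple regular modules over the Kronecker algebra are pairwise Hom-orthogonal), so counting simple objects does not by itself force equality; making the rank argument work in general is essentially the kind of statement about nested $\tau$-perpendicular subcategories that \cite{bh} establishes alongside the very theorem you are proving, so quoting it here risks circularity. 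Either restrict your argument to the $\tau$-tilting finite case (where it does go through, modulo the deferred diagram chases) or replace the rank-counting step by a direct proof of both inclusions, which is what the cited sources do.
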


We also recall the following.

\begin{lemma} \label{lem:widewide} \cite[Lemma 4.5]{bm-wide}
 Let $\V$ and $\W$ be wide subcategories of $\module \Lambda$ with $\V \subseteq \W$. Then $\V$ is a wide subcategory of $\W$.
\end{lemma}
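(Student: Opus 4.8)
The plan is to observe that the abelian structure carried by $\W$ is simply the one induced from $\module\Lambda$, so that the three closure conditions defining a wide subcategory transfer directly from the pair $(\V \subseteq \module\Lambda)$ to the pair $(\V \subseteq \W)$. The first and only substantive step is to recall the standard fact that, since $\W$ is closed under kernels and cokernels in $\module\Lambda$, the inclusion functor $\W \hookrightarrow \module\Lambda$ is exact: for any morphism $f$ in $\W$, the object $\ker f$ (resp.\ $\coker f$) formed in $\module\Lambda$ already lies in $\W$, and because $\W$ is a full subcategory this object, together with its structure morphism, satisfies the universal property of the kernel (resp.\ cokernel) of $f$ inside $\W$ as well. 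Consequently a sequence with terms in $\W$ is short exact in $\W$ if and only if it is short exact in $\module\Lambda$.

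Granting this, the remaining verifications are formal. Given a morphism $f \colon X \to Y$ with $X, Y \in \V$, we have $X, Y \in \W$ since $\V \subseteq \W$, so the kernel and cokernel of $f$ computed in $\W$ coincide with those computed in $\module\Lambda$ by the previous step; these lie in $\V$ because $\V$ is wide in $\module\Lambda$. Hence $\V$ is closed under kernels and cokernels in $\W$. For extensions, suppose $0 \to A \to B \to C \to 0$ is a short exact sequence in $\W$ with $A, C \in \V$. By the first step it is short exact in $\module\Lambda$, and since $\V$ is closed under extensions in $\module\Lambda$ we conclude $B \in \V$. Thus $\V$ is closed under extensions in $\W$, and combining the three facts, $\V$ is a wide subcategory of $\W$.

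The one point requiring genuine care, and the main obstacle, is the identification in the first paragraph: wideness is a notion relative to an ambient abelian category, so one must be explicit that ``kernel'', ``cokernel'' and ``short exact sequence'' denote the same thing whether interpreted in $\W$ or in $\module\Lambda$. Once that compatibility is pinned down, no further input beyond the definitions of wide subcategory for $\V \subseteq \module\Lambda$ and $\W \subseteq \module\Lambda$ is needed.
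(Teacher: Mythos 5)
Your proof is correct: the paper itself gives no argument here but simply cites \cite[Lemma 4.5]{bm-wide}, and your verification is the standard one underlying that reference. The key point you isolate---that kernels, cokernels and short exact sequences in $\W$ agree with those in $\module \Lambda$ because $\W$ is full and closed under kernels and cokernels---is exactly what makes the three closure conditions for $\V$ transfer, so nothing further is needed.
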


\section{Uniqueness}\label{sec:uniqueness}
Let $n$ be the number of simple $\Lambda$-modules.
Recall that a complete $\tau$-exceptional sequence in $\module \Lambda$ is a sequence $(X_1,X_2,\ldots ,X_n)$ of indecomposable $\Lambda$-modules where $X_n$ is $\tau$-rigid and $(X_1,\ldots ,X_{n-1})$ is a $\tau$-exceptional sequence in $J(X_n)$.
Moreover, a sequence $(X_1,X_2,\ldots ,X_n)$ of indecomposable objects 
in $\C(\module \Lambda)$ is a signed $\tau$-exceptional sequence, if   
(i) $X_n$ is either a $\tau$-rigid module or of the form $P[1]$ for some projective $\Lambda$-module $P$, and (ii) $(X_1,X_2,\ldots ,X_{n-1})$
is a signed $\tau$-exceptional sequence in $J(\abs{X_n})$. Note that this means that $X_{n-1}$ is either $\tau$-rigid in $J(\abs{X_n})$ (i.e. $\tau$-rigid in the equivalent module category), or $X_{n-1}= P'[1]$, where $P'$ is
(relative) projective in $J(\abs{X_n})$, and so on.

Recall that $\Lambda$ is said to be \emph{$\tau$-tilting finite} if it only has a finite number of indecomposable $\tau$-rigid modules.
In this section, we shall prove the following 
uniqueness result for $\tau$-exceptional sequences over such algebras:

\begin{theorem}\label{thm:unique}
	Let $\Lambda$ be a $\tau$-tilting finite algebra. Then the following
	hold.
	\begin{itemize}
	    \item[(a)] Let $(A_1, A_2, \dots, A_n)$ and $(B_1, B_2, \dots, B_n)$ be complete $\tau$-exceptional 
	sequences in $\module \Lambda$. If, for some $t \in \{1, \dots, n\}$, we have $A_i= B_i$ for
	all $i \neq t$, then also $A_t = B_t$.
	    \item[(b)] Let $(A_1, A_2, \dots, A_n)$ and $(B_1, B_2, \dots, B_n)$ be complete signed $\tau$-exceptional 
	sequences in $\C(\module \Lambda)$. If, for some $t \in \{1, \dots, n\}$, we have $\abs{A_i}= \abs{B_i}$ for
	all $i \neq t$, then also $\abs{A_t} = \abs{B_t}$.
	\end{itemize}
\end{theorem}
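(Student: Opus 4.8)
The plan is to follow, along any complete (signed) $\tau$-exceptional sequence $(C_1,\dots,C_m)$ in a $\tau$-perpendicular subcategory $\W$, the flag $\W=\W_m\supseteq\W_{m-1}\supseteq\dots\supseteq\W_0$ of $\tau$-perpendicular subcategories defined by $\W_{j-1}=J_{\W_j}(\abs{C_j})$. Using Theorem~\ref{thm:formulas}(a) together with the bijection of Theorem~\ref{bijection} (write $\W_j=J(U_0)$ and $\abs{C_j}=\E_{U_0}(V)$, so that $\W_{j-1}=J(U_0\oplus V)$), each $\W_j$ is a $\tau$-perpendicular subcategory of $\module\Lambda$; by Theorem~\ref{prop:counting}(c) it has rank $j$; and by definition of a (signed) $\tau$-exceptional sequence, $(C_1,\dots,C_{j-1})$ is complete in $\W_{j-1}$, so that the absolute-value sequence $(\abs{C_1},\dots,\abs{C_{j-1}})$ is a complete (unsigned) $\tau$-exceptional sequence in $\W_{j-1}$. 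The crucial point is that $\W_j$ depends only on $\abs{C_{j+1}},\dots,\abs{C_m}$. Now let $(A_i)$ and $(B_i)$ be as in the theorem, with $\abs{A_i}=\abs{B_i}$ for all $i\neq t$. Then $\W_j^A=\W_j^B$ for all $j\ge t$; call the common rank-$t$ subcategory $\W$. As $\abs{A_t}$ and $\abs{B_t}$ are $\tau$-rigid modules in $\W$ (if $A_t=P[1]$ then $\abs{A_t}=P$ is relatively projective, hence $\tau$-rigid, in $\W$), the common sequence $(\abs{A_1},\dots,\abs{A_{t-1}})$ is a complete (unsigned) $\tau$-exceptional sequence in \emph{both} $\W_{t-1}^A=J_\W(\abs{A_t})$ and $\W_{t-1}^B=J_\W(\abs{B_t})$.

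So it suffices to prove two statements. \emph{(Lemma A) A complete $\tau$-exceptional sequence is complete in only one $\tau$-perpendicular subcategory of $\module\Lambda$.} I would prove this by induction on the length $\ell$, the case $\ell=0$ being trivial. If $(C_1,\dots,C_\ell)$ is complete in $\U_1$ and in $\U_2$, the inductive hypothesis applied to the prefix gives $J_{\U_1}(C_\ell)=J_{\U_2}(C_\ell)=:\V$, of rank $\ell-1$. Then $\U_1\cap\U_2$ is a wide subcategory, hence --- since $\Lambda$ is $\tau$-tilting finite, every wide subcategory of $\module\Lambda$ is $\tau$-perpendicular --- a $\tau$-perpendicular subcategory; it contains $\V$ and $C_\ell\notin\V$, so by Lemma~\ref{lem:widewide} and the fact that rank strictly increases under proper inclusions of $\tau$-perpendicular subcategories it has rank at least $\ell$, and, being contained in $\U_1$, exactly $\ell$. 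Hence $\U_1\cap\U_2=\U_1=\U_2$. Applied to $(\abs{A_1},\dots,\abs{A_{t-1}})$, Lemma~A gives $J_\W(\abs{A_t})=J_\W(\abs{B_t})$.

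The second statement, which I expect to be the main obstacle, is: \emph{(Lemma B) if $N,N'$ are indecomposable $\tau$-rigid modules in a $\tau$-perpendicular subcategory $\W$ with $J_\W(N)=J_\W(N')$, then $N\cong N'$.} Granting it, $\abs{A_t}\cong\abs{B_t}$, which is part (b); and in part (a) both $A_t,B_t$ are modules, so $A_t=\abs{A_t}\cong\abs{B_t}=B_t$. Note Lemma~B cannot be upgraded to injectivity of $U\mapsto J_\W(U)$ on all rank-$1$ support $\tau$-rigid objects, since $J_\W(P)=J_\W(P[1])$ for relatively projective $P$; the content is that this is the only coincidence, and that it does not occur among genuine modules. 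I would attempt Lemma~B either by reading off an invariant of $N$ from the subcategory $J_\W(N)$ --- for instance recovering (the ray of) the $g$-vector of $N$ from $K_0(J_\W(N))\subseteq K_0(\W)$, using that distinct indecomposable $\tau$-rigid modules have distinct $g$-vectors (Adachi--Iyama--Reiten) --- or by a direct $\Gen$-closure argument via Lemma~\ref{lem:extcond}. The $\tau$-tilting-finiteness hypothesis is used essentially only in Lemma~A (every wide subcategory being $\tau$-perpendicular), which is consistent with the general case being stated as a conjecture.
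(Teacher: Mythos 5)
Your overall route coincides with the paper's: you reduce (b) to (a) by passing to absolute values (the paper's Lemma~\ref{lem:abs}); you use the flag of $\tau$-perpendicular subcategories determined by the tail of the sequence to reduce everything to uniqueness of the last term inside the common subcategory $\W$ (this is exactly the paper's proof of Theorem~\ref{thm:unique}(a)); and your Lemma~A is the paper's Lemma~\ref{lem:uniqueW}. Your induction-plus-intersection argument for Lemma~A is a minor variant of the paper's non-inductive proof (which uses Lemma~\ref{lem:inc} and the same rank count on $\W\cap\W'$). One caveat: the ``fact'' you invoke, that rank strictly increases under proper inclusion of wide subcategories, is not free --- it is the paper's Lemma~\ref{lem:subcategoryrank}, proved from Theorem~\ref{allwidejasso} and Theorem~\ref{prop:counting} --- but it is provable exactly along the lines you indicate, so this is a presentational omission rather than a flaw. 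Your remark that the $P$ versus $P[1]$ ambiguity is the only possible coincidence, and does not arise among modules, is also consistent with how the paper handles the signed case.

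The genuine gap is your Lemma~B: that indecomposable $\tau$-rigid modules $N,N'$ in $\W$ with $J_\W(N)=J_\W(N')$ must satisfy $N\cong N'$. You correctly identify this as the crux, but you do not prove it; you only propose two strategies. This is precisely the step the paper itself does not reprove but closes by citing \cite[Prop.~10.7]{bm-wide} in the proof of Lemma~\ref{lem:uniqueA}, after Lemma~\ref{lem:uniqueW} has produced $J(A_n)=J(A'_n)$. Neither of your sketched routes is carried out: the $g$-vector idea would require recovering (at least the ray of) the $g$-vector of $N$ from the subcategory $J_\W(N)$ alone, and it is not clear how to do that without already knowing that $J_\W(-)$ determines $N$, which is the statement to be proved; the ``direct $\Gen$-closure argument'' via Lemma~\ref{lem:extcond} is not even outlined. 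So, as a self-contained argument, your proposal is incomplete at exactly the one point that carries content beyond rank bookkeeping; if citing the literature is allowed, inserting \cite[Prop.~10.7]{bm-wide} there makes your argument essentially identical to the paper's proof.
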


We first recall the following:
 
\begin{theorem} \label{allwidejasso}
	Let $\Lambda$ be a $\tau$-tilting finite algebra. Then the following hold
	for any wide subcategory $\W$ of $\module \Lambda$.
\begin{itemize}
	\item[(a)]\cite[Theorem 4.18]{DIRRT}
 We have $\W = J(U)$ for some support $\tau$-rigid object $U$ in $\C(\module \Lambda)$.
\item[(b)] \cite[Thm. 3.8, Thm. 3.16]{jasso} The wide subcategory $\W$ is $\tau$-tilting finite.
\end{itemize}
\end{theorem}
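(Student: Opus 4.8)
The statement collects two results recalled from the literature, so the task is to assemble the cited ingredients; I describe the conceptual route and a self-contained variant within the framework of this paper.

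For (a), the natural approach is via torsion classes. Since $\Lambda$ is $\tau$-tilting finite, every torsion class in $\module\Lambda$ is functorially finite (a fact underlying \cite{DIRRT}), and by the Adachi--Iyama--Reiten bijection \cite{air} each such torsion class has the form $\Gen M$ for a support $\tau$-tilting object $M \oplus P[1]$. To a torsion class $\T$ one associates, following Ingalls and Thomas, the wide subcategory $\alpha(\T)$ consisting of the $X \in \T$ such that $\ker g \in \T$ for every morphism $g \colon Y \to X$ with $Y \in \T$; the assignment $\T \mapsto \alpha(\T)$ is surjective onto the set of wide subcategories, with section $\W \mapsto T(\W)$, the smallest torsion class containing $\W$. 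The heart of the argument is then the identification $\alpha(\Gen M) = J(U)$ for the support $\tau$-rigid object $U = M \oplus P[1]$: I would verify this directly from the definition $J(U) = (M \oplus P)^{\perp} \cap {}^{\perp}(\tau M)$, using the characterisation of $\tau$-rigidity in Lemma \ref{lem:extcond} to match the one-sided kernel-closure description of $\alpha(\Gen M)$ with the two-sided Hom-perpendicular description of $J(U)$. Granting this, (a) is immediate: an arbitrary wide $\W$ equals $\alpha(\T)$ with $\T$ functorially finite, hence $\W = J(U)$.

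A self-contained alternative proceeds by induction on $\rk\Lambda$, using the machinery of Section \ref{sec:background}. If $\W = \module\Lambda$, take $U = 0$. Otherwise one first produces a nonzero support $\tau$-rigid $U_1$ with $\W \subseteq J(U_1)$ and $\rk J(U_1) < \rk\Lambda$; this is the step where $\tau$-tilting finiteness is essential, and is in effect the content of \cite[Thm.~4.18]{DIRRT}. By Lemma \ref{lem:widewide}, $\W$ is then a wide subcategory of $J(U_1) \simeq \module\Lambda_{U_1}$, which is again $\tau$-tilting finite (by the counting argument for (b) below, applied to the given $\tau$-perpendicular category $J(U_1)$, so that there is no circularity). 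The inductive hypothesis gives $\W = J_{J(U_1)}(V)$ for a support $\tau$-rigid object $V$ in $\C(J(U_1))$; writing $V = \E_{U_1}(W)$ with $W = \F_{U_1}(V)$ and applying Theorem \ref{thm:formulas}(a) to $U_1 \oplus W$ rewrites this as $\W = J(U_1 \oplus W)$, completing the induction.

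For (b) I would argue by counting, noting that the argument needs only that $\W$ is explicitly presented as $J(U)$, so it applies both to the categories $J(U_1)$ arising in the induction above and, combined with (a), to an arbitrary wide subcategory. By Theorem \ref{bijection} (with $\W = \module\Lambda$), the map $\F_{U}$ is a bijection from the indecomposable support $\tau$-rigid objects of $\C(J(U))$ to the indecomposable summands $X$ of support $\tau$-rigid objects $X \oplus U$ of $\C(\module\Lambda)$ that are not summands of $U$. As $\Lambda$ is $\tau$-tilting finite, there are only finitely many indecomposable $\tau$-rigid $\Lambda$-modules and finitely many indecomposable projectives, so the target of $\F_U$ is finite; hence $\C(J(U))$ has only finitely many indecomposable support $\tau$-rigid objects, and in particular $J(U) \simeq \module\Lambda_U$ has only finitely many indecomposable $\tau$-rigid modules. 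Thus $\W$ is $\tau$-tilting finite. The main obstacle is the single nontrivial step flagged above: in the torsion-class route, the explicit equality $\alpha(\Gen M) = J(U)$, and equivalently in the inductive route, the production of a proper $\tau$-perpendicular category $J(U_1)$ containing a given proper wide subcategory $\W$. Everything else is bookkeeping with the bijections of Section \ref{sec:background} together with the finiteness hypothesis; the genuine input, supplied by \cite[Thm.~4.18]{DIRRT}, is that the finiteness of $\tau$-rigid objects forces every wide subcategory to be reachable as a $\tau$-perpendicular category.
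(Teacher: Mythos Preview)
The paper gives no proof of this theorem: it is stated purely as a recall from the literature, with part (a) attributed to \cite[Thm.~4.18]{DIRRT} and part (b) to \cite[Thms.~3.8, 3.16]{jasso}. Your proposal therefore goes well beyond what the paper does, supplying sketches of the underlying arguments rather than just the citations.

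Your torsion-class route for (a) is indeed the approach taken in \cite{DIRRT}: $\tau$-tilting finiteness forces every torsion class to be functorially finite (this is \cite{DIJ}, restated in the paper as Proposition~\ref{prop:torsionfinite}), hence of the form $\Gen M$ for a support $\tau$-tilting object, and the identification $\alpha(\Gen M)=J(U)$ is the remaining computation. Your ``self-contained alternative'' is honestly labelled: the step producing a nonzero $U_1$ with $\W\subseteq J(U_1)$ is precisely the substance of \cite[Thm.~4.18]{DIRRT}, so the induction does not avoid the cited input, it only reorganises it. One small technical point: the bijection $\E_{U_1}$ in Theorem~\ref{bijection} is stated for indecomposable objects, so to write $V=\E_{U_1}(W)$ for a general support $\tau$-rigid $V$ you are implicitly using the additive extension and the fact that it lands in support $\tau$-rigid objects of $\C(\module\Lambda)$; this is true, but worth making explicit before invoking Theorem~\ref{thm:formulas}(a).

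Your argument for (b) via the bijection of Theorem~\ref{bijection} is correct and is exactly the mechanism behind the cited results of Jasso (indeed the paper describes Theorem~\ref{bijection} as a refinement of \cite[Thm.~3.16]{jasso}). The paper itself later uses the same fact in the form \cite[Prop.~4.2(b)]{bm-wide} within the alternative proof of Lemma~\ref{lem:subcategoryrank}(a), so your counting argument is entirely in the intended spirit.
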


We next make the following observation, which holds for all finite-dimensional algebras:

\begin{lemma}\label{lem:abs}
If $(A_1, A_2, \dots, A_n)$ is a complete
 signed $\tau$-exceptional 
	sequence, then $(\abs{A_1}, \abs{A_2}, \dots, \abs{A_n})$ is a complete (unsigned) $\tau$-exceptional sequence.
\end{lemma}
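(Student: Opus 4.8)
The plan is to induct on $n$ (equivalently, on the rank of the ambient category), directly unwinding the recursive definitions of signed and unsigned complete $\tau$-exceptional sequence.

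First I would dispose of the base case $n=1$. A complete signed $\tau$-exceptional sequence $(A_1)$ has $A_1$ either a $\tau$-rigid module, in which case $\abs{A_1}=A_1$, or of the form $P[1]$ with $P$ projective, in which case $\abs{A_1}=P$; since $\tau P = 0$ for $P$ projective, $\abs{A_1}$ is $\tau$-rigid in either case, so $(\abs{A_1})$ is a complete $\tau$-exceptional sequence.

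For the inductive step I would assume the statement for all finite-dimensional algebras with fewer than $n$ simple modules and let $(A_1,\dots,A_n)$ be a complete signed $\tau$-exceptional sequence in $\C(\module\Lambda)$, with $\Lambda$ having $n$ simples. By clause (i) of the definition, $A_n$ is a $\tau$-rigid module or $P[1]$ with $P$ projective, and exactly as in the base case $\abs{A_n}$ is $\tau$-rigid in $\module\Lambda$. Put $\W=J(\abs{A_n})$. By Theorem~\ref{prop:counting}(c), $\W$ is equivalent to $\module\Lambda_{A_n}$ with $\rk\Lambda_{A_n}=n-1$, using that $A_n$ is indecomposable. By clause (ii), $(A_1,\dots,A_{n-1})$ is a signed $\tau$-exceptional sequence in $\W$; it has $n-1=\rk\W$ terms, hence is complete in $\W$. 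Applying the induction hypothesis inside the module category $\W$ gives that $(\abs{A_1},\dots,\abs{A_{n-1}})$ is a complete $\tau$-exceptional sequence in $\W=J(\abs{A_n})$. Since $\abs{A_n}$ is a $\tau$-rigid indecomposable $\Lambda$-module and $(\abs{A_1},\dots,\abs{A_{n-1}})$ is a $\tau$-exceptional sequence in $J(\abs{A_n})$, the definition of complete $\tau$-exceptional sequence yields that $(\abs{A_1},\dots,\abs{A_n})$ is one in $\module\Lambda$.

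I do not expect a genuine obstacle: the argument is a bookkeeping induction. The only points one must keep straight are that the perpendicular category $J(\abs{A_n})$ appearing in clause (ii) depends only on $\abs{A_n}$, so the signed and unsigned recursions sit over the same base, and that a (relatively) projective object, having zero Auslander--Reiten translate in the relevant category, is automatically $\tau$-rigid there.
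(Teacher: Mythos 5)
Your proof is correct and is essentially the paper's own argument: both rest on exactly the same two observations, namely that a (relatively) projective object is automatically $\tau$-rigid and that the $\tau$-perpendicular category in the recursive definition depends only on $\abs{A_n}$ (i.e.\ $J(P[1])=J(P)$ for $P$ projective). The only difference is presentational: your induction on the number of simples, descending into $J(\abs{A_n})$, formalizes what the paper does by unsigning the terms one at a time ``and so on''.
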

 
\begin{proof}
We first claim that $(A_1, A_2, \dots, A_{n-1}, \abs{A_n})$
is a signed $\tau$-exceptional sequence.
If $\abs{A_n}$ is projective, then $J(\abs{A_n}[1]) = J(\abs{A_n})$. Hence the initial claim follows from the definition of signed $\tau$-exceptional sequences.
The same argument gives that also 
$(A_1, A_2, \dots, A_{n-2}, \abs{A_{n-1}}, \abs{A_n})$
is a $\tau$-exceptional sequence, and so on.
\end{proof}

It is clear that Lemma~\ref{lem:abs} and Theorem~\ref{thm:unique} (a), imply Theorem~\ref{thm:unique} (b), so it is enough to prove Theorem~\ref{thm:unique} (a).

In the remainder of this section, we will prove Theorem \ref{thm:unique} (a). So, we assume for the remainder of the section that $\Lambda$ is $\tau$-tilting finite.

We then have the following:

\begin{lemma} \label{lem:subcategoryrank}
Let $\W$, $\W'$ be wide subcategories of $\module \Lambda$ with $\W' \subseteq \W$. Then: 
\begin{itemize}
    \item [(a)] We have $\rk \W'\leq \rk \W$;
    \item [(b)] If $\rk \W=\rk \W'$ then $\W=\W'$.
\end{itemize}
\end{lemma}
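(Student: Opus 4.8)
The plan is to prove part~(a) first, since part~(b) will follow easily once we understand the mechanism by which a containment of wide subcategories is witnessed by a $\tau$-perpendicular construction. First I would invoke Theorem~\ref{allwidejasso}(a): since $\Lambda$ is $\tau$-tilting finite, the larger wide subcategory $\W$ is of the form $\W = J(U)$ for some support $\tau$-rigid object $U$ in $\C(\module \Lambda)$, and by Theorem~\ref{prop:counting}(c) it is equivalent to a module category $\module \Lambda_U$ with $\rk \W = \rk \Lambda_U = \rk \Lambda - \rk U$. Now $\W'$ is a wide subcategory of $\module \Lambda$ contained in $\W$, so by Lemma~\ref{lem:widewide} it is a wide subcategory of $\W$. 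Transporting along the equivalence $\W \simeq \module \Lambda_U$, we get that $\W'$ corresponds to a wide subcategory of $\module \Lambda_U$. The key point is that $\Lambda_U$ is again $\tau$-tilting finite: this is exactly Theorem~\ref{allwidejasso}(b) applied to $\W$ (or one applies Theorem~\ref{allwidejasso}(a) inside $\module \Lambda_U$ directly). Hence $\W'$, viewed inside $\module \Lambda_U$, equals $J_{\W}(V)$ for some support $\tau$-rigid object $V$ in $\C(\W)$, and Theorem~\ref{prop:counting}(c) applied within $\module \Lambda_U$ gives $\rk \W' = \rk \Lambda_U - \rk V \leq \rk \Lambda_U = \rk \W$. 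This proves~(a).

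For part~(b), I would run the same construction but track the equality case. If $\rk \W' = \rk \W$, then in the displayed chain of equalities and inequalities above we must have $\rk V = 0$, i.e.\ $V = 0$. But $J_{\W}(0)$ (the $\tau$-perpendicular subcategory of the zero object inside $\W$) is all of $\W$ itself, since the defining conditions $\Hom(0, Y) = 0$ and $\Hom(Y, \tau_{\W} 0) = 0$ are vacuous. Hence $\W' = J_{\W}(V) = \W$. Alternatively, and perhaps more cleanly, one can phrase~(b) as: any full, rank-preserving embedding of module categories coming from a $\tau$-perpendicular construction is surjective — but I think the $V = 0$ argument is the most direct.

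One subtlety I would be careful about is the precise statement of Theorem~\ref{allwidejasso}(a) that I am using inside $\module \Lambda_U$: the theorem as stated is about wide subcategories of $\module \Lambda$, and I am applying it to the $\tau$-tilting finite algebra $\Lambda_U$ and the wide subcategory of $\module \Lambda_U$ corresponding to $\W'$. This is legitimate because $\Lambda_U$ is itself a finite-dimensional ($\tau$-tilting finite) algebra, so Theorem~\ref{allwidejasso} applies verbatim to it. I should also make sure that the notion of $\rk$ for a wide subcategory is well defined and compatible with passing to $\W$ as an ambient category — i.e.\ that $\rk_{\module \Lambda} \W'$ (computed as $\rk$ of the algebra to which $\W'$ is equivalent) agrees with $\rk_{\W} \W'$ computed after identifying $\W \simeq \module \Lambda_U$ — but this is immediate since both are just the number of simples of the abstract algebra to which $\W'$ is equivalent, independent of the ambient category.

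The main obstacle, such as it is, is bookkeeping rather than conceptual: one must consistently transport the $\tau$-perpendicular machinery (Theorems~\ref{prop:counting}, \ref{allwidejasso}) from $\module \Lambda$ into the module category $\module \Lambda_U \simeq \W$ and verify that ``wide subcategory of $\W$'' in the sense of Lemma~\ref{lem:widewide} matches ``wide subcategory of $\module \Lambda_U$'' under the equivalence. Once that identification is in place, the rank count and the $V = 0$ argument are routine.
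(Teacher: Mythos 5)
Your proposal is correct and follows essentially the same route as the paper's proof: view $\W'$ as a wide subcategory of $\W$ via Lemma~\ref{lem:widewide}, apply Theorem~\ref{allwidejasso} (using $\tau$-tilting finiteness of $\W$) to write $\W' = J_{\W}(V)$, count ranks via Theorem~\ref{prop:counting}, and conclude $\W'=\W$ in the equality case from $V=0$. The extra bookkeeping you include about transporting along $\W \simeq \module\Lambda_U$ is left implicit in the paper but is the same argument.
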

\begin{proof}
For (a), we note that, by Lemma~\ref{lem:widewide}, $\W'$ is a wide subcategory of $\W$. By Theorem~\ref{allwidejasso}, $\W'$ is of the form $J_{\W}(U)$ for some $\tau$-rigid object $U$ in $\W$. Hence, by Theorem~\ref{prop:counting},
$\rk \W'=\rk \W-r\leq \rk \W$, where $r$ is the number of non-isomorphic indecomposable direct summands of $U$.

For (b) suppose, in addition, that $\rk\W=\rk\W'$. Then $r=0$ in the above, so $U=0$ and we have $\W=\W'$ as required.
\end{proof}

We give an alternative proof of (a) at the end of this section.

\begin{lemma}\label{lem:uniqueW}
Let $\W$ and $\W'$ be wide subcategories of $\module \Lambda$, and assume \sloppy  that
$(A_1, A_2, \dots, A_m)$ is a complete  $\tau$-exceptional sequence in both 
$\W$ and $\W'$. Then we  have $\W = \W'$. 
\end{lemma}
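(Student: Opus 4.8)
The plan is to argue by induction on $m$, using throughout that $\Lambda$ is $\tau$-tilting finite (the standing assumption of this section): by Theorem~\ref{allwidejasso}(a) every wide subcategory of $\module\Lambda$ is then $\tau$-perpendicular, hence equivalent to a module category and equipped with a well-defined rank, so that Lemma~\ref{lem:subcategoryrank} applies to any pair of nested wide subcategories. The base case $m=0$ is trivial, a rank-$0$ wide subcategory being the zero category; as a warm-up one can also take $m=1$ directly, where $\W\cap\W'$ is a nonzero wide subcategory of $\module\Lambda$ contained in $\W$, so $\rk(\W\cap\W')=1=\rk\W$ and hence $\W\cap\W'=\W$ by Lemma~\ref{lem:subcategoryrank}(b), and symmetrically $\W\cap\W'=\W'$, giving $\W=\W'$.

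For the inductive step I would unwind the definition of a complete $\tau$-exceptional sequence. From the hypothesis, $A_m$ is $\tau$-rigid in both $\W$ and $\W'$, all the terms $A_1,\dots,A_m$ lie in $\W\cap\W'$, and $(A_1,\dots,A_{m-1})$ is a complete $\tau$-exceptional sequence simultaneously in $J_{\W}(A_m)$ and in $J_{\W'}(A_m)$. First I would observe that each of these two $\tau$-perpendicular categories, being wide in $\W$ (respectively $\W'$), is wide in $\module\Lambda$: wideness is transitive, since kernels, cokernels and extensions computed inside a wide subcategory agree with those computed in $\module\Lambda$. The induction hypothesis then applies to the length-$(m-1)$ sequence and yields $J_{\W}(A_m)=J_{\W'}(A_m)$; write $\W_0$ for this common wide subcategory. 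It has rank $m-1$ — because it carries a complete $\tau$-exceptional sequence of that length, or directly from Theorem~\ref{prop:counting}(c) applied to the module category $\W$ and the indecomposable $\tau$-rigid object $A_m$ — and $A_m\notin\W_0$, since $\Hom(A_m,A_m)\neq 0$.

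The last step is to recover $\W$ from $\W_0$ and $A_m$ by a rank count. I would consider $\W\cap\W'$, a wide subcategory of $\module\Lambda$ that contains $\W_0$ (as $J_{\W}(A_m)\subseteq\W$ and $J_{\W'}(A_m)\subseteq\W'$) and also contains the module $A_m$, so that $\W_0\subsetneq\W\cap\W'\subseteq\W$. By Lemma~\ref{lem:subcategoryrank}(a), $\rk(\W\cap\W')\le\rk\W=m$; and $\rk(\W\cap\W')$ cannot equal $\rk\W_0=m-1$, for then Lemma~\ref{lem:subcategoryrank}(b) would force $\W\cap\W'=\W_0$, contradicting $A_m\in(\W\cap\W')\setminus\W_0$. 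Hence $\rk(\W\cap\W')=m=\rk\W$, so $\W\cap\W'=\W$ by Lemma~\ref{lem:subcategoryrank}(b), i.e.\ $\W\subseteq\W'$; by symmetry $\W'\subseteq\W$, and therefore $\W=\W'$. I expect the only real care needed is the bookkeeping that $J_{\W}(A_m)$, $J_{\W'}(A_m)$ and $\W\cap\W'$ are all genuinely wide subcategories of $\module\Lambda$, so that the inductive hypothesis and Lemma~\ref{lem:subcategoryrank} are legitimately applicable; the conceptual point being exploited is that, in the $\tau$-tilting finite setting, a wide subcategory is pinned down by a single $\tau$-rigid object it contains together with the associated $\tau$-perpendicular subcategory, which the rank inequalities make precise.
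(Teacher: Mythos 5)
Your proof is correct, but it follows a genuinely different route from the paper's. The paper argues directly, without induction, and uses only the last term of the sequence: since $\W\cap\W'\subseteq\W$, Lemma~\ref{lem:inc}(a) gives that $A_m$ is $\tau$-rigid in $\W\cap\W'$ and Lemma~\ref{lem:inc}(b) gives $J_{\W}(A_m)\subseteq J_{\W\cap\W'}(A_m)$; comparing ranks ($\rk J_{\W}(A_m)=m-1$ while $\rk J_{\W\cap\W'}(A_m)=\rk(\W\cap\W')-1\leq m-1$) forces $\rk(\W\cap\W')=m$, and Lemma~\ref{lem:subcategoryrank}(b) then yields $\W=\W\cap\W'=\W'$. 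You instead induct on $m$: the inductive hypothesis identifies $J_{\W}(A_m)=J_{\W'}(A_m)=:\W_0$, and you force $\rk(\W\cap\W')=m$ from the strict inclusion $\W_0\subsetneq\W\cap\W'$ (strict because $A_m\in\W\cap\W'$ but $\Hom(A_m,A_m)\neq 0$ excludes $A_m$ from $\W_0$). Both arguments pivot on the same rank count for $\W\cap\W'$ via Lemma~\ref{lem:subcategoryrank}, but yours dispenses with Lemma~\ref{lem:inc} entirely, at the cost of two auxiliary facts the paper never states: that a wide subcategory of a wide subcategory of $\module\Lambda$ is wide in $\module\Lambda$ (your transitivity claim is correct as justified, and in the $\tau$-tilting finite setting one could alternatively get it from Theorem~\ref{allwidejasso}(a) together with Theorem~\ref{thm:formulas}(a), which exhibits $J_{\W}(A_m)$ as a $\tau$-perpendicular subcategory of $\module\Lambda$), and that an intersection of wide subcategories is wide (which the paper's proof also uses implicitly). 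The paper's version is shorter and isolates exactly what is needed about $A_m$; your inductive version is slightly longer but makes transparent that the whole flag of $\tau$-perpendicular subcategories attached to the sequence is determined term by term, which is in the spirit of how Theorem~\ref{thm:unique}(a) is later deduced.
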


\begin{proof}
By assumption, $\rk \W = m = \rk \W'$ so, by Lemma~\ref{lem:subcategoryrank} (a), we have $\rk (\W \cap \W') \leq m$.	
By Lemma \ref{lem:inc} (a), we have that $A_m$ is $\tau$-rigid in $\W\cap \W'$.
We then have $\rk J_{\W} (A_m) = m-1$ and $\rk J_{\W \cap \W'} (A_m) \leq m-1$.
By Lemma \ref{lem:inc} (b), we have that $J_{\W}(A_m) \subseteq J_{\W \cap \W'}(A_m)$.
Hence we must have $\rk J_{W}(A_m) = \rk  J_{\W \cap \W'}(A_m)$, and it follows that 
$\rk \W = \rk J_{W}(A_m)  +1 = \rk  J_{\W \cap \W'}(A_m) +1 = \rk \W \cap \W'$.
By Lemma~\ref{lem:subcategoryrank} (b), we have
$\W = \W \cap \W' = \W'$.
\end{proof}	

\begin{lemma}\label{lem:uniqueA}
If $(A_1, A_2, \dots, A_{n-1}, A_n)$ and 
	$(A_1, A_2, \dots, A_{n-1}, A'_n)$ are \sloppy complete signed $\tau$-exceptional sequences
	in $\module \Lambda$, then $A_n = A'_n$.
\end{lemma}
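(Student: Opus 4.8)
The plan is to reduce, by way of Lemmas~\ref{lem:abs},~\ref{lem:uniqueW} and~\ref{lem:subcategoryrank}, to the statement that a $\tau$-perpendicular subcategory of corank one in $\module\Lambda$ is ``completed'' by a unique indecomposable $\tau$-rigid module, and then to prove that via the torsion-theoretic description of $\tau$-perpendicular subcategories.

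First I would pass to the underlying unsigned sequences. By Lemma~\ref{lem:abs}, applied inside the module categories $J(\abs{A_n})$ and $J(\abs{A_n'})$, the truncation $(\abs{A_1},\ldots,\abs{A_{n-1}})$ --- which is literally the same sequence in the two cases, since $\abs{A_i}=\abs{A_i'}$ for $i<n$ --- is a complete (unsigned) $\tau$-exceptional sequence both in $J(\abs{A_n})$ and in $J(\abs{A_n'})$. Applying Lemma~\ref{lem:uniqueW} with $\W=J(\abs{A_n})$ and $\W'=J(\abs{A_n'})$ yields $J(\abs{A_n})=J(\abs{A_n'})=:\W$, a wide subcategory of rank $n-1$ by Theorem~\ref{prop:counting}(c). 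It then suffices to show that $\W$ determines $\abs{A_n}$. Since $J(M)$ has rank $n-1$ for every indecomposable $\tau$-rigid module $M$ (again by Theorem~\ref{prop:counting}(c)), and $\rk$ is strictly monotone along containments of $\tau$-perpendicular subcategories (Lemma~\ref{lem:subcategoryrank}), Lemma~\ref{lem:subcategoryrank}(b) shows that $J(M)=\W$ is equivalent to $\W\subseteq J(M)$; so the task becomes to prove that there is exactly one indecomposable $\tau$-rigid module $M$ with $\W\subseteq J(M)$.

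Unwinding the definition of $J$, the condition $\W\subseteq J(M)$ says precisely that $M\in{}^{\perp}\W$ and $\tau M\in\W^{\perp}$. Existence follows from Theorem~\ref{allwidejasso}(a): write $\W=J(U)$ with $U$ a support $\tau$-rigid object; as $\rk U=1$, $U$ is indecomposable, and we take $M=U$ if $U$ is a module, or $M=Q$ if $U=Q[1]$ with $Q$ projective, the latter being legitimate since $\tau Q=0$ forces $J(Q)=Q^{\perp}=J(Q[1])=\W$. For uniqueness I would argue via torsion classes: for a $\tau$-rigid module $M$ one has $M^{\perp}=(\Gen M)^{\perp}$, and hence $J(M)={}^{\perp}(\tau M)\cap(\Gen M)^{\perp}$, where $\Gen M$ is the smallest functorially finite torsion class containing $M$ and ${}^{\perp}(\tau M)$ is the Bongartz torsion class of $M$, i.e.\ the largest functorially finite torsion class in which $M$ is $\Ext^{1}$-projective. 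Since $\Lambda$ is $\tau$-tilting finite all torsion classes in sight are functorially finite, and the point is that $\W$ recovers the interval $[\Gen M,\ {}^{\perp}(\tau M)]$ in the lattice of torsion classes and that $M$ is the unique indecomposable module which is a direct summand of \emph{every} support $\tau$-tilting module $T$ with $\Gen M\subseteq\Gen T\subseteq{}^{\perp}(\tau M)$. This pins $M$ down, so $\abs{A_n}=M=\abs{A_n'}$; since for an unsigned $\tau$-exceptional sequence the last term is a module, this is exactly $A_n=A_n'$ in that case.

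The main obstacle I anticipate is this last uniqueness step: showing that a corank-one wide subcategory $\W$ remembers the torsion-class interval $[\Gen M,\ {}^{\perp}(\tau M)]$, and hence $M$. This is in essence the injectivity half of the correspondence between functorially finite wide subcategories and suitable intervals of torsion classes in the circle of ideas around~\cite{DIRRT}; the $\tau$-tilting-finiteness hypothesis (through Theorem~\ref{allwidejasso}) is what makes all the relevant torsion classes and Jasso-type reductions available. One must also keep track of the (harmless) ambiguity between a projective module $Q$ and the object $Q[1]$: these produce the same $\W$ and the same underlying module, which is why the invariant genuinely determined by $\W$, and hence by the truncated sequence, is $\abs{A_n}$.
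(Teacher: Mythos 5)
Your reduction is exactly the one the paper uses: by Lemma~\ref{lem:abs} the common prefix gives a complete unsigned $\tau$-exceptional sequence in both $J(\abs{A_n})$ and $J(\abs{A_n'})$, Lemma~\ref{lem:uniqueW} then yields $J(\abs{A_n})=J(\abs{A_n'})=:\W$, and the lemma comes down to showing that an indecomposable $\tau$-rigid module $M$ is determined (up to the $Q$ versus $Q[1]$ ambiguity) by its $\tau$-perpendicular category $J(M)$. The paper closes precisely this step by citing \cite[Prop.\ 10.7]{bm-wide}; you attempt to prove it instead, and that is where your argument has a genuine gap.

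Your proposed proof of the key step is: $\W$ recovers the torsion-class interval $[\Gen M,\ {}^{\perp}(\tau M)]$, and the interval recovers $M$. The second half is plausible (by the Jasso/Adachi--Iyama--Reiten reduction, the support $\tau$-tilting modules whose torsion classes lie in that interval are exactly those having $M$ as a summand, and for $n\geq 2$ no other indecomposable is common to all of them). But the first half --- that the corank-one wide subcategory $J(M)$ determines the interval --- is asserted, not proved, and it is essentially equivalent to the injectivity you are trying to establish: if $M\not\cong M'$ were indecomposable $\tau$-rigid modules with $J(M)=J(M')$, you would have two a priori different intervals with the same ``heart'', and in general distinct torsion-class intervals can share a heart, so some specific argument about these Bongartz-type intervals is required. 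You explicitly flag this as the main obstacle and appeal to ``the circle of ideas around \cite{DIRRT}'' without a concrete statement or proof, so the mathematical content of the lemma beyond Lemma~\ref{lem:uniqueW} --- exactly what \cite[Prop.\ 10.7]{bm-wide} supplies --- is missing. (A minor remark in your favour: concluding only $\abs{A_n}=\abs{A_n'}$ is the right level of precision, since when $\abs{A_n}$ is projective both signs do occur as last terms, and the unsigned conclusion is all that is used in the proof of Theorem~\ref{thm:unique}.)
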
	

\begin{proof}
Let $\W= J(A_n)$ and $\W' = J(A'_n)$. Then the statement follows  	
as a direct consequence of \cite[Proposition 10.7]{bm-wide} and Lemma \ref{lem:uniqueW}.
\end{proof}		

We can now complete the proof of Theorem \ref{thm:unique} (a), and hence the main theorem of this section.

\begin{proof}[Proof of Theorem \ref{thm:unique} (a)]
If $t= n$, this follows directly from Lemma \ref{lem:uniqueA}.
Assume $t \in \{1, \dots, n-1\}$. Let $W^A_n = J(A_n)$, and for $j \in \{t, \dots, n-1\}$ define
recursively $\W^A_j = J_{\W^A_{j+1}}(A_j)$.
Define similarly $\W^B_n = J(B_n)$ and $\W^B_j = J_{\W^B_{j+1}}(B_j)$.
Then $\W^A_{t+1} = \W^B_{t+1} \coloneqq  \W'$, and  
$\W' \simeq \module \Lambda'$ for a finite-dimensional algebra $\Lambda'$,
and we have that 
$(A_1, A_2, \dots, A_{t-1}, A_t)$ and $(B_1, B_2, \dots, B_{t-1}, B_t)
= (A_1, A_2, \dots, A_{t-1}, B_t)$ are complete exceptional sequences in $\W'$.
Hence, we obtain that 
$A_t = B_t$ by Lemma \ref{lem:uniqueA}.	
\end{proof}	

We note that the uniqueness property of Theorem \ref{thm:unique} also holds for arbitrary finite-dimensional hereditary algebras, 
by \cite[Lemma 2]{cb-exc}, and we conjecture that the assumption on $\tau$-tilting finiteness should not be 
necessary.

\begin{conjecture}\label{conj}
The statement of Theorem \ref{thm:unique} holds for all
finite-dimensional algebras.
\end{conjecture}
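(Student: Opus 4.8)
By Lemma~\ref{lem:abs} the signed statement of Theorem~\ref{thm:unique}(b) reduces to the unsigned statement (a) for an arbitrary finite-dimensional algebra, so it suffices to treat Theorem~\ref{thm:unique}(a) without the $\tau$-tilting finiteness hypothesis. The first point is that the reduction to the case $t=n$ performed at the end of the proof of Theorem~\ref{thm:unique}(a) does not use finiteness: from $A_j=B_j$ for all $j>t$ one obtains $\W^A_j=\W^B_j$ for all $j>t$ by descending induction (the base case is $\W^A_n=J(A_n)=J(B_n)=\W^B_n$, and $\W^A_j=J_{\W^A_{j+1}}(A_j)=J_{\W^B_{j+1}}(B_j)=\W^B_j$), and then $(A_1,\dots,A_{t-1},A_t)$ and $(A_1,\dots,A_{t-1},B_t)$ are complete $\tau$-exceptional sequences in the common $\tau$-perpendicular subcategory $\W^A_{t+1}=\W^B_{t+1}\simeq\module\Lambda'$, where $\Lambda'$ is again an arbitrary finite-dimensional algebra by Theorem~\ref{prop:counting}(c). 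Thus the whole conjecture follows once Lemma~\ref{lem:uniqueA} is established in full generality: the last term of a complete signed $\tau$-exceptional sequence is determined by the preceding terms.

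To prove Lemma~\ref{lem:uniqueA} for arbitrary $\Lambda$, the plan is to keep the two-step structure used in the $\tau$-tilting finite case: (i) show that the $\tau$-perpendicular subcategory $J(A_n)$ is determined by $(A_1,\dots,A_{n-1})$ --- a general form of Lemma~\ref{lem:uniqueW} --- and (ii) deduce $A_n\cong A'_n$ from $J(A_n)=J(A'_n)$. For step (ii) one needs \cite[Prop.~10.7]{bm-wide}, i.e.\ that an indecomposable support $\tau$-rigid object $X$ is recovered from the $\tau$-perpendicular subcategory $J(X)$; this is a corank-one statement, and I would first check, or reprove, it for arbitrary finite-dimensional algebras (a Bongartz-completion or mutation argument, in the spirit of how \cite{bh} extends \cite{bm-wide}, should suffice). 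The substance is therefore step (i).

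For step (i) the obstruction is clear: Theorem~\ref{allwidejasso}(a) fails in general, so the intersection $\W\cap\W'$ used in the proof of Lemma~\ref{lem:uniqueW} need not be $\tau$-perpendicular, and the rank-counting of Lemma~\ref{lem:subcategoryrank} together with Lemma~\ref{lem:inc} no longer applies to it. I see two routes around this. The first is to prove that a wide subcategory lying between two $\tau$-perpendicular subcategories that share a complete $\tau$-exceptional sequence is itself $\tau$-perpendicular --- equivalently, that the relevant meet in the poset of $\tau$-perpendicular subcategories exists and has the expected rank --- after which the proof of Lemma~\ref{lem:uniqueW} goes through verbatim. The second, which I think is cleaner, is to build the ambient category directly: define $\W(A_1,\dots,A_m)$ recursively as the $\tau$-perpendicular subcategory $\W$ with $A_m$ $\tau$-rigid in $\W$ and $J_\W(A_m)=\W(A_1,\dots,A_{m-1})$, and prove that such a $\W$ is \emph{unique} (existence being clear from any honest sequence). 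Uniqueness of this ``lift'' should be extractable from the bijection $\F$ of Theorem~\ref{bijection} together with the composition formula of Theorem~\ref{thm:formulas}(b), which let one reconstruct $\W$ from $\W(A_1,\dots,A_{m-1})$ and $A_m$; granting it, $J(A_n)=\W(A_1,\dots,A_{n-1})=J(A'_n)$ and step (i) follows.

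I expect this uniqueness-of-lift statement --- equivalently, the good behaviour of meets of $\tau$-perpendicular subcategories along a $\tau$-exceptional flag --- to be the main obstacle. Everything else is a routine adaptation of the arguments already given: the reduction to $t=n$ is a formal consequence of the recursive definition of the categories $\W^A_j$, and the passage from $J(A_n)=J(A'_n)$ to $A_n\cong A'_n$ rests only on the corank-one injectivity of $X\mapsto J(X)$, which is plausibly already known or within reach for general $\Lambda$.
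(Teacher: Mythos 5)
First, a point of orientation: the statement you are proving is Conjecture~\ref{conj}, which the paper itself leaves open --- there is no proof of it in the paper, and the authors explicitly state only that they \emph{conjecture} the $\tau$-tilting finiteness hypothesis in Theorem~\ref{thm:unique} can be dropped. So there is no ``paper proof'' to match, and the relevant question is whether your argument actually closes the conjecture. It does not: what you have written is a correct and sensible reduction plus a roadmap, but the two pivotal steps are explicitly deferred (``I would first check, or reprove, it'', ``should be extractable'', ``plausibly already known or within reach''), and they are precisely where the difficulty sits. The parts you do carry out are fine and do mirror the finite case: Lemma~\ref{lem:abs} reduces (b) to (a) with no finiteness assumption, and the reduction to $t=n$ via the recursively defined categories $\W^A_j$ uses only the $J_{\W}$-construction and Theorem~\ref{prop:counting}(c), so the conjecture is indeed equivalent to the general form of Lemma~\ref{lem:uniqueA}. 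You also correctly locate the obstruction in the paper's argument: Theorem~\ref{allwidejasso}(a) and Proposition~\ref{prop:torsionfinite} fail without $\tau$-tilting finiteness, so $\W\cap\W'$ need not be $\tau$-perpendicular (indeed need not be functorially finite, nor equivalent to a module category, so its ``rank'' is not even defined), and the counting in Lemmas~\ref{lem:subcategoryrank} and~\ref{lem:uniqueW} collapses.

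The gap is that neither of your two proposed routes around this is executed, and neither is routine. For step (i), the claim that a complete $\tau$-exceptional sequence determines its ambient $\tau$-perpendicular subcategory (your ``uniqueness of lift'', replacing Lemma~\ref{lem:uniqueW}) cannot be obtained just from Theorem~\ref{bijection} and Theorem~\ref{thm:formulas}(b): those results let you pass \emph{down} from a known ambient $\W$ and a $\tau$-rigid object in it, whereas what you need is that two different ambient categories $\W$, $\W'$ admitting the same sequence coincide --- exactly the statement whose only available proof in the paper runs through the lattice-theoretic input (DIRRT, Asai, DIJ) that is restricted to the $\tau$-tilting finite setting. Asserting that the meet of $\tau$-perpendicular subcategories ``along a $\tau$-exceptional flag'' behaves well is a restatement of the problem, not an argument. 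For step (ii), \cite[Prop.~10.7]{bm-wide} is proved in the $\tau$-tilting finite context, and your suggestion that a Bongartz-completion or mutation argument ``should suffice'' in general is again a conjecture within the conjecture. In short: your reductions show that the conjecture is equivalent to a corank-one uniqueness statement plus injectivity of $X\mapsto J(X)$ on indecomposable support $\tau$-rigid objects for arbitrary finite-dimensional algebras, which is a useful clarification, but no proof of either ingredient is given, so the conjecture remains open under your proposal just as it does in the paper.
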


An alternative proof of Lemma~\ref{lem:subcategoryrank} (a) can be given using the theory of bricks.
Recall that a $\Lambda$-module $M$ is called a \emph{brick} if $\End(M)$ is a division algebra. A set of isoclasses of pairwise Hom-orthogonal bricks is called a \emph{semibrick}.
Let $C$ be a full subcategory of $\module \Lambda$. We denote by $T(C)$ the smallest torsion class containing $C$, by $\Gen(C)$ the collection of modules obtained as quotients of finite direct sums of modules in $C$, and by $\Filt(C)$ the category of modules with filtrations by modules in $C$.
By the argument in~\cite[Lemma 3.1]{ms}, $T(C)=\Filt(\Fac(C))$.
A semibrick, $S$, is called \emph{left finite}~\cite[Definition 
1.2]{asai} if $T(S)$ is functorially finite.
Let $n_{\Lambda}$ denote the number of isomorphism classes of simple $\Lambda$ modules.
We recall:

\begin{proposition} \label{prop:bricksize} \cite[1.10]{asai}
If $S$ is a left finite semibrick, then $|S|\leq n_{\Lambda}$.
\end{proposition}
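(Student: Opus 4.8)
The plan is to recognise $|S|$ as the number of simple objects of a wide subcategory and then bound that number by $n_\Lambda$ using the $\tau$-tilting machinery recalled above. First I would invoke the correspondence between semibricks and wide subcategories: for any semibrick $S$, the subcategory $\mathcal W := \Filt(S)$ is a wide subcategory of $\module \Lambda$ whose simple objects are precisely the members of $S$. (This is classical, going back to Ringel's analysis of the category filtered by a set of pairwise $\Hom$-orthogonal bricks, and it shows that $|S|$ equals the number of simple objects of $\mathcal W$.) Thus, if $\mathcal W$ is equivalent to a module category $\module \Lambda'$, then $|S| = \rk \Lambda' = \rk \mathcal W$, and it remains to produce such an equivalence with $\rk \mathcal W \le n_\Lambda$.

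Second, I would use left-finiteness to pin down $\mathcal W$. Since $S$ is left finite, the torsion class $T(S) = \Filt(\Gen S)$ is functorially finite, hence by Adachi--Iyama--Reiten it equals $\Gen M$ for a basic support $\tau$-tilting object $M \oplus P[1]$ in $\C(\module\Lambda)$, with $\rk(M \oplus P[1]) = n_\Lambda$. One then identifies $\mathcal W = \Filt(S)$ with the canonical (Ingalls--Thomas) wide subcategory associated with the torsion class $T(S)$, and the crux is that this wide subcategory is $\tau$-perpendicular: $\mathcal W = J(U)$ for some support $\tau$-rigid object $U$. For $\tau$-tilting finite $\Lambda$ this is immediate from Theorem~\ref{allwidejasso}(a); for an arbitrary finite-dimensional algebra it follows from the lattice-theoretic and $\tau$-tilting-reduction results of Demonet, Iyama, Reading, Reiten and Thomas, of Marks and {\v S}tov{\'\i}{\v c}ek, and of Jasso (functorially finite torsion classes give functorially finite wide subcategories, and the relevant functorially finite wide subcategories are $\tau$-perpendicular). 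Granting this, Theorem~\ref{prop:counting}(c) gives $\rk \mathcal W = n_\Lambda - \rk U \le n_\Lambda$, whence $|S| = \rk \mathcal W \le n_\Lambda$.

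\textbf{The main obstacle} I anticipate is precisely the identification of $\mathcal W = \Filt(S)$ with a $\tau$-perpendicular subcategory $J(U)$ over a general finite-dimensional algebra; once that (together with the accompanying fact that $\mathcal W \simeq \module \Lambda'$) is in hand, everything else is bookkeeping. A more combinatorial alternative sidesteps wide subcategories altogether: invoke the bijection between left finite semibricks and functorially finite torsion classes, under which $S$ is recovered as the set of brick labels on the Hasse-quiver covers $\mathcal T' \lessdot T(S)$; since these covers correspond to the mutations of the support $\tau$-tilting object $M \oplus P[1]$, and exactly the $\rk M \le n_\Lambda$ mutations at summands of the $\tau$-rigid part $M$ decrease the torsion class, one gets $|S| = \rk M \le n_\Lambda$ directly. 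This second route is arguably quicker but presupposes the semibrick/torsion-class bijection, so which argument one regards as "the" proof depends on what one is prepared to take as known.
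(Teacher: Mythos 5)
The paper offers no proof of Proposition~\ref{prop:bricksize}: it is simply recalled from Asai \cite[1.10]{asai}, so the only meaningful comparison is with Asai's own argument. Your second route is essentially that argument: under Asai's bijection between left finite semibricks and functorially finite torsion classes (equivalently, by \cite{air}, support $\tau$-tilting pairs $M\oplus P[1]$), the semibrick $S$ is recovered from $M$ --- concretely as $M/\rad_{\End(M)}M$, or as the brick labels of the Hasse arrows directly below $T(S)$, of which there are exactly as many as indecomposable summands of the module part $M$ --- so $|S|\le \rk(M)\le n_\Lambda$; this is correct, granting the semibrick/torsion-class correspondence, which is precisely what \cite{asai} establishes. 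Your first route also reaches the right bound, but its crux carries an over-optimistic attribution: the identification of $\Filt(S)$ with a $\tau$-perpendicular subcategory $J(U)$ over an \emph{arbitrary} finite-dimensional algebra does not follow immediately from \cite{DIRRT}, \cite{ms} and \cite{jasso}; in that generality it is essentially the main result of \cite{bh} (left finite wide subcategories coincide with $\tau$-perpendicular ones), which itself builds on Asai's semibrick theory, so invoking it to prove Proposition~\ref{prop:bricksize} is both heavier than needed and uncomfortably close to circular. Note, finally, that for the only use made of the proposition in this paper --- the alternative proof of Lemma~\ref{lem:subcategoryrank}(a) --- the ambient category is a module category of a $\tau$-tilting finite algebra, so Theorem~\ref{allwidejasso}(a) applies directly and your first route then closes easily via Theorem~\ref{prop:counting}(c), giving $|S|=\rk\Filt(S)=n_\Lambda-\rk(U)\le n_\Lambda$.
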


\begin{proposition} \label{prop:torsionfinite} \cite[1.2]{DIJ}
Let $A$ be a $\tau$-tilting finite algebra, and let $T$ be a torsion class in $\module A$. Then $T$ is functorially finite.
\end{proposition}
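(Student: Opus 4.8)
The statement is part of the web of equivalences characterising $\tau$-tilting finiteness established in \cite{DIJ}, and the plan is to follow that circle of ideas. I would begin with an elementary observation: a torsion class $T$ in $\module A$ is automatically contravariantly finite, since for any module $X$ the inclusion $tX\hookrightarrow X$ of the torsion submodule is a right $T$-approximation --- the image of any morphism from an object of $T$ to $X$ is a quotient of that object, hence lies in $T$, and therefore lies in $tX$, the largest submodule of $X$ belonging to $T$. Thus only covariant finiteness of $T$ is really in question, and the most convenient reformulation of the goal is: \emph{$\mathrm{tors}\,A$ is a finite set and every torsion class is one of the finitely many functorially finite ones}. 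Indeed, by \cite{air} the functorially finite torsion classes are precisely the classes $\Gen M$ with $M$ a support $\tau$-tilting object, and there are only finitely many such when $A$ is $\tau$-tilting finite.

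For the main step I would invoke the brick-labelled Hasse quiver of $\mathrm{tors}\,A$ (Asai~\cite{asai}; see also \cite{DIRRT}). Each covering relation of $\mathrm{tors}\,A$ carries a brick label; a functorially finite torsion class $\Gen M$ has exactly one cover and one cocover per indecomposable summand of $M$, produced by Adachi--Iyama--Reiten mutation~\cite{air}, so only finitely many of each; and every torsion class $T$ is recovered as $T=\Filt(\Gen(\mathsf S(T)))=T(\mathsf S(T))$ from a canonical semibrick $\mathsf S(T)$ of ``minimal extending bricks'' read off from the arrows leaving $T$. Iterating mutation from $\module A$ and using that $A$ has only finitely many support $\tau$-tilting objects, one obtains finitely many functorially finite torsion classes, hence finitely many bricks arising as labels of covering relations among them. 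The decisive point is to show that these already exhaust $\mathrm{brick}\,A$; then every semibrick, in particular every $\mathsf S(T)$, is finite, so $\mathrm{tors}\,A$ is finite. One then closes the loop using this finiteness: an Ext-projective object $P$ of a torsion class $T$ satisfies $\Ext^1(P,\Gen P)=0$ (since $\Gen P\subseteq T$), hence is $\tau$-rigid by Lemma~\ref{lem:extcond}, and so lies among the finitely many indecomposable $\tau$-rigid $A$-modules; a minimal-counterexample argument in the finite lattice $\mathrm{tors}\,A$ shows that these Ext-projectives generate $T$, so with $M$ their direct sum one has $T=\Gen M$ with $M$ $\tau$-rigid, and $T$ is functorially finite by \cite{air}. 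An alternative, geometric route runs through the $g$-vector fan: $\tau$-tilting finiteness is equivalent to completeness of this fan, and a torsion class, being determined by its stability region, must have that region equal to a single maximal cone when the fan is complete, and so be functorially finite.

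The main obstacle is exactly the finiteness-propagation step --- passing from finitely many \emph{functorially finite} torsion classes (which $\tau$-tilting finiteness hands us at once via support $\tau$-tilting mutation) to finitely many torsion classes altogether, equivalently to $|\mathrm{brick}\,A|<\infty$. This cannot be obtained by a purely lattice-theoretic argument, because for algebras that are not $\tau$-tilting finite the functorially finite torsion classes need not be closed under covers and cocovers --- over the Kronecker algebra, for instance, a chain of functorially finite torsion classes can converge to one that is not functorially finite --- so the brick-labelling of $\mathrm{tors}\,A$, or the geometry of the $g$-vector fan, genuinely has to enter, as in \cite{DIJ}.
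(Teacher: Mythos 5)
In the paper this statement is not proved at all: it is quoted verbatim as \cite[Thm.~1.2]{DIJ}, so there is no internal argument to compare yours against; the only question is whether your sketch would stand on its own as a proof, and as written it does not. Your preliminary reductions are fine: the torsion submodule $tX\hookrightarrow X$ does give a right $T$-approximation, so only covariant finiteness is at issue; by \cite{air} the functorially finite torsion classes are exactly those of the form $\Gen M$ for $M$ a support $\tau$-tilting object, and $\tau$-tilting finiteness gives finitely many of these. But the entire content of the proposition is the implication you yourself label ``the decisive point'' and ``the main obstacle'': passing from finitely many \emph{functorially finite} torsion classes to the statement that \emph{every} torsion class is among them (equivalently, finiteness of the lattice of torsion classes, or brick-finiteness). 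For that step you only gesture at the brick-labelled Hasse quiver and at the $g$-vector fan, i.e.\ you invoke the machinery of \cite{DIJ} as a black box rather than proving anything; so the proposal is a roadmap to the known proof, not a proof.

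Moreover, the two places where you do indicate an argument for the hard step are not sound as stated. The ``minimal-counterexample argument in the finite lattice $\operatorname{tors}A$'' presupposes that $\operatorname{tors}A$ is finite, which is precisely what is being proved, so that part is circular; the actual engine in \cite{DIJ} is a climbing argument (if $U\subsetneq T$ with $U$ functorially finite, one produces a brick in $T\cap U^{\perp}$ and a strictly larger functorially finite torsion class $U'\subseteq T$, and $\tau$-tilting finiteness forces this chain to terminate at $T$), and it is this lemma --- not a counting of brick labels among the already-known functorially finite classes --- that has to be established. Similarly, the alternative geometric route asserts that ``a torsion class, being determined by its stability region, must have that region equal to a single maximal cone''; a general torsion class is not determined by, nor need it arise from, a stability region, so this would need a genuine argument (again supplied in \cite{DIJ} and its sequels) rather than an assertion. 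In short: correct easy reductions, correct identification of where the difficulty lies, but the key implication is cited or asserted rather than proved, so there is a genuine gap.
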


\begin{proof}[Proof of Lemma~\ref{lem:subcategoryrank} (a)]
Let $S$ be the set of isoclasses of simple objects in $\W'$. Then $S$ is a semibrick in $\W$. Note that, by~\cite[Thm. 3.8]{jasso}, $\W\simeq \module \Lambda'$ for some finite-dimensional algebra $\Lambda'$.
By \cite[Prop. 4.2(b)]{bm-wide}, $\Lambda'$ is $\tau$-tilting finite.
Hence, by Proposition~\ref{prop:torsionfinite}, $S$ is left-finite in $\W$.
By Proposition~\ref{prop:bricksize}, $\rk(\W')=|S|\leq \rk(\W)$.
\end{proof}

\section{Transposition}
\label{sec:transposition}
We now return to the general case, where $\Lambda$ is an arbitrary finite-dimensional algebra. 
A sequence $(T_1, T_2, \dots, T_r)$ of indecomposable objects in $\C(\module \Lambda)$ is called an {\em ordered $\tau$-rigid object} if 
$\oplus_{i=1}^r T_i$ is a $\tau$-rigid object, 
and an {\em ordered support $\tau$-tilting object}
if $r=n \colon = \rk \Lambda$.
The symmetric group acts on the set of ordered support $\tau$-tilting objects 
in a wide subcategory $\W$ of $\module \Lambda$, by reordering. 
We recall the following theorem from \cite{bm-exc}.

\begin{theorem} \cite[Thm. 5.4]{bm-exc} \label{bijection2}
	For each $\tau$-perpendicular subcategory $\W$ of $\module\Lambda$, there are mutually inverse bijections
	$$\{\text{ordered support $\tau$-tilting objects in $\W$ } \}$$
	$$\psi^{\W}  \downarrow \text{  } \uparrow \phi^{\W}$$
	$$\{\text{complete signed $\tau$-exceptional sequences in $\W$} \}$$
\end{theorem}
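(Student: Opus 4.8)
The plan is to construct the two maps $\psi^{\W}$ and $\phi^{\W}$ by a simultaneous recursion on $n := \rk \W$, peeling off one term at a time, with Theorem~\ref{bijection} and the compatibility formulas of Theorem~\ref{thm:formulas} supplying the inductive step. The base case $n = 0$, where both sides consist of the empty sequence, is trivial.

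For $\psi^{\W}$: given an ordered support $\tau$-tilting object $(T_1, \dots, T_n)$ in $\W$, the object $\bigoplus_i T_i$ is basic, so the $T_i$ are pairwise non-isomorphic and $T_i \oplus T_1$ is support $\tau$-rigid in $\C(\W)$ for each $i \ge 2$; hence $\E^{\W}_{T_1}(T_i)$ is defined and is an indecomposable support $\tau$-rigid object in $\C(\W_1)$, where $\W_1 := J_\W(\abs{T_1})$ (this coincides with $J_\W(T_1)$, since if $T_1 = P[1]$ then $P$ is relatively projective in $\W$ and so $\tau_{\W}P = 0$). The key claim is that $(\E^{\W}_{T_1}(T_2), \dots, \E^{\W}_{T_1}(T_n))$ is again an ordered support $\tau$-tilting object, now in $\W_1$. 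Writing $V = \bigoplus_{i \ge 2} T_i$ and extending $\E^{\W}_{T_1}$ to direct sums summand-wise, iterating Theorem~\ref{thm:formulas}(b) shows $\E^{\W}_{T_1}(V)$ is support $\tau$-rigid in $\C(\W_1)$, and Theorem~\ref{thm:formulas}(a) gives $J_{\W_1}(\E^{\W}_{T_1}(V)) = J_\W(T_1 \oplus V) = J_\W(\bigoplus_{i=1}^n T_i)$, which has rank $0$ because $\bigoplus_{i=1}^n T_i$ is support $\tau$-tilting in $\W$; hence by Theorem~\ref{prop:counting}(c), $\rk \E^{\W}_{T_1}(V) = \rk \W_1 = n-1$, so $\E^{\W}_{T_1}(V)$ is support $\tau$-tilting in $\W_1$. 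One then defines $\psi^{\W}(T_1, \dots, T_n) := (X_1, \dots, X_{n-1}, T_1)$, where $(X_1, \dots, X_{n-1}) := \psi^{\W_1}(\E^{\W}_{T_1}(T_2), \dots, \E^{\W}_{T_1}(T_n))$; this is a complete signed $\tau$-exceptional sequence in $\W$ directly from the definition, since $T_1$ is $\tau$-rigid or of the form $P[1]$ in $\W$ and $J(\abs{T_1}) = \W_1$.

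The map $\phi^{\W}$ is built dually: given a complete signed $\tau$-exceptional sequence $(X_1, \dots, X_n)$ in $\W$, peel off $X_n$, set $\W_1 = J_\W(\abs{X_n})$, form $(S_2, \dots, S_n) := \phi^{\W_1}(X_1, \dots, X_{n-1})$, and put $\phi^{\W}(X_1, \dots, X_n) := (X_n, \F^{\W}_{X_n}(S_2), \dots, \F^{\W}_{X_n}(S_n))$; the same computation as above, with $\F$ in place of $\E$, shows this is an ordered support $\tau$-tilting object in $\W$. That $\psi^{\W}$ and $\phi^{\W}$ are mutually inverse then follows by induction on $n$: peeling off $T_1$ (resp.\ $X_n$) is undone by reinstating it as the final (resp.\ first) entry, and on the remaining $n-1$ entries one invokes $\F^{\W}_{T_1}\E^{\W}_{T_1} = \mathrm{id}$ and $\E^{\W}_{T_1}\F^{\W}_{T_1} = \mathrm{id}$ from Theorem~\ref{bijection} together with the inductive hypothesis in $\W_1$.

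The main obstacle is exactly the key claim of the second paragraph: Theorem~\ref{bijection} controls only individual indecomposables, and on its own says nothing about which tuples of indecomposable support $\tau$-rigid objects of $\W_1$ arise from a single support $\tau$-tilting object. Turning the term-by-term images $\E^{\W}_{T_1}(T_i)$ into a genuine (basic, full-rank) support $\tau$-tilting object of $\W_1$ is precisely where the compatibility of the bijections $\E^{\W}_U$ with enlarging the distinguished summand $U$, i.e.\ Theorem~\ref{thm:formulas}, is indispensable; alternatively it can be read off directly from the construction of $\E^{\W}_U$ in~\cite{bm-exc}. All remaining steps are routine bookkeeping with the recursion and the definitions of (signed) $\tau$-exceptional sequence.
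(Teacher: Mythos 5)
This theorem is not proved in the paper at all: it is quoted from \cite[Thm.~5.4]{bm-exc}, so there is no in-paper argument to compare against, and your task was effectively to reconstruct the proof from that reference. Your sketch does follow essentially the same recursive strategy as the source: peel off one term, pass to the $\tau$-perpendicular category via the reduction bijection of Theorem~\ref{bijection}, use the compatibility formulas of Theorem~\ref{thm:formulas} to see that the remaining summands form a support $\tau$-tilting object there, and induct on the rank; your rank count via $J_{\W_1}(\E^{\W}_{T_1}(V))=J_{\W}(T_1\oplus V)=0$ and Theorem~\ref{prop:counting}(c) is clean and correct. Two caveats. First, your convention is reversed relative to the paper's: by Remark~\ref{rem:formulas} the map $\psi$ used throughout satisfies $A_n=T_n$, $A_{n-1}=\E_{A_n}(T_{n-1})$, etc., i.e.\ one peels off the \emph{last} entry of the ordered support $\tau$-tilting object, whereas you peel off $T_1$ and place it last; your construction still yields mutually inverse bijections (it is the paper's $\psi$ precomposed with order reversal), so it proves the literal statement, but it is not the map on which Theorem~\ref{thm:symm-action} and Proposition~\ref{prop:final} later rely, so the convention should be aligned. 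Second, you correctly identify the crux: Theorem~\ref{bijection} as stated only handles one indecomposable at a time, and the assertion that $\E^{\W}_{T_1}(T_2)\oplus\cdots\oplus\E^{\W}_{T_1}(T_n)$ is again support $\tau$-rigid in $\C(\W_1)$ (and dually for $\F$) is not a formal consequence of it. Extracting it from Theorem~\ref{thm:formulas}(b) amounts to reading off that the composite $\E^{J(U)}_{\E_U(V)}\E_U$ is well defined on the stated domain (plus the observation that support $\tau$-rigidity of a basic object is a pairwise condition on its indecomposable summands); more honestly, it is the stronger, non-indecomposable form of the reduction bijection proved in \cite{bm-exc}, which you acknowledge. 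With that input made explicit, your argument is a faithful reconstruction of the cited proof rather than a genuinely different route.
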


In the case $\W=\module\Lambda$, we write $\psi$ for  $\psi^{\module \Lambda}$  and $\phi$ for $\phi^{\module \Lambda}$. In this section, we will describe the action of the symmetric group on complete signed $\tau$-exceptional sequences in a 
$\tau$-perpendicular subcategory $\W$ of $\module\Lambda$ induced by the bijections above.

\begin{remark}\label{rem:formulas}
If $\psi(T_1, T_2, \dots, T_n) = (A_1, \dots, A_n)$, let
\begin{align*}
 \W_{n-1} & = J(A_n),\\
 \W_{n-2} &= J_{\W_{n-1}}(A_{n-1}), \\
 & \, \vdots  \\
 \W_j & = J_{\W_{j+1}}(A_{j+1}), \\
 & \, \vdots  \\
 \W_1 &= J_{\W_{2}}(A_2).
 \end{align*}
 
 Then we have
	\begin{align*}
	A_n &= T_n, \\
	A_{n-1} &= \E_{A_n}(T_{n-1}), \\
	A_{n-2} &= \E^{\W_{n-1}}_{A_{n-1}}  \E_{A_n} (T_{n-2}), \\
	& \, \vdots  \\
	A_{n-j} &= \E_{A_{n-j+1}}^{\W_{n-j+1}} \dots \E^{\W_{n-1}}_{A_{n-1}}   \E_{A_n}(T_{n-j}), \\
	&\, \vdots  \\
	A_1 &= \E_{A_2}^{\W_2} \dots \E^{\W_{n-1}}_{A_{n-1}}    \E_{A_n}(T_1).
\end{align*}
\end{remark}

\begin{lemma}\label{lem:sum}
With notation as above we have 
$$\W_{n-i} = J(T_n \oplus T_{n-1} \oplus \dots \oplus T_{n-{i+1}}),$$
for $i= 1, \dots, n-1$.	
\end{lemma}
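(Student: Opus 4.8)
The plan is to prove the formula $\W_{n-i} = J(T_n \oplus T_{n-1} \oplus \cdots \oplus T_{n-i+1})$ by induction on $i$, using Theorem~\ref{thm:formulas}(a) as the engine that turns the recursive definition of the $\W_j$ into an explicit direct-sum description. Throughout I would keep in mind that, by Remark~\ref{rem:formulas}, $A_n = T_n$ and $A_{n-j} = \E^{\W_{n-j+1}}_{A_{n-j+1}}\cdots\E^{\W_{n-1}}_{A_{n-1}}\E_{A_n}(T_{n-j})$, and that each $\W_j$ is itself a $\tau$-perpendicular subcategory (Theorem~\ref{prop:counting}(c) together with Theorem~\ref{allwidejasso}-type statements, but here we are in the general case so one invokes that $\tau$-perpendicular subcategories of $\tau$-perpendicular subcategories are again $\tau$-perpendicular in $\module\Lambda$).

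For the base case $i=1$ we have $\W_{n-1} = J(A_n) = J(T_n)$ directly from the definition in Remark~\ref{rem:formulas}, which matches the claimed formula. For the inductive step, assume $\W_{n-i} = J(T_n \oplus \cdots \oplus T_{n-i+1})$; write $U = T_n \oplus \cdots \oplus T_{n-i+1}$ so that $\W_{n-i} = J(U)$. By definition $\W_{n-i-1} = J_{\W_{n-i}}(A_{n-i})$, and by Remark~\ref{rem:formulas} the object $A_{n-i}$ is obtained from $T_{n-i}$ by applying the composite of $\E$-maps, which is precisely $\E_U(T_{n-i})$ after iterating Theorem~\ref{thm:formulas}(b): indeed $\E_{U} = \E_{T_n \oplus \cdots \oplus T_{n-i+1}}$ factors as $\E^{J(T_n\oplus\cdots\oplus T_{n-i+2})}_{\E_{\cdots}(T_{n-i+1})}\cdots\E_{T_n}$, which is exactly the composite appearing in the formula for $A_{n-i}$. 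Hence $A_{n-i} = \E_U(T_{n-i})$, and $\W_{n-i-1} = J_{J(U)}(\E_U(T_{n-i}))$. Now Theorem~\ref{thm:formulas}(a), applied with the roles of its $U$ and $V$ played by our $U$ and $T_{n-i}$, gives $J_{J(U)}(\E_U(T_{n-i})) = J(U \oplus T_{n-i}) = J(T_n \oplus \cdots \oplus T_{n-i+1} \oplus T_{n-i})$, which is the claimed formula for $i+1$, completing the induction.

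I expect the main obstacle to be bookkeeping rather than conceptual: one must verify carefully that the composite of $\E$-maps defining $A_{n-i}$ in Remark~\ref{rem:formulas} coincides with the single map $\E_U(T_{n-i})$, which requires an inner induction unwinding Theorem~\ref{thm:formulas}(b) across all the intermediate categories $\W_{n-1} \supseteq \W_{n-2} \supseteq \cdots \supseteq \W_{n-i}$, checking at each stage that the intermediate $\tau$-perpendicular category appearing as a superscript in Remark~\ref{rem:formulas} agrees with the one forced by the iterated factorisation of $\E_U$ (i.e. that $\W_{n-j} = J(T_n \oplus \cdots \oplus T_{n-j+1})$ for all the smaller indices $j < i$, which is available from the outer inductive hypothesis applied at those stages). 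One also needs to know that $U \oplus T_{n-i}$ is $\tau$-rigid in $\C(\module\Lambda)$ so that Theorem~\ref{thm:formulas}(a) applies; this follows because $(T_1,\dots,T_n)$ is an ordered support $\tau$-tilting object, so any subsum is support $\tau$-rigid. With those points checked the proof is a clean double induction.
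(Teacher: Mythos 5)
Your proof is correct and is essentially the paper's argument in expanded form: the paper's proof is the single line ``this follows from repeated use of Theorem~\ref{thm:formulas}~(a)'', and your induction supplies exactly the bookkeeping that makes it precise. In particular, your use of Theorem~\ref{thm:formulas}~(b) to identify the composite of $\E$-maps from Remark~\ref{rem:formulas} with the single map $\E_{T_n\oplus\cdots\oplus T_{n-i+1}}$ before invoking part~(a), together with the observation that partial sums of an ordered support $\tau$-tilting object are support $\tau$-rigid, is the right way to justify each step of that repetition.
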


\begin{proof}
This follows from repeated use of Theorem \ref{thm:formulas} (a).
\end{proof}

Let $\Lambda$ be an algebra of rank $n$, and
let $\T_o^{\Lambda}$ be the set of ordered basic support $\tau$-tilting objects in $\C(\module \Lambda)$.
There is a natural action of the symmetric group
$S_n$ on $T_o^{\Lambda}$,
given by $\pi_i (T_1, \dots, T_n) = (T_1, \dots, T_{i-1}, T_{i+1}, T_i, T_{i+2}, \dots, T_n)$, where $\pi_i$ denotes the simple transposition $(i\ i+1)$.

\begin{theorem}\label{thm:symm-action}
\begin{itemize}
\item[(a)] If $\SS = (A_1, \dots, A_n)$ is a signed $\tau$-exceptional sequence, then
for $i \in  \{1, \dots, n-1\}$ we have that
$$\widetilde{\pi_i}(\SS) \coloneqq (A_1, \dots, A_{i-1}, \E^{(i)}
(A_{i+1}), \F^{(i)}(A_i), A_{i+2}, \dots, A_n)$$	
	is a signed $\tau$-exceptional sequence, where
	$$\F^{(i)} \coloneqq  \F_{A_{i+1}}^{\W_{i+1}}$$
	and
	$$\E^{(i)} \coloneqq  \E^{\W_{i+1}}_{\F^{(i)}(A_i)}$$
		for $i \in \{1, \dots, n-2\}$ and where
	$\F^{(n-1)} \coloneqq  \F_{A_n}$ and
	$\E^{(n-1)} \coloneqq  \E_{\F^{(n-1)}(A_{n-1})}$. 
	\item[(b)]  For each $i  \in \{1, \dots, n-1\}$ we have $ \widetilde{\pi_i} \psi= \psi \pi_i$.
	\end{itemize}
\end{theorem}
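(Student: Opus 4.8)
The plan is to prove (b) first — that $\widetilde{\pi_i}\psi = \psi\pi_i$ — and then observe that (a) is essentially a corollary, since $\widetilde{\pi_i}(\SS)$ is then the image under $\psi$ of an ordered support $\tau$-tilting object, hence a signed $\tau$-exceptional sequence. So the substance is in (b), and I would split it into the ``easy'' range $i \in \{1,\dots,n-1\}$ but really into two cases: the case $i = n-1$ (the ``outermost'' transposition), and the case $i \leq n-2$, which I would reduce to the case $i = n-1$ by passing to a $\tau$-perpendicular subcategory.

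\textbf{The base case $i = n-1$.} Write $\psi(T_1,\dots,T_n) = (A_1,\dots,A_n)$. By Remark~\ref{rem:formulas}, $A_n = T_n$ and $A_{n-1} = \E_{A_n}(T_{n-1}) = \E_{T_n}(T_{n-1})$. Applying $\pi_{n-1}$ first swaps $T_{n-1}$ and $T_n$; I must compute $\psi(T_1,\dots,T_{n-2},T_n,T_{n-1})$. For this sequence the new last term is $T_{n-1}$, and the new second-to-last term is $\E_{T_{n-1}}(T_n)$. I would verify, using Theorem~\ref{thm:formulas}(a) (which gives $J(T_n \oplus T_{n-1}) = J_{J(T_n)}(\E_{T_n}(T_{n-1})) = J_{J(T_{n-1})}(\E_{T_{n-1}}(T_n))$, so the wide subcategory $\W_{n-2}$ is unchanged by the swap), that the remaining terms $A_1,\dots,A_{n-2}$ — which by Remark~\ref{rem:formulas} are built by applying $\E$-maps over the flag $\W_{n-2} \subseteq \W_{n-1} \subseteq \cdots$ — depend only on $\W_{n-2}$ and $T_1,\dots,T_{n-2}$, hence are genuinely unchanged. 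Finally I must check that $\F^{(n-1)}(A_{n-1}) = \F_{A_n}(\E_{A_n}(T_{n-1})) = T_{n-1}$, so the new last term is $T_{n-1}$ as required, and that $\E^{(n-1)}(\F^{(n-1)}(A_{n-1})) = \E_{T_{n-1}}(A_n) = \E_{T_{n-1}}(T_n)$ is indeed the new $(n-1)$st term. This is all bookkeeping once Theorem~\ref{thm:formulas}(a) is in hand.

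\textbf{Reduction of the general case.} For $i \leq n-2$, I would use the fact (Remark~\ref{rem:formulas} together with Lemma~\ref{lem:sum}) that $\psi(T_1,\dots,T_n)$ restricted to its first $i+1$ terms, viewed inside $\W_{i+1} = J(T_n \oplus \cdots \oplus T_{i+2})$, is precisely $\psi^{\W_{i+1}}(T_1,\dots,T_{i+1})$ — i.e.\ the bijection is compatible with truncation, which follows from Theorem~\ref{thm:formulas}(b) applied iteratively. The transposition $\pi_i$ with $i \leq n-2$ acts only on the first $n-1 \geq i+1$ slots and leaves $T_{i+2},\dots,T_n$ fixed, hence fixes $\W_{i+1}$; so the problem reduces to computing $\psi^{\W_{i+1}}\pi_i(T_1,\dots,T_{i+1})$, and now $\pi_i$ is the \emph{outermost} transposition relative to the length-$(i+1)$ sequence inside $\W_{i+1}$. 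Applying the base case (with $\W_{i+1}$ in place of $\module\Lambda$ and $i$ in place of $n-1$) gives exactly the formula in (a): the maps $\F^{(i)} = \F_{A_{i+1}}^{\W_{i+1}}$ and $\E^{(i)} = \E^{\W_{i+1}}_{\F^{(i)}(A_i)}$ are precisely the ``$\F_{A_n}$'' and ``$\E_{\F^{(n-1)}(A_{n-1})}$'' of the base case, relativized to $\W_{i+1}$.

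\textbf{The main obstacle.} The genuinely delicate point is the compatibility of $\psi$ with truncation/relativization — i.e.\ that $\psi^{\W_{i+1}}$ applied to the first $i+1$ terms reproduces $A_1,\dots,A_{i+1}$, and that the composite-of-$\E$-maps formula in Remark~\ref{rem:formulas} behaves well under this restriction. This hinges on Theorem~\ref{thm:formulas}(b), the identity $\E_{U\oplus V} = \E^{J(U)}_{\E_U(V)}\E_U$, which must be unwound carefully over the whole flag $\W_1 \subseteq \cdots \subseteq \W_{n-1} \subseteq \module\Lambda$, keeping track of which base category each $\E$ or $\F$ is taken in. Once this compatibility is established cleanly (it is really a statement that the construction in Remark~\ref{rem:formulas} is ``local'' to each step of the flag), the rest is a finite unraveling of definitions. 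I would therefore state and prove the truncation compatibility as a separate lemma before attacking either part of the theorem.
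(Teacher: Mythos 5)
Your overall strategy coincides with the paper's: establish the outermost case $i=n-1$ by explicit computation from Remark~\ref{rem:formulas} (this is Proposition~\ref{prop:final} in the paper), then handle $i<n-1$ by relativizing to the $\tau$-perpendicular subcategory $\W_{i+1}$, where $\pi_i$ becomes the outermost transposition. However, one step in your base case does not follow as written. To conclude that $A_1,\dots,A_{n-2}$ are unchanged by the swap you invoke Theorem~\ref{thm:formulas}(a), which only gives the equality of wide subcategories $J_{J(T_n)}(\E_{T_n}(T_{n-1}))=J(T_{n-1}\oplus T_n)=J_{J(T_{n-1})}(\E_{T_{n-1}}(T_n))$. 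Equality of these categories does not yet imply that the two composite bijections $\E^{J(T_n)}_{\E_{T_n}(T_{n-1})}\E_{T_n}$ and $\E^{J(T_{n-1})}_{\E_{T_{n-1}}(T_n)}\E_{T_{n-1}}$ agree as maps --- and it is through these maps (applied to $T_{n-2},T_{n-3},\dots$ and then iterated down the flag) that the terms $A_j$, $j\leq n-2$, are computed; in general the bijection $\E_U$ is determined by $U$, not by $J(U)$ alone, so your claim that these terms ``depend only on $\W_{n-2}$ and $T_1,\dots,T_{n-2}$'' is not justified. What is needed is precisely Theorem~\ref{thm:formulas}(b): applying it twice gives $\E^{J(T_n)}_{\E_{T_n}(T_{n-1})}\E_{T_n}=\E_{T_{n-1}\oplus T_n}=\E^{J(T_{n-1})}_{\E_{T_{n-1}}(T_n)}\E_{T_{n-1}}$, which the paper isolates as Lemma~\ref{lem:commute}; combined with Lemma~\ref{lem:sum} this makes the deeper terms manifestly symmetric in $T_{n-1}$ and $T_n$. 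You do identify (b) as the linchpin in your final paragraph, so this is a misattribution rather than a wrong idea, but the base case must be run through (b), not (a).

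A second, smaller imprecision: in the reduction you write that the first $i+1$ terms of $\psi(T_1,\dots,T_n)$ are $\psi^{\W_{i+1}}(T_1,\dots,T_{i+1})$; the objects $T_1,\dots,T_{i+1}$ need not lie in $\C(\W_{i+1})$, so the truncation compatibility should be stated for their images under $\E_{T_{i+2}\oplus\cdots\oplus T_n}$, i.e.\ under the composite of $\E$-maps appearing in Remark~\ref{rem:formulas}, identified via Lemma~\ref{lem:sum} and Theorem~\ref{thm:formulas}(b). Since $\pi_i$ only permutes two slots that receive the same composite map, the reduction goes through once phrased this way, and this is exactly how the paper's proof of the theorem concludes from Proposition~\ref{prop:final}.
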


\begin{lemma}\label{lem:commute}
Let $(B,C)$ be an ordered $\tau$-rigid object in $\C(\module \Lambda)$.
Then we have that
$\E^{J(C)}_{\E_C(B)}\E_C = \E^{J(B)}_{\E_B(C)}\E_B$.
\end{lemma}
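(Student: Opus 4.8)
The plan is to identify both sides of the claimed equation as the transition map associated to the support $\tau$-rigid object $B\oplus C$, up to the ordering of $B$ and $C$, and then invoke Theorem~\ref{thm:formulas}(b). Concretely, Theorem~\ref{thm:formulas}(b) applied to the ordered pair $(C,B)$ (i.e. with ``$U$''$=C$ and ``$V$''$=B$) gives
$$\E_{C\oplus B} = \bigl(\E^{J(C)}_{\E_C(B)}\bigr)\E_C,$$
while the same theorem applied to the ordered pair $(B,C)$ gives
$$\E_{B\oplus C} = \bigl(\E^{J(B)}_{\E_B(C)}\bigr)\E_B.$$
Since $C\oplus B = B\oplus C$ as objects of $\C(\module\Lambda)$, the maps $\E_{C\oplus B}$ and $\E_{B\oplus C}$ are literally the same map: by Theorem~\ref{bijection} the bijection $\E_U$ depends only on the object $U$, not on any chosen decomposition or ordering of its summands. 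Comparing the two displayed factorisations yields the desired identity $\E^{J(C)}_{\E_C(B)}\E_C = \E^{J(B)}_{\E_B(C)}\E_B$.

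\textbf{Key steps, in order.} First I would check that Theorem~\ref{thm:formulas}(b) does apply in each of the two orderings: it requires $U\oplus V$ to be $\tau$-rigid in $\C(\module\Lambda)$, which is exactly the hypothesis that $(B,C)$ is an ordered $\tau$-rigid object, and $\tau$-rigidity of a direct sum is symmetric in its summands. Second, I would write down both instances of the formula, being careful with which component plays the role of ``$U$'' and which ``$V$'' — in one instance $\E_C$ is applied first and the correction term lives over $J(C)$; in the other $\E_B$ is applied first and the correction term lives over $J(B)$. Third, I would note that both instances compute bijections with the \emph{same} source and target: the source is $\{X\in\ind\C(\module\Lambda)\mid X\oplus B\oplus C\text{ support }\tau\text{-rigid}\}\setminus\ind(B\oplus C)$, and the target involves $\C(J(B\oplus C))$, which by Theorem~\ref{thm:formulas}(a) equals $\C(J_{J(C)}(\E_C(B))) = \C(J_{J(B)}(\E_B(C)))$. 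Finally, since $\E_{B\oplus C}$ is by definition a single well-defined map attached to the object $B\oplus C$, the two factorisations must agree as maps, giving the lemma.

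\textbf{Main obstacle.} The only subtle point is the last one: one must be sure that Theorem~\ref{bijection}/Theorem~\ref{thm:formulas} really attach $\E_{B\oplus C}$ to the \emph{object} $B\oplus C$ and not to an ordered pair, so that reordering the summands genuinely produces the identical map rather than merely two maps that happen to have the same domain and codomain. This is immediate from the statement of Theorem~\ref{bijection}, where the bijection $\E^{\W}_U$ is indexed by the support $\tau$-rigid object $U$ alone; Theorem~\ref{thm:formulas}(b) is then just a statement about how this single map factors when $U$ is written as a sum in a chosen order. Once this is granted, the proof is a one-line comparison of the two factorisations, with no computation required.
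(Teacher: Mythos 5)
Your argument is exactly the paper's: the published proof applies Theorem~\ref{thm:formulas}(b) twice to get $\E^{J(C)}_{\E_C(B)}\E_C = \E_{C\oplus B} = \E_{B\oplus C} = \E^{J(B)}_{\E_B(C)}\E_B$, which is precisely your two factorisations compared via $C\oplus B=B\oplus C$. Your additional remarks on domains, codomains and the map depending only on the object $B\oplus C$ are correct but not needed beyond this identification.
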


\begin{proof}
By applying Theorem \ref{thm:formulas} (b) twice, we obtain 
$$ \E^{J(C)}_{\E_C(B)}\E_C = \E_{C \oplus B} 
=  \E_{B \oplus C} 
=   \E^{J(B)}_{\E_B(C)}\E_B.	$$
\end{proof}

\begin{proposition}\label{prop:final}
Let $(T_1, \dots, T_n)$ be an ordered support $\tau$-tilting object in $\C(\module \Lambda)$,
and assume that $\psi(T_1, \dots, T_n) = (A_1, \dots, A_n)$.
Then $$\psi(\pi_{n-1}(T_1, \dots, T_n)) = (A_1, \dots A_{n-2}, \E_{\F_{A_{n}}
	(A_{n-1})}(A_n), \F_{A_n}(A_{n-1})).$$
\end{proposition}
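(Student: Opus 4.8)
The plan is to unwind the definition of $\psi$ via the explicit formulas in Remark~\ref{rem:formulas} and then apply the commutation identity of Lemma~\ref{lem:commute} to the last two slots. Write $(T_1, \dots, T_n)$ for the given ordered support $\tau$-tilting object with $\psi(T_1, \dots, T_n) = (A_1, \dots, A_n)$, and set $(T_1', \dots, T_n') \coloneqq \pi_{n-1}(T_1, \dots, T_n)$, so that $T_i' = T_i$ for $i \leq n-2$, while $T_{n-1}' = T_n$ and $T_n' = T_{n-1}$. Write $\psi(T_1', \dots, T_n') = (A_1', \dots, A_n')$. I want to show $A_i' = A_i$ for $i \leq n-2$, that $A_{n-1}' = \E_{\F_{A_n}(A_{n-1})}(A_n)$, and that $A_n' = \F_{A_n}(A_{n-1})$.

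First I would handle the last two slots directly. By Remark~\ref{rem:formulas}, $A_n = T_n$ and $A_{n-1} = \E_{A_n}(T_{n-1}) = \E_{T_n}(T_{n-1})$; applying the same formulas to the permuted tuple gives $A_n' = T_n' = T_{n-1}$ and $A_{n-1}' = \E_{A_n'}(T_{n-1}') = \E_{T_{n-1}}(T_n)$. Now I must check these agree with the claimed expressions. Since $A_{n-1} = \E_{T_n}(T_{n-1})$ and $\F_{T_n}$ is the inverse of $\E_{T_n}$, we get $\F_{A_n}(A_{n-1}) = \F_{T_n}(\E_{T_n}(T_{n-1})) = T_{n-1}$, which matches $A_n' = T_{n-1}$. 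For the other slot, $\E_{\F_{A_n}(A_{n-1})}(A_n) = \E_{T_{n-1}}(T_n) = A_{n-1}'$, as required. Here one needs to know that $(T_{n-1}, T_n)$, and hence $(T_n, T_{n-1})$, is an ordered $\tau$-rigid object so that all these maps are defined; this is immediate since $T_n \oplus T_{n-1}$ is a direct summand of the support $\tau$-tilting object $\bigoplus_i T_i$.

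Next I would verify that the first $n-2$ terms are unchanged. By Remark~\ref{rem:formulas} applied to $(T_1, \dots, T_n)$, for $1 \leq k \leq n-2$ we have
$$A_k = \E^{\W_{k+1}}_{A_{k+1}} \cdots \E^{\W_{n-2}}_{A_{n-2}} \E^{\W_{n-1}}_{A_{n-1}} \E_{A_n}(T_k),$$
and the tail of this composite, $\E^{\W_{n-1}}_{A_{n-1}} \E_{A_n}$, equals $\E^{J(A_n)}_{\E_{A_n}(T_{n-1})} \E_{A_n}$ since $\W_{n-1} = J(A_n)$ and $A_{n-1} = \E_{A_n}(T_{n-1})$. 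By Lemma~\ref{lem:commute} applied to the ordered $\tau$-rigid pair $(T_{n-1}, T_n)$ (taking $B = T_{n-1}$, $C = T_n$), this tail equals $\E^{J(T_{n-1})}_{\E_{T_{n-1}}(T_n)} \E_{T_{n-1}}$. On the other side, Remark~\ref{rem:formulas} applied to the permuted tuple gives
$$A_k' = \E^{\W'_{k+1}}_{A'_{k+1}} \cdots \E^{\W'_{n-2}}_{A'_{n-2}} \E^{\W'_{n-1}}_{A'_{n-1}} \E_{A_n'}(T_k),$$
and its tail $\E^{\W'_{n-1}}_{A'_{n-1}} \E_{A_n'}$ equals $\E^{J(T_{n-1})}_{\E_{T_{n-1}}(T_n)} \E_{T_{n-1}}$ by the same identifications ($A_n' = T_{n-1}$, $A'_{n-1} = \E_{T_{n-1}}(T_n)$, $\W'_{n-1} = J(A_n') = J(T_{n-1})$). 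So the two tails coincide as maps. To conclude $A_k' = A_k$ by downward induction on $k$ from $n-2$, I also need the intermediate wide subcategories to agree, i.e. $\W'_j = \W_j$ for $j \leq n-2$: by Lemma~\ref{lem:sum}, $\W_{n-i} = J(T_n \oplus \cdots \oplus T_{n-i+1})$ and $\W'_{n-i} = J(T_n' \oplus \cdots \oplus T_{n-i+1}')$, and for $i \geq 2$ the sets $\{T_n, \dots, T_{n-i+1}\}$ and $\{T_n', \dots, T_{n-i+1}'\}$ are equal (both contain $T_n$ and $T_{n-1}$), so $J$ of their sums agree; and the $\E$-maps $\E^{\W_j}_{A_j}$ that appear match term-by-term once we know $A_j' = A_j$, completing the induction.

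I expect the main obstacle to be bookkeeping: making sure the composite formulas from Remark~\ref{rem:formulas} are lined up so that Lemma~\ref{lem:commute} can be slotted in at exactly the right place, and verifying that all the wide subcategories $\W_j$ and $\W'_j$ (for $j \le n-2$) genuinely coincide rather than merely being abstractly isomorphic — this is where Lemma~\ref{lem:sum} is essential, since it expresses each $\W_j$ intrinsically in terms of an unordered set of summands of the fixed support $\tau$-tilting object, which is manifestly invariant under swapping $T_{n-1}$ and $T_n$. Everything else is a routine unravelling of definitions and the fact that $\F$ is inverse to $\E$.
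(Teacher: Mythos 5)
Your proof is correct and follows essentially the same route as the paper: Remark~\ref{rem:formulas} handles the last two slots via $A_n = T_n$, $A_{n-1} = \E_{T_n}(T_{n-1})$ and the inverse relation between $\E$ and $\F$, while Lemma~\ref{lem:commute} (with $B = T_{n-1}$, $C = T_n$) together with Lemma~\ref{lem:sum} gives the agreement of the first $n-2$ terms. You merely spell out in more detail the induction and the identification of the intermediate subcategories $\W_j$, which the paper leaves as "follows directly."
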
	

\begin{proof}
Let  $\ (B_1, B_2, \dots, B_n) = \psi(\pi_{n-1}(T_1, \dots, T_n)) = \psi(T_1, \dots, T_{n-2}, T_n, T_{n-1})$. We need to show that 
\begin{itemize}
	\item[(i)] $B_n = \F_{A_n}(A_{n-1})$,
	\item[(ii)] $B_{n-1} = \E_{\F_{A_{n}}(A_{n-1})}(A_n)$, and
	\item[(iii)] $B_j = A_j$, for $1 \leq j \leq n-2$.
\end{itemize}
	
Note that, by Remark~\ref{rem:formulas}, we have that $B_n=T_{n-1}$, $A_n=T_n$ and
that $A_{n-1} = \E_{T_n}(T_{n-1})$. Hence, we have $\F_{T_n}(A_{n-1}) = \F_{T_n}\E_{T_n}(T_{n-1}) = T_{n-1} =
B_n$, which proves claim (i).
Moreover, it also follows from Remark~\ref{rem:formulas} that $B_{n-1} = \E_{T_{n-1}}(T_n) = \E_{\F_{T_n}(A_{n-1})}(A_n)$,
which proves claim (ii).

It remains to prove that $B_{j} = A_j$ for $j \leq n-2$.
Apply Lemma \ref{lem:commute}, with $B= T_{n-1}$ and $C = T_n$ to obtain that 
$$\E^{J(T_n)}_{\E_{T_n}(T_{n-1})}\E_{T_n} = \E^{J(T_{n-1})}_{\E_{T_{n-1}}(T_n)}\E_{T_{n-1}}.$$

It now follows directly, from Remark \ref{rem:formulas} and Lemma \ref{lem:sum}, that $B_{j} = A_j$ for $j \leq n-2$.
\end{proof}

\begin{proof}[Proof of Theorem \ref{thm:symm-action}]
By Proposition \ref{prop:final}, it follows that both (a) and (b) hold for $i = n-1$.
Assume $i < n-1$. Then $(A_1, \dots, A_{i-1})$ is a complete signed $\tau$-exceptional
sequence for the $\tau$-perpendicular subcategory $\W_{i-1}$, as defined in Remark \ref{rem:formulas}.
Finally, Proposition \ref{prop:final} implies that both (a) and (b) hold also in this case.
\end{proof}

\section{Mutation}
\label{sec:mutation}
For a fixed positive integer $n$, 
consider the group $G_n = \langle \mu_1, \dots , \mu_n \mid \mu_i^2 = e \rangle$ (as in~\cite[\S1]{kingpressland}).
Let $\Lambda$ be a fixed algebra of rank $n$.

Mutation of support $\tau$-tilting objects (as in~\cite[Thm. 2.18]{air}) induces a mutation on the set $\T^{\Lambda}_o$ of ordered basic support $\tau$-tilting objects in $\C(\module \Lambda)$, which can be regarded as an action of $G_n$ on $\T^{\Lambda}_o$ and hence, via the bijections in Theorem~\ref{bijection2}, an action on the set of complete signed $\tau$-exceptional sequences.

The following result follows from~\cite[Thm. 2.18]{air}.

\begin{proposition} \cite[Thm. 2.18]{air}
Let $T = (T_1, \dots, T_n)$ be an ordered support $\tau$-tilting object in $\C(\module \Lambda)$.
Let $i \in \{1, \dots, n\}$. Then there is a unique indecomposable object $T_i^{\ast}$ in 
 $\C(\module \Lambda)$ such that $T(i)=(T_1, \dots, T_{i-1}, T_i^{\ast},
 T_{i+1}, \dots,  T_n)$ is an ordered support $\tau$-tilting object with $T_i^{\ast} \not \simeq T_i$.
\end{proposition}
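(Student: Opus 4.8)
The plan is to reduce the statement to the mutation theorem of Adachi--Iyama--Reiten by passing from objects of $\C(\module \Lambda)$ to support $\tau$-tilting pairs. Write $U' = \bigoplus_{j=1}^{n} T_j$ and let $U = \bigoplus_{j \neq i} T_j$; decomposing $U$ along $\C(\module \Lambda) = \module \Lambda \oplus \module \Lambda[1]$ as $U = M_U \oplus P_U[1]$ with $M_U$ a $\tau$-rigid module, $P_U$ projective and $\Hom(P_U, M_U) = 0$, the pair $(M_U, P_U)$ is a basic support $\tau$-rigid pair in the sense of~\cite{air} with $\rk(U) = n-1$ indecomposable summands, i.e.\ a basic almost complete support $\tau$-tilting pair. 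Moreover, an indecomposable summand of any object of $\C(\module \Lambda)$ of the form $M \oplus P[1]$ (with $M \in \module \Lambda$, $P$ projective) is either an indecomposable summand of $M$ or of the form $Q[1]$ for $Q$ an indecomposable summand of $P$, so the assignment $M \oplus P[1] \mapsto (M, P)$ identifies the poset of basic support $\tau$-rigid objects of $\C(\module \Lambda)$ under direct summands with the poset of basic support $\tau$-rigid pairs, and carries basic support $\tau$-tilting objects to basic support $\tau$-tilting pairs.

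By~\cite[Thm. 2.18]{air}, the basic almost complete support $\tau$-tilting pair $(M_U, P_U)$ is a direct summand of exactly two basic support $\tau$-tilting pairs. Transporting this along the identification above, $U$ is a direct summand of exactly two basic support $\tau$-tilting objects of $\C(\module \Lambda)$; one of them is $U'$, and by the Krull--Schmidt theorem the other has the form $U \oplus T_i^{\ast}$ for a unique indecomposable $T_i^{\ast}$ not isomorphic to any $T_j$ with $j \neq i$. Since these two completions are distinct basic objects both containing the common summand $U$ (which has $n-1$ indecomposable summands), their remaining summands differ, whence $T_i^{\ast} \not\simeq T_i$. As $U \oplus T_i^{\ast}$ is a basic support $\tau$-tilting object with $\rk(U \oplus T_i^{\ast}) = n = \rk \Lambda$, the reordered sequence $T(i) = (T_1, \dots, T_{i-1}, T_i^{\ast}, T_{i+1}, \dots, T_n)$ is an ordered support $\tau$-tilting object with $T_i^{\ast} \not\simeq T_i$, giving existence.

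For uniqueness, suppose $T_i^{\ast\ast}$ is an indecomposable object of $\C(\module \Lambda)$ with $T_i^{\ast\ast} \not\simeq T_i$ such that $(T_1, \dots, T_{i-1}, T_i^{\ast\ast}, T_{i+1}, \dots, T_n)$ is an ordered support $\tau$-tilting object. Then $T_i^{\ast\ast}$ is not isomorphic to any $T_j$ with $j \neq i$ --- otherwise $U \oplus T_i^{\ast\ast}$ would not be basic of rank $n$ --- so $U \oplus T_i^{\ast\ast}$ is a basic support $\tau$-tilting object having $U$ as a direct summand. By the previous paragraph it is one of $U'$ and $U \oplus T_i^{\ast}$; since $T_i^{\ast\ast} \not\simeq T_i$ it cannot be $U'$, so $U \oplus T_i^{\ast\ast} \simeq U \oplus T_i^{\ast}$ and hence $T_i^{\ast\ast} \simeq T_i^{\ast}$ by Krull--Schmidt. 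The only step warranting care is the poset identification of the first paragraph, which lets~\cite[Thm. 2.18]{air} be applied verbatim; the rest is formal, so there is no genuine obstacle.
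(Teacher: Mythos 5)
Your argument is correct and is exactly the deduction the paper intends: the paper offers no proof beyond citing \cite[Thm.~2.18]{air}, and your translation between objects $M\oplus P[1]$ of $\C(\module\Lambda)$ and support $\tau$-tilting pairs $(M,P)$, followed by the ``exactly two completions'' statement and a Krull--Schmidt argument, is the standard way to make that citation precise. No gaps; the only content beyond AIR is the bookkeeping you already carry out.
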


With $T$ and $T(i)$ as above, we set $\mu_i(T) = T(i)$.
This defines a $G$-action on  $\T^{\Lambda}_o$.
We now describe the corresponding action on the set of complete signed $\tau$-exceptional sequences.

For a complete signed $\tau$-exceptional sequence $\SS = (A_1, \dots, A_n)$, let
$$s_1(\SS) = (\widetilde{A_1}, A_2, \dots, A_n),$$ where 
$$\widetilde{A_1} = 
\begin{cases}
A_1[1] & \text{ if } A_1 \in \module \Lambda; \\
A_1[-1] & \text{ if } A_1 \in \module \Lambda[1].
\end{cases}
$$

Moreover, for $j>1$, let $s_j(\SS) = \widetilde{\pi}_{j-1}  \widetilde{\pi}_{j-2} \dots \widetilde{\pi_1} s_1 \widetilde{\pi}_1
\dots \widetilde{\pi}_{j-2} \widetilde{\pi}_{j-1}(\SS)$.

We make the following observation:

\begin{lemma} \label{lem:rank1}
Let $\Lambda$ be an algebra with a unique simple module. Then the projective cover of the simple module is the unique indecomposable $\tau$-rigid $\Lambda$-module.
\end{lemma}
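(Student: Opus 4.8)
The plan is to recall that for an algebra $\Lambda$ with a unique simple module $S$, every module has $S$ as its only composition factor, so the indecomposable modules are precisely the indecomposable projectives of the quotient algebras $\Lambda/\rad^k\Lambda$ — but more efficiently, I would simply show that the indecomposable projective cover $P$ of $S$ is $\tau$-rigid, and that any $\tau$-rigid indecomposable $M$ must be isomorphic to $P$. For the first part, note that $P$ is projective, so $\Hom(P,\tau P)=0$ automatically (projectives have no maps to the image of $\tau$, since $\tau$ of any module has no projective summands, and more directly $\Hom(P, -)$ is exact and $\tau P$ admits no surjection detecting this — but the cleanest statement is that projective modules are always $\tau$-rigid, as $\tau P=0$ when $P$ is projective).

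For the second part — that $P$ is the \emph{only} indecomposable $\tau$-rigid module — I would argue as follows. Let $M$ be an indecomposable $\tau$-rigid $\Lambda$-module. Since $\Lambda$ has a single simple module, the top of $M$ is a direct sum of copies of $S$; as $M$ is indecomposable, I claim its top is simple, so there is a surjection $P \twoheadrightarrow M$. If $M$ is not projective, this surjection has nonzero kernel $K$, giving a short exact sequence $0 \to K \to P \to M \to 0$ with $K \in \Gen M$ failing — actually the key input is the characterisation via Lemma~\ref{lem:extcond}(b): $M$ is $\tau$-rigid iff $\Ext^1(M, \Gen M)=0$. I would then derive a contradiction: since every module is generated by $S$ and $S$ is a quotient of $M$ (being its top), we have $\Gen M \supseteq \Gen S = \module\Lambda$, so $\tau$-rigidity of $M$ forces $\Ext^1(M,\module\Lambda)=0$, i.e. $M$ is projective. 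Hence $M \cong P$.

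The main obstacle — really the only point needing care — is justifying that an indecomposable module over a local-type situation has simple top, equivalently that $\Gen M = \module\Lambda$ whenever $M \neq 0$. This holds because $S$ is the unique simple module: any nonzero $M$ surjects onto its top, which is a nonzero semisimple module, hence onto $S$; and every module is a quotient of a free module, hence lies in $\Gen S$, so $\Gen M \supseteq \Gen S \supseteq \module\Lambda$. With this in hand the argument closes immediately. An alternative, perhaps even shorter, route is to invoke that over such an algebra the support $\tau$-tilting objects are in bijection with order ideals (here there is essentially one nonzero $\tau$-tilting module and the zero object together with $P[1]$), but the direct argument above is self-contained and avoids appealing to classification results.
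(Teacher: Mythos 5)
Your first half is fine: $P$ is projective, hence $\tau P = 0$ and $P$ is $\tau$-rigid, and over an algebra with a unique simple module the projective cover of $S$ is the only indecomposable projective. The gap is in the second half, at the step $\Gen S = \module\Lambda$. With the paper's convention (the one under which Lemma~\ref{lem:extcond} is stated), $\Gen S$ consists of quotients of finite direct sums of copies of $S$, so $\Gen S = \add S$ is just the class of semisimple modules; it equals $\module\Lambda$ only when $\Lambda$ is semisimple. Your justification ``every module is a quotient of a free module, hence lies in $\Gen S$'' conflates free modules, which are direct sums of copies of $P$, with direct sums of copies of $S$: it proves $\module\Lambda = \Gen P$, not $\module\Lambda \subseteq \Gen S$ (over $\Lambda = k[x]/(x^2)$, for instance, the regular module is not in $\Gen S$). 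Consequently $\Gen M \supseteq \module\Lambda$ is not established, and you cannot conclude $\Ext^1(M,\module\Lambda)=0$ from Lemma~\ref{lem:extcond}(b) alone. A side remark: ``indecomposable implies simple top'' is also false in general (over $k\langle x,y\rangle/(x,y)^2$ there are indecomposables with two-dimensional top), but you do not need it --- all you need is that $S$ is a quotient of $M$, which holds because the top of $M$ is a nonzero direct sum of copies of $S$.

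The missing idea, which is exactly how the paper closes the argument (phrased there in contrapositive form), is a d\'evissage along composition series rather than an identification of $\Gen M$ with $\module\Lambda$. Since $S \in \Gen M$, $\tau$-rigidity of $M$ together with Lemma~\ref{lem:extcond}(b) gives $\Ext^1(M,S)=0$; and since every $\Lambda$-module has all composition factors isomorphic to $S$, applying the long exact sequence repeatedly along a composition series yields $\Ext^1(M,N)=0$ for every $N$, so $M$ is projective and hence $M \cong P$. With that replacement your argument is correct and agrees in substance with the paper's proof of Lemma~\ref{lem:rank1}.
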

\begin{proof}
Let $S$ be the unique simple $\Lambda$-module, and suppose that $X$ is a non-projective indecomposable $\tau$-rigid $\Lambda$-module.
Then $\Ext^1(X,M)\not=0$ for some $\Lambda$-module $M$. Since $M$ must be constructed from $S$ by repeated extensions with $S$, it follows that $\Ext^1(X,S)\not=0$. Since $X$ is also constructed from $S$ by repeated extensions with $S$, we have that $S$ is a factor of $X$, so $\Ext^1(X,\Gen X)\not=0$. By Lemma~\ref{lem:extcond}, $X$ is not $\tau$-rigid.
\end{proof}

We now have:

\begin{lemma}\label{lem:first}
	If $\SS = (A_1, A_2, \dots, A_n)$ is a signed $\tau$-exceptional sequence, then so is
	$s_j(\SS)$ for each $j= 1, \dots, n$. Moreover $\SS$ and $s_1(\SS)$ are the only signed $\tau$-exceptional sequences of the form $(X, A_2, \dots, A_n)$ for some object $X$ in $\C(\module\Lambda)$.
	\end{lemma}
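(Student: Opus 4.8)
The plan is to prove the two assertions of Lemma~\ref{lem:first} separately, exploiting the recursive structure of signed $\tau$-exceptional sequences and the material already assembled.

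First I would handle the claim that $s_j(\SS)$ is again a signed $\tau$-exceptional sequence. For $j=1$ this is almost immediate: the tail $(A_2,\dots,A_n)$ is unchanged, so by definition one only needs that $\widetilde{A_1}$ is an admissible first term in the $\tau$-perpendicular subcategory $\W_1 = J_{\W_2}(A_2)$ (with notation as in Remark~\ref{rem:formulas}). Since $\rk \W_1 = 1$, Lemma~\ref{lem:rank1} tells us the unique indecomposable $\tau$-rigid object of $\W_1$ is the projective cover $P$ of its simple module, so $|A_1| = P$ and the two admissible first terms of a signed $\tau$-exceptional sequence with tail $(A_2,\dots,A_n)$ are exactly $P$ and $P[1]$ — which are precisely $A_1$ and $\widetilde{A_1}$ (in one order or the other). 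This simultaneously gives the ``only if'' half of the \emph{Moreover} clause. For general $j$, I would argue that $s_j(\SS)$ is a composite $\widetilde{\pi}_{j-1}\cdots\widetilde{\pi}_1 \, s_1 \, \widetilde{\pi}_1 \cdots \widetilde{\pi}_{j-1}(\SS)$, and invoke Theorem~\ref{thm:symm-action}(a), which says each $\widetilde{\pi}_i$ sends signed $\tau$-exceptional sequences to signed $\tau$-exceptional sequences, together with the $j=1$ case just established; composing these closure statements gives the result.

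For the \emph{Moreover} clause, the substantive direction is that $\SS$ and $s_1(\SS)$ are the \emph{only} signed $\tau$-exceptional sequences of the form $(X, A_2,\dots,A_n)$. By definition, such a sequence requires $(A_2,\dots,A_n)$ to be a complete signed $\tau$-exceptional sequence in $J(|A_n|) \supseteq \cdots$, with $X$ an indecomposable object of $\C(\W_1)$ whose $|X|$ is $\tau$-rigid in $\W_1$. As above, $\W_1$ has rank $1$, so by Lemma~\ref{lem:rank1} the only indecomposable $\tau$-rigid module in $\W_1$ is the projective cover $P$ of its simple, whence $|X| \in \{P\}$ and, allowing the shift, $X \in \{P, P[1]\}$. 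Since $A_1 \in \{P, P[1]\}$ and $\widetilde{A_1}$ is the other one, we conclude $X \in \{A_1, \widetilde{A_1}\}$, i.e. $(X, A_2,\dots,A_n)$ is $\SS$ or $s_1(\SS)$.

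I expect the only real subtlety — and hence the main obstacle — to be making precise that $\W_1$ is genuinely a rank-one $\tau$-perpendicular subcategory equivalent to $\module \Lambda_1$ for a rank-one algebra $\Lambda_1$, so that Lemma~\ref{lem:rank1} applies inside it, and that ``admissible first term of the signed $\tau$-exceptional sequence'' translates literally into ``indecomposable $\tau$-rigid object of $\W_1$, possibly shifted by $[1]$.'' This is exactly what Theorem~\ref{prop:counting}(c) and the recursive definition of signed $\tau$-exceptional sequence recalled at the start of Section~\ref{sec:uniqueness} provide: peeling off $A_n, A_{n-1}, \dots, A_2$ via iterated $\tau$-perpendicular categories leaves a rank-one wide subcategory, and within it the definition says the last remaining term is either a $\tau$-rigid module or $P'[1]$ with $P'$ relatively projective, which in rank one forces $P' = P$. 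Once this bookkeeping is in place, the argument is routine.
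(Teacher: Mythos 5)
Your proposal is correct and follows essentially the same route as the paper: peel off $A_n,\dots,A_2$ to land in the rank-one $\tau$-perpendicular subcategory (via Theorem~\ref{prop:counting}), apply Lemma~\ref{lem:rank1} to see that the only admissible first terms are $P$ and $P[1]$, which handles both the $j=1$ case and the uniqueness claim, and then deduce the case $j>1$ from Theorem~\ref{thm:symm-action}(a). No gaps.
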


\begin{proof}
Define $W^A_j$, for $j=1,\ldots ,n$, as in the proof of Theorem~\ref{thm:unique} (a). By repeated application of Theorem~\ref{prop:counting}, $W^A_j$ is equivalent to a module category over a finite-dimensional algebra of rank $j-1$ for $j=1,\ldots ,n$. Hence $\W^A_2$ is equivalent to the module category of a finite-dimensional algebra with a unique simple module. By Lemma~\ref{lem:rank1}, $\W^A_2$, regarded as a module category, has a unique indecomposable $\tau$-rigid module, given by the unique indecomposable projective module. 
This proves that $s_1(\SS)$ is a signed $\tau$-exceptional sequence, and also that $\SS$ and $s_1(\SS)$ are the only 
signed $\tau$-exceptional sequences of the form $(X, A_2, \dots, A_n)$, i.e. the claim for $j=1$.

The claim for $j>1$ follows by combining this with Theorem \ref{thm:symm-action} (a).
\end{proof}

\begin{proposition}\label{prop:ind-action}
With notation as above 
$s_i \psi = \psi \mu_i$
holds for all $i =1, \dots, n$.
\end{proposition}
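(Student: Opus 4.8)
The plan is to reduce everything to the case $i=1$ and then invoke the results already assembled. First, for $i=1$: let $T = (T_1,\dots,T_n)$ be an ordered support $\tau$-tilting object and write $\psi(T) = \SS = (A_1,\dots,A_n)$. By Remark~\ref{rem:formulas} together with Lemma~\ref{lem:sum}, the wide subcategory $\W_2^A$ depends only on $T_2,\dots,T_n$, namely $\W_2^A = J(T_n \oplus \dots \oplus T_2)$, and $A_1$ is obtained by applying a composite of $\E$-maps to $T_1$; in particular $A_1$ (equivalently $\abs{A_1}$) is the image under that composite of the indecomposable summand of the support $\tau$-tilting object completing $T_2,\dots,T_n$, which is the unique indecomposable summand of $T$ not already among $T_2,\dots,T_n$. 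Now $\mu_1(T) = (T_1^\ast, T_2,\dots,T_n)$, where $T_1^\ast$ is the unique indecomposable replacement of $T_1$; applying $\psi$, the last $n-1$ entries of $\psi(\mu_1(T))$ coincide with those of $\SS$ (same $T_2,\dots,T_n$ and same recursively-built categories), and the first entry is obtained by applying the \emph{same} composite of $\E$-maps to $T_1^\ast$. So $\psi(\mu_1(T))$ has the form $(X, A_2,\dots,A_n)$ for some indecomposable $X$ in $\C(\module\Lambda)$. By Lemma~\ref{lem:first} (the $j=1$ case), the only two signed $\tau$-exceptional sequences of this form are $\SS$ and $s_1(\SS)$; since $T_1^\ast \not\simeq T_1$ the composite of bijective $\E$-maps sends it to something not equal to $A_1$, so $X \neq A_1$ and therefore $\psi(\mu_1(T)) = s_1(\SS) = s_1(\psi(T))$. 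This proves $s_1\psi = \psi\mu_1$.

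For $i>1$, I would combine the $i=1$ case with Theorem~\ref{thm:symm-action}~(b), which gives $\widetilde{\pi}_j \psi = \psi \pi_j$ for each simple transposition. Observe that in the group $G_n$ (equivalently, among the permutations/mutations of ordered support $\tau$-tilting objects) we have the conjugation identity
$$\mu_i = \pi_{i-1}\pi_{i-2}\cdots\pi_1 \,\mu_1\, \pi_1\cdots\pi_{i-2}\pi_{i-1},$$
since conjugating the mutation in the first slot by the cycle that moves slot $i$ to slot $1$ produces the mutation in slot $i$; this is precisely the analogue, on the $\tau$-tilting side, of the defining formula $s_i = \widetilde{\pi}_{i-1}\cdots\widetilde{\pi}_1\, s_1\, \widetilde{\pi}_1\cdots\widetilde{\pi}_{i-1}$ for $s_i$. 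Applying $\psi$ on the left and pushing it through each $\pi_j$ via Theorem~\ref{thm:symm-action}~(b) and through $\mu_1$ via the $i=1$ case just proved, we get
$$\psi\mu_i = \psi\,\pi_{i-1}\cdots\pi_1\,\mu_1\,\pi_1\cdots\pi_{i-1} = \widetilde{\pi}_{i-1}\cdots\widetilde{\pi}_1\,\psi\mu_1\,\pi_1\cdots\pi_{i-1} = \widetilde{\pi}_{i-1}\cdots\widetilde{\pi}_1\,s_1\,\psi\,\pi_1\cdots\pi_{i-1} = \widetilde{\pi}_{i-1}\cdots\widetilde{\pi}_1\,s_1\,\widetilde{\pi}_1\cdots\widetilde{\pi}_{i-1}\,\psi = s_i\psi,$$
as required.

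The main obstacle I anticipate is making the $i=1$ step fully rigorous: one must verify carefully that mutating $T_1$ really leaves the categories $\W_j^A$ and hence the maps $A_j = \E^{\W_{j}}_{A_{j}}\cdots \E_{A_n}(T_{n-j})$ unchanged for $j\geq 2$ (so that $\psi(\mu_1(T))$ and $\psi(T)$ genuinely agree in the last $n-1$ slots), and that the common first-slot map is injective so that $T_1^\ast\not\simeq T_1$ forces $X\not\simeq A_1$. Both follow from Remark~\ref{rem:formulas}, Lemma~\ref{lem:sum}, and the bijectivity in Theorem~\ref{bijection}, but the bookkeeping needs care; everything after that is formal manipulation in $G_n$ using Theorem~\ref{thm:symm-action}~(b).
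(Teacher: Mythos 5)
Your proposal is correct and follows essentially the same route as the paper: the case $i=1$ via Remark~\ref{rem:formulas} and Lemma~\ref{lem:first} (the paper deduces $X\neq A_1$ from injectivity of $\psi$, you from injectivity of the composite of $\E$-maps, which amounts to the same thing), and then the conjugation identity $\mu_i=\pi_{i-1}\cdots\pi_1\mu_1\pi_1\cdots\pi_{i-1}$ combined with Theorem~\ref{thm:symm-action}~(b) for $i>1$. No gaps beyond the bookkeeping you already flag, which is handled exactly as you suggest.
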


\begin{proof}
Consider first the case $i=1$, and assume that $\psi(M_1, M_2, \dots, M_n) = (A_1, A_2, \dots,A_n)$.
We have $\mu_1(M_1, M_2, \dots, M_n) =  (M_1^\ast, M_2, \dots, M_n)$, \sloppy where 
$M^\ast_1 \not \simeq M_1$, and so  $\psi \mu_1(M_1, M_2, \dots, M_n) = \psi(M_1^\ast, M_2, \dots, M_n) =(X, A_2, \dots,A_n)$, for some object $X$. The claim for $i=1$ now follows from Lemma \ref{lem:first}.

For $j>1$, we first note that
$\mu_j =  \pi_{j-1} \pi_{j-2} \dots \pi_1 \mu_1 \pi_1\dots\pi_{j-2}\pi_{j-1}$.

Next we note that, by repeated applications of  
Theorem \ref{thm:symm-action}, we have that

$$\widetilde{\pi_1} \dots \widetilde{\pi_j} \psi  = \psi \pi_1 \dots \pi_j.$$
Combining this with the above we obtain

\begin{align*}
s_j \psi &=  (\widetilde{\pi}_{j-1}  \widetilde{\pi}_{j-2} \dots \widetilde{\pi}_1 s_1 \widetilde{\pi}_1
\dots \widetilde{\pi}_{j-2} \widetilde{\pi}_{j-1}) \psi  \\ 
&=  (\widetilde{\pi}_{j-1}  \widetilde{\pi}_{j-2} \dots \widetilde{\pi}_1 s_1) (\widetilde{\pi}_1
\dots \widetilde{\pi}_{j-2} \widetilde{\pi}_{j-1} \psi)  \\ 
&=  (\widetilde{\pi}_{j-1}  \widetilde{\pi}_{j-2} \dots \widetilde{\pi}_1 s_1) (\psi \pi_1 \dots \pi_{j-2} \pi_{j-1}) \\ 
&=  (\widetilde{\pi}_{j-1}  \widetilde{\pi}_{j-2} \dots \widetilde{\pi}_1 )(s_1 \psi) (\pi_1 \dots \pi_{j-2} \pi_{j-1})   \\
&=  (\widetilde{\pi}_{j-1}  \widetilde{\pi}_{j-2} \dots \widetilde{\pi}_1 )(\psi \mu_1) (\pi_1 \dots \pi_{j-2} \pi_{j-1})   \\
&=  (\widetilde{\pi}_{j-1}  \widetilde{\pi}_{j-2} \dots \widetilde{\pi}_1 \psi) (\mu_1\pi_1 \dots \pi_{j-2} \pi_{j-1})   \\
& = (\psi \pi_{j-1} \pi_{j-2} \dots \pi_1) (\mu_1 \pi_1\dots\pi_{j-2}\pi_{j-1}) \\
& = \psi (\pi_{j-1} \pi_{j-2} \dots \pi_1\mu_1 \pi_1\dots\pi_{j-2}\pi_{j-1}) \\
&= \psi \mu_j .
\end{align*}
\end{proof}	

King and Pressland~\cite[Defn. 1.2]{kingpressland} consider the following group:

\begin{definition} \label{def:mutationgroup} \cite{kingpressland}
Let $M_n = S_n \ltimes G_n$ be the \emph{mutation group} of degree $n$, where $S_n$ acts on $G_n$ via $\sigma(\mu_i)=\mu_{\sigma(i)}$.
\end{definition}

They show that this group acts naturally on labelled (i.e.\ ordered) seeds in a cluster algebra~\cite{fominzelevinskyCA1} via permutation and mutation.
The mutation of support-$\tau$-tilting objects in~\cite[\S2.3]{air} can be regarded as a generalisation of cluster mutation, so it is natural to consider the action of the mutation group in this context. Note that, for an ordered support $\tau$-tilting object $T$, we have 
$\sigma (\mu_i T) = \mu_{\sigma(i)} (\sigma T)$ for a 
permutation $\sigma$ and mutation $\mu_i$, so $M_n$ acts on the set of all ordered support $\tau$-tilting objects, $\T_o^{\Lambda}$. 

We get an induced action of the mutation group on the set of signed $\tau$-exceptional sequences.

\begin{theorem}
	Let $\SS = (A_1, A_2, \dots , A_n)$ be a signed $\tau$-exceptional sequence.
	The operations 
	$$\widetilde{\pi_i}(\SS) \coloneqq (A_1, \dots, A_{i-1}, \E^{(i)}
	(A_{i+1}), \F^{(i)}(A_i), A_{i+2}, \dots, A_n),$$
	$$s_1(\SS) = (\widetilde{A_1}, A_2, \dots , A_n),$$
	and, for $j=2\ldots ,n$,
	$$s_j(\SS) = \widetilde{\pi_{j-1}}  \widetilde{\pi_{j-2}} \dots \widetilde{\pi_1} s_1 \widetilde{\pi_1}
	\dots \widetilde{\pi_{j-2}} \widetilde{\pi_{j-1}} (\SS),$$
	define an action of the mutation group $M_n$ on the set of signed $\tau$-exceptional sequences.
\end{theorem}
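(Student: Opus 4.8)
The statement to prove is that the operations $\widetilde{\pi_i}$ and $s_j$ define an action of the mutation group $M_n = S_n \ltimes G_n$ on the set of signed $\tau$-exceptional sequences. The strategy is to pull everything back to the set $\T_o^\Lambda$ of ordered support $\tau$-tilting objects via the bijections $\psi$ and $\phi$ of Theorem~\ref{bijection2}, where we already know (from the discussion preceding Definition~\ref{def:mutationgroup}) that $M_n$ acts, and then transport the action along $\psi$. Concretely: Theorem~\ref{thm:symm-action}(b) gives $\widetilde{\pi_i}\,\psi = \psi\,\pi_i$, and Proposition~\ref{prop:ind-action} gives $s_i\,\psi = \psi\,\mu_i$. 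Since $\psi$ is a bijection with inverse $\phi$, these read $\widetilde{\pi_i} = \psi\,\pi_i\,\phi$ and $s_i = \psi\,\mu_i\,\phi$. Thus each generator of the abstract group $M_n$, acting on $\T_o^\Lambda$, is conjugated by $\psi$ to the corresponding operation on signed $\tau$-exceptional sequences. Conjugation by a fixed bijection is a group isomorphism between the two symmetric groups, so any set of relations satisfied by $\{\pi_i, \mu_i\}$ acting on $\T_o^\Lambda$ is automatically satisfied by $\{\widetilde{\pi_i}, s_i\}$ acting on the set of signed $\tau$-exceptional sequences.

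**Key steps, in order.** First, record that by Theorem~\ref{bijection2} the map $\psi = \psi^{\module\Lambda}$ is a bijection between $\T_o^\Lambda$ and the set of complete signed $\tau$-exceptional sequences in $\module\Lambda$, with two-sided inverse $\phi$. Second, invoke the fact, established just before Definition~\ref{def:mutationgroup}, that $\sigma(\mu_i T) = \mu_{\sigma(i)}(\sigma T)$ for all $\sigma \in S_n$ and all $i$, so that the generators $\pi_i$ (simple transpositions in $S_n$) and $\mu_i$ (mutations, generators of $G_n$, with $\mu_i^2 = e$) together give a well-defined action of $M_n$ on $\T_o^\Lambda$. Third, combine Theorem~\ref{thm:symm-action}(b) and Proposition~\ref{prop:ind-action} to get the conjugacy identities $\widetilde{\pi_i} = \psi\,\pi_i\,\phi$ and $s_i = \psi\,\mu_i\,\phi$ as permutations of the set of signed $\tau$-exceptional sequences. (One should note here, as the excerpt does in the proof of Theorem~\ref{thm:symm-action}, that $\widetilde{\pi_i}$ and the $s_j$ do send signed $\tau$-exceptional sequences to signed $\tau$-exceptional sequences — this is exactly Theorem~\ref{thm:symm-action}(a) and Lemma~\ref{lem:first} — so they genuinely are permutations of this set.) Fourth, observe that the map $f \mapsto \psi f \phi$ from permutations of $\T_o^\Lambda$ to permutations of the set of signed $\tau$-exceptional sequences is a group isomorphism (it is the isomorphism of symmetric groups induced by the bijection $\psi$). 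Since $\pi_i \mapsto \widetilde{\pi_i}$ and $\mu_i \mapsto s_i$ under this isomorphism, and since the $\pi_i, \mu_i$ generate an action of $M_n$ on $\T_o^\Lambda$, the $\widetilde{\pi_i}, s_i$ generate an action of $M_n$ on the set of signed $\tau$-exceptional sequences, and this action is precisely the one transported from $\T_o^\Lambda$ along $\psi$.

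**The main obstacle.** The conceptual content is entirely contained in the already-proved equivariance statements, so there is essentially no hard step remaining; the only point requiring care is bookkeeping. One must check that the definition of $s_j$ in the theorem statement, $s_j = \widetilde{\pi_{j-1}}\cdots\widetilde{\pi_1}\, s_1\, \widetilde{\pi_1}\cdots\widetilde{\pi_{j-1}}$, matches the relation $\mu_j = \pi_{j-1}\cdots\pi_1\,\mu_1\,\pi_1\cdots\pi_{j-1}$ in $M_n$ under the conjugacy — but this is exactly the displayed computation in the proof of Proposition~\ref{prop:ind-action}, so it is already in hand. Similarly one should confirm that the relations $\mu_i^2 = e$ in $G_n$, together with the Coxeter relations among the $\pi_i$ and the semidirect-product relations $\pi_i \mu_j \pi_i = \mu_{\pi_i(j)}$, are the defining relations of $M_n$, so that verifying the action on $\T_o^\Lambda$ (which the earlier discussion does) suffices; then conjugation transfers everything verbatim. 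Hence the proof is short: it is an application of Theorem~\ref{bijection2}, Theorem~\ref{thm:symm-action}, Proposition~\ref{prop:ind-action}, Lemma~\ref{lem:first}, and the observation that conjugation by a bijection is a group isomorphism of symmetric groups.
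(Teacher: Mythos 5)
Your proposal is correct and follows essentially the same route as the paper: the paper's proof likewise observes that $M_n$ acts on $\T_o^{\Lambda}$ and transports this action along the bijection $\psi$ of Theorem~\ref{bijection2} using the equivariance identities of Theorem~\ref{thm:symm-action} and Proposition~\ref{prop:ind-action}. Your additional bookkeeping (that $\widetilde{\pi_i}$ and $s_j$ preserve the set of signed $\tau$-exceptional sequences, and that conjugation by a bijection transfers the defining relations of $M_n$) just makes explicit what the paper leaves implicit.
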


\begin{proof}
As already noted, $M_n$ acts on the set of ordered support $\tau$-tilting objects,
and the result hence follows directly from combining Theorem~\ref{thm:symm-action} and Proposition~\ref{prop:ind-action}
with the fact that $\psi$ is a bijection between the set of ordered support $\tau$-tilting objects and the set of signed $\tau$-exceptional sequences (Theorem~\ref{bijection2}).  
\end{proof}	

\section{Examples relating to braid actions}
\label{sec:examples}
Note that the braid group, $B_n$, on $n$ strands, has the symmetric group $S_n$ as a quotient. Since $S_n$ is a subgroup of the mutation group $M_n=S_n \ltimes G_n$, it follows that $B_n$ acts naturally on the set of all ordered support $\tau$-tilting objects $\T_o^{\Lambda}$ and thus on the set of all complete signed $\tau$-exceptional sequences for $\Lambda$, by Theorem~\ref{bijection2}. However, this action is highly non-transitive in general, since the braid group is only permuting the possible orderings of each support $\tau$-tilting object.

It is therefore natural to ask whether there is a transitive action. In the first part of this section, we give an example to show that, at least via the mutation group, this is not possible: we give an algebra for which there is no transitive action of $B_2$ which factors through the action of $M_2$ on $\T_o^{\Lambda}$.

Let $Q$ be the Kronecker quiver
\raisebox{1pt}{$\xymatrix{1 \ar@<-0.5ex>[r] \ar@<0.5ex>[r] & 2}$}, and let $\Lambda$ be the corresponding path algebra. Let $P_i$ (respectively, $I_i$), for $i=1,2$ be the indecomposable projective (respectively, injective) $\Lambda$-modules corresponding to the vertices of $Q$.
Then the $\tau$-tilting (equivalently, tilting) $\Lambda$-modules are the modules
$\tau^{-r}P_1\amalg \tau^{-r}P_2$,
$\tau^{-r}P_1\amalg \tau^{-(r+1)}P_2$,
$\tau^r I_1 \amalg \tau^r I_2$ and
$\tau^{r+1} I_1 \amalg \tau^rI_2$,
for $r=0,1,2,\ldots $.
The support $\tau$-tilting (equivalently, support tilting) objects over $\Lambda$ which are not $\tau$-tilting are $I_1\amalg P_2[1]$, $P_1[1]\oplus P_2$ and $P_1[1]\oplus P_2[1]$. In particular, note that $\Lambda$ is not $\tau$-tilting finite.

The mutation group (see Definition~\ref{def:mutationgroup}) is $M_2=S_2\ltimes G_2$, where $S_2=\{1,\sigma\}$ is the symmetric group of degree $2$ and
$G_2=\langle \mu_1,\mu_2 \ :\ \mu_1^2=\mu_2^2=e\rangle$.
We have $\sigma \mu_1=\mu_2\sigma$ and
$\sigma \mu_2=\mu_1\sigma$. The action of $M_2$ on the set $\T_o^{\Lambda}$ of ordered basic support $\tau$-tilting objects is shown in Figure~\ref{fig:M2action}.

\begin{proposition}
There is no transitive action of the braid group on two strands on $\T_o^{\Lambda}$ which factors through the action of $M_2$ on
$\T_o^{\Lambda}$.
\end{proposition}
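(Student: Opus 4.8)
The plan is to analyze the structure of the $M_2$-action on $\T_o^{\Lambda}$, as depicted in Figure~\ref{fig:M2action}, and show that the ``underlying undirected graph'' (or more precisely, the orbit structure together with the relations imposed) is incompatible with any transitive $B_2$-action factoring through it. Since $B_2 \cong \mathbb{Z}$ is the infinite cyclic group, an action of $B_2$ on a set $X$ is the same as a choice of a single bijection $X \to X$, and transitivity means this bijection is a single cycle on $X$. So the real claim is: the permutation of $\T_o^{\Lambda}$ generated (in the obvious sense) by the images of $M_2$ under any group homomorphism $B_2 \to (\text{whatever subgroup of } \mathrm{Sym}(\T_o^{\Lambda}) \text{ the } M_2\text{-action lands in})$ cannot act transitively.

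First I would make precise what ``factors through the action of $M_2$'' means: we ask for a group homomorphism $\rho\colon B_2 \to \mathrm{Sym}(\T_o^{\Lambda})$ whose image is contained in the image of the $M_2$-action, i.e. every $\rho(b)$ is realized by some element of $M_2$ acting on $\T_o^{\Lambda}$. Then I would identify the image $H$ of $M_2$ in $\mathrm{Sym}(\T_o^{\Lambda})$ explicitly from Figure~\ref{fig:M2action}. The key structural observation to extract is that the generator $\sigma$ (simple transposition) and the mutations $\mu_1,\mu_2$ all preserve a certain finite invariant — namely, for each unordered support $\tau$-tilting object there are exactly two orderings, and $\sigma$ swaps them while $\mu_i$ changes the underlying unordered object but in a constrained way. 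Crucially, the three ``non-$\tau$-tilting'' support $\tau$-tilting objects $I_1\amalg P_2[1]$, $P_1[1]\oplus P_2$, $P_1[1]\oplus P_2[1]$, together with the $\tau$-tilting ones, should be seen to split into components under $M_2$ in a way that no single bijection can connect while remaining in $H$. Concretely: I expect the $M_2$-action on $\T_o^{\Lambda}$ to have (after one understands Figure~\ref{fig:M2action}) the property that it is already transitive as an $M_2$-action but that $H$ is not cyclic, and moreover no cyclic subgroup of $H$ is transitive — this is the crux.

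The argument I would then run: since $B_2$ is cyclic, $\rho(B_2)$ is a cyclic subgroup of $H$; a cyclic group acts transitively on a set of size $N = |\T_o^{\Lambda}|$ only if it contains an $N$-cycle. I would count $|\T_o^{\Lambda}|$ — it is infinite, since $\Lambda$ is not $\tau$-tilting finite (each $\tau$-tilting module yields $2$ orderings, there are infinitely many, plus the three exceptional ones give $2+2+2$ more... actually $P_1[1]\oplus P_2[1]$ and $P_1[1]\oplus P_2$ each give $2$ orderings and $I_1 \amalg P_2[1]$ gives $2$). So $N = \infty$. A cyclic group acting transitively on an infinite set must be infinite cyclic and must act freely and transitively, i.e. the set must be a single $\mathbb{Z}$-orbit with trivial stabilizers. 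So I must show: no element of $H$ has an infinite orbit covering all of $\T_o^{\Lambda}$ with trivial point-stabilizers, equivalently no element $h \in H$ generates a transitive $\mathbb{Z}$-action. The cleanest route is to exhibit a finite $H$-invariant quotient (or block system) of $\T_o^{\Lambda}$ on which $H$ acts, such that no cyclic subgroup of the quotient group acts transitively on the quotient set — then no cyclic subgroup of $H$ can act transitively on $\T_o^{\Lambda}$ either. The natural candidate quotient is ``$\tau$-tilting vs.\ support-but-not-$\tau$-tilting'', or better, reading off from Figure~\ref{fig:M2action} a finite piece near the three exceptional objects: I would look for, say, a block of size $3$ or $4$ (the ``exceptional'' region) on which the induced action is by a non-cyclic-transitive group such as a Klein four-group or $S_3$ acting with a fixed point. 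If for instance $H$ induces on a certain $3$-element block an action of $S_3$ where every element has order dividing $3$ or fixes a point — more simply, if the induced action has the property that no single element is a $3$-cycle on that block while the block must be a single orbit for transitivity — we get the contradiction.

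The main obstacle I anticipate is extracting the precise combinatorial data from Figure~\ref{fig:M2action} and pinning down exactly which finite invariant to use: one must verify that $\sigma$, $\mu_1$, $\mu_2$ genuinely preserve the chosen block system (an $M_2$-invariant equivalence relation on $\T_o^{\Lambda}$) and compute the induced permutation group on the quotient. Once that finite computation is in hand, the conclusion is immediate: $\rho(B_2)$ cyclic $\Rightarrow$ its image in the quotient is cyclic $\Rightarrow$ not transitive on the quotient $\Rightarrow$ $\rho$ not transitive on $\T_o^{\Lambda}$. A secondary subtlety is making sure the definition of ``factors through'' is used correctly — in particular that $\rho(b)$ must literally lie in $H = \mathrm{im}(M_2)$ for every $b$, not merely that $\rho$ be compatible with some $M_2$-structure; if a weaker reading were intended the same block-system argument still applies since the block system is preserved by all of $H$ and hence by anything built from $H$.
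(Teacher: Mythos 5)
Your opening reduction is correct and is exactly the paper's: since $B_2\cong\mathbb{Z}$, a transitive action factoring through $M_2$ would force some single element of $M_2$ (equivalently, a cyclic subgroup of the image of $M_2$ in $\mathrm{Sym}(\T_o^{\Lambda})$) to act transitively on $\T_o^{\Lambda}$, so the whole content of the proposition is the verification that no such element exists. This is precisely where your proposal stops: you describe a strategy (find a finite $M_2$-invariant block system on which no cyclic subgroup acts transitively) but do not exhibit the block system, and you explicitly flag the extraction of the combinatorics from Figure~\ref{fig:M2action} as an unresolved obstacle. Worse, the candidate invariants you float do not work as stated: the partition into $\tau$-tilting versus non-$\tau$-tilting ordered objects is not a block system, since $\mu_1$ sends $(P_1,P_2)$ to $(P_1[1],P_2)$ (crossing the partition) but sends $(\tau^{-1}P_1,\tau^{-1}P_2)$ to $(P_1,\tau^{-1}P_2)$ (staying inside it), so $\mu_1$ neither preserves nor swaps the two classes; and the finite ``exceptional region'' around $I_1\amalg P_2[1]$, $P_1[1]\oplus P_2$, $P_1[1]\oplus P_2[1]$ is not stable under mutation, so it cannot serve as a block either. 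As written, the crux of the proof is missing.

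The gap is fixable along your lines, but the correct invariant has to be identified. Figure~\ref{fig:M2action} shows that $\T_o^{\Lambda}$ is a ladder: two columns, each an infinite path whose edges are labelled alternately $\mu_1$ and $\mu_2$, with $\sigma$ swapping the columns rung by rung. Reducing the position along each column modulo $2$ and remembering the column gives a four-element quotient on which both $\mu_1$ and $\mu_2$ act by flipping the parity and $\sigma$ acts by swapping the columns, so $M_2$ acts through a Klein four-group acting regularly; every element of that group has orbits of size at most $2$, hence no cyclic subgroup is transitive on the quotient, and therefore none is transitive on $\T_o^{\Lambda}$. The paper instead argues directly: it writes each element of $M_2$ in normal form $\sigma^{\varepsilon}\mu_i\mu_{\sigma(i)}\cdots$ and checks from the figure that the cyclic group it generates is never transitive (elements of $G_2$ preserve the columns, and an element involving $\sigma$ has square in $G_2$, which acts on a column as a translation or a reflection and so has orbits far from covering it). Either route completes the argument, but your proposal needs one of these verifications actually carried out.
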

\begin{proof}
Note that the braid group $B_2$ on two strands is isomorphic to the infinite cyclic group. If there was a transitive action of $B_2$ on $\T_o^{\Lambda}$ factoring through the action of $M_2$ on $\T_o^{\Lambda}$, then there would be a subgroup of $M_2$ which is a quotient of $B_2$ which acted transitively on $\T_o^{\Lambda}$. Such a subgroup would have to be cyclic.

The elements of $M_2$ are of the form
$$\sigma^\varepsilon \mu_i\mu_{\sigma(i)}\mu_i\cdots \mu_i$$
and
$$\sigma^\varepsilon \mu_i\mu_{\sigma(i)}\mu_i\cdots \mu_{\sigma(i)},$$
where $i\in\{1,2\}$ and $\varepsilon\in\{0,1\}$.
It is easy to check from the description of the action of $M_2$ (see Figure~\ref{fig:M2action}) that, for each of these elements, the (infinite) cyclic group it generates does not act transitively on $\T_o^{\Lambda}$.
\end{proof}
\begin{corollary}
There is no transitive action of the braid group on two strands on the set of all signed $\tau$-exceptional sequences which factors through the action of $M_2$ on
the set of all such sequences.
\end{corollary}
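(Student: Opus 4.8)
The plan is to derive this corollary directly from the preceding proposition together with the main structural result of Section~\ref{sec:mutation}, namely that $\psi$ intertwines the $M_2$-action on $\T_o^{\Lambda}$ with the $M_2$-action on the set of complete signed $\tau$-exceptional sequences (this is the content of Theorem~\ref{thm:symm-action}(b) and Proposition~\ref{prop:ind-action}, packaged into the final theorem of Section~\ref{sec:mutation}, via the bijection of Theorem~\ref{bijection2}). Since $\psi$ is an $M_2$-equivariant bijection, an $M_2$-orbit on $\T_o^{\Lambda}$ is carried bijectively to an $M_2$-orbit on the set of signed $\tau$-exceptional sequences; in particular, the $M_2$-action on the latter set is transitive if and only if it is transitive on $\T_o^{\Lambda}$, and more generally the orbit structures correspond. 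The same statement holds for the action of any subgroup $H\leq M_2$.

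The key steps, in order, are as follows. First I would recall that, by Theorem~\ref{bijection2}, $\psi^{-1}=\phi$ is a bijection between ordered support $\tau$-tilting objects and complete signed $\tau$-exceptional sequences, and that by the final theorem of Section~\ref{sec:mutation} this bijection is compatible with the $M_n$-actions on the two sets, i.e.\ $\psi\,\mu = \mu\,\psi$ and $\psi\,\sigma = \sigma\,\psi$ in the appropriate sense (where on the sequence side $\mu_i$ acts as $s_i$ and $\sigma$ as the appropriate composite of $\widetilde{\pi_j}$'s). Second, I would observe that any hypothetical transitive $B_2$-action on the set of signed $\tau$-exceptional sequences that factors through $M_2$ would, transported via $\psi$, give a transitive $B_2$-action on $\T_o^{\Lambda}$ factoring through $M_2$ — precisely because $\psi$ is an $M_2$-equivariant bijection, so it conjugates the image subgroup of $M_2$ acting on one set to the image subgroup acting on the other, preserving transitivity. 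Third, I would invoke the Proposition just proved, which says no such transitive action on $\T_o^{\Lambda}$ exists, to obtain the desired contradiction.

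There is essentially no hard part here: the corollary is a formal consequence of equivariance. The only point requiring a line of care is that ``factors through the action of $M_2$'' must be interpreted identically on both sides — that is, a $B_2$-action factoring through $M_2$ means a group homomorphism $B_2\to M_2$ composed with the $M_2$-action — and then $M_2$-equivariance of $\psi$ immediately transfers such factorisations and their transitivity between the two sets. I would write the proof in two or three sentences: suppose such a transitive $B_2$-action on the set of signed $\tau$-exceptional sequences exists, factoring as $B_2\to M_2 \to \operatorname{Sym}(\text{sequences})$; applying the $M_2$-equivariant bijection $\psi^{-1}$ conjugates this to a transitive action of $B_2$ on $\T_o^{\Lambda}$ factoring through $M_2$; this contradicts the Proposition.

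\begin{proof}
By Theorem~\ref{bijection2}, $\psi$ is a bijection between $\T_o^{\Lambda}$ and the set of complete signed $\tau$-exceptional sequences for $\Lambda$, and by the theorem at the end of Section~\ref{sec:mutation} (combining Theorem~\ref{thm:symm-action} and Proposition~\ref{prop:ind-action}) this bijection is $M_2$-equivariant. Suppose, for contradiction, that there is a transitive action of $B_2$ on the set of complete signed $\tau$-exceptional sequences which factors through the action of $M_2$, that is, which is given by a group homomorphism $B_2 \to M_2$ followed by the $M_2$-action. Transporting this action along the $M_2$-equivariant bijection $\psi^{-1}$ yields an action of $B_2$ on $\T_o^{\Lambda}$ which again factors through the same homomorphism $B_2 \to M_2$ followed by the $M_2$-action on $\T_o^{\Lambda}$, and which is transitive since $\psi^{-1}$ is a bijection. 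This contradicts the preceding Proposition. Hence no such transitive action exists.
\end{proof}
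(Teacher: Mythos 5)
Your argument is correct and matches the paper's (implicit) reasoning: the corollary is stated as an immediate consequence of the preceding Proposition precisely because the $M_2$-action on complete signed $\tau$-exceptional sequences is defined by transporting the action on $\T_o^{\Lambda}$ through the $M_2$-equivariant bijection $\psi$ of Theorem~\ref{bijection2}, so transitivity of any $B_2$-action factoring through $M_2$ would transfer to $\T_o^{\Lambda}$ and contradict the Proposition. Your write-up simply makes this equivariance-transfer explicit, which is exactly the intended argument.
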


\begin{figure}
$$\xymatrix{
&
\vdots
&
\\
\tau^{-1}P_1,\tau^{-1}P_2
\ar@{<->}[d]_{\mu_1}
\ar@{<->}[u]_{\mu_2}
\ar@{<->}[rr]_{\sigma}
&&
\tau^{-1}P_2,\tau^{-1}P_1 
\ar@{<->}[d]_{\mu_2}
\ar@{<->}[u]_{\mu_1}
\\
P_1,\tau^{-1}P_2
\ar@{<->}[d]_{\mu_2}
\ar@{<->}[rr]_{\sigma} &&
\tau^{-1}P_2,P_1
\ar@{<->}[d]_{\mu_1}
\\
P_1,P_2
\ar@{<->}[d]_{\mu_1}
\ar@{<->}[rr]_{\sigma}
&&
P_2,P_1
\ar@{<->}[d]_{\mu_2}
\\
P_1[1],P_2
\ar@{<->}[d]_{\mu_2}
\ar@{<->}[rr]_{\sigma}
&&
P_2,P_1[1]
\ar@{<->}[d]_{\mu_1}
\\
P_1[1],P_2[1]
\ar@{<->}[d]_{\mu_1}
\ar@{<->}[rr]_{\sigma}
&&
P_2[1],P_1[1]
\ar@{<->}[d]_{\mu_2}
\\
I_1,P_2[1]
\ar@{<->}[d]_{\mu_2}
\ar@{<->}[rr]_{\sigma}
&&
P_2[1],I_1
\ar@{<->}[d]_{\mu_1}
\\
I_1,I_2
\ar@{<->}[d]_{\mu_1}
\ar@{<->}[rr]_{\sigma}
&&
I_2,I_1
\ar@{<->}[d]_{\mu_2}
\\
\tau I_1,I_2
\ar@{<->}[d]_{\mu_2}
\ar@{<->}[rr]_{\sigma}
&&
I_2,\tau I_1
\ar@{<->}[d]_{\mu_1}
\\
\tau I_1,\tau I_2
\ar@{<->}[d]_{\mu_1}
\ar@{<->}[rr]_{\sigma}
&&
\tau I_2,\tau I_1
\ar@{<->}[d]_{\mu_2}
\\
&
\vdots
&
\\
}$$
\caption{The action of $M_2$ on $T_o^{\Lambda}$ for the Kronecker algebra $\Lambda$.}
\label{fig:M2action}
\end{figure}

The transitive braid group action on the set of complete exceptional sequences for a hereditary algebra arises in the following way~\cite[Lemma 9,Theorem]{cb-exc},~\cite[\S5,\S7]{ringel-exc}.
Write the braid group $B_n$ on $n$ strands in the usual way as
$$B_n=\left\langle \sigma_1,\ldots ,\sigma_{n-1}\,:\,\begin{array}{cc}
\sigma_i\sigma_j=\sigma_j\sigma_i, & |i-j|>1; \\
\sigma_i\sigma_j\sigma_i=\sigma_j\sigma_j\sigma_i, & |i-j|=1
\end{array}
\right\rangle.
$$
Given a complete exceptional sequence $(X_1,\ldots ,X_n)$ and $1\leq i\leq n$, there is a unique complete exceptional sequence of the form $(X_1,\ldots ,X_{i-1},X_{i+1},Y,X_{i+2},\ldots ,X_n)$ for some exceptional indecomposable module $Y$. The left version of the action of the braid group is given by: $$\sigma_i(X_1,\ldots ,X_n)=
(X_1,\ldots ,X_{i-1},X_{i+1},Y,X_{i+2},\ldots ,X_n).$$
If $n=2$, this means, in particular, that if $(X_1,X_2)$ is a complete exceptional sequence then there is an exceptional sequence of the form $(X_2,Y)$.

However, if $(X_1,X_2)$ is a complete $\tau$-exceptional sequence, it can be the case that there is no $\tau$-exceptional sequence of the form $(X_2,Y)$. This is not surprising, since the requirement on $X_2$ in the first case is that it is a $\tau$-rigid module, while the requirement in the second case is that it is a $\tau$-rigid object in the $\tau$-perpendicular category of $Y$. In the hereditary case, where a module $M$ is $\tau$-rigid if and only if $Ext^1(M,M) = 0$, a module in  
$J(Y) = Y^{\perpe}$ is $\tau$-rigid in $J(Y)$ if and only if it is $\tau$-rigid in $\module\Lambda$, but this is not true in general (see~\cite[\S1]{bm-exc}).  
We illustrate this point with the following example.

Let $\Gamma$ be the algebra given by the path algebra of the quiver
\raisebox{1pt}{$\xymatrix{1 \ar@<-0.5ex>[r]_{\alpha}  & 2 \ar@<-0.5ex>[l]_{\beta}}$}, subject to the relation $\beta\alpha=0$. The $\tau$-exceptional sequences over $\Gamma$ were given in~\cite[\S6.2]{bm-exc}. They are:
$$\left(1,\begin{smallmatrix} 2 \\ 1 \\ 2\end{smallmatrix}\right),
\left(2,\begin{smallmatrix} 1 \\ 2 \end{smallmatrix}\right),
\left(\begin{smallmatrix} 2 \\ 1 \end{smallmatrix},2\right),
\left(\begin{smallmatrix} 1 \\ 2 \end{smallmatrix},1\right).$$
We make the following observation.

\begin{lemma}
For the algebra $\Gamma$, which has complete $\tau$-exceptional sequences of length $2$, there is a $\tau$-rigid indecomposable module $X$ such that there is no complete $\tau$-exceptional sequence of the form $(X,Y)$ for some $\tau$-rigid module $Y$.
\end{lemma}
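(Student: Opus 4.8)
The plan is to take $X$ to be the indecomposable projective $\Gamma$-module $P_2=\begin{smallmatrix} 2 \\ 1 \\ 2 \end{smallmatrix}$ at vertex $2$ (which is in fact projective-injective), and to show that it never occurs as the first term of a complete $\tau$-exceptional sequence over $\Gamma$. First I would record that $X$ is $\tau$-rigid, which is immediate since it is projective: $\Hom(X,\tau X)=\Hom(X,0)=0$. Next I would observe that in a complete $\tau$-exceptional sequence $(X_1,X_2)$ over the rank-$2$ algebra $\Gamma$ the term $X_2$ is $\tau$-rigid by definition, so the statement reduces to showing that $X$ does not occur as a first term.

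The quick way to conclude is then to invoke the classification recalled above: the complete $\tau$-exceptional sequences over $\Gamma$ are exactly the four sequences listed (from \cite[\S6.2]{bm-exc}), and $\begin{smallmatrix} 2 \\ 1 \\ 2 \end{smallmatrix}$ appears in none of them as a first entry — it occurs only as the second entry, in $\left(1,\begin{smallmatrix} 2 \\ 1 \\ 2 \end{smallmatrix}\right)$. This proves the claim.

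For a self-contained argument not relying on that list, I would instead suppose $(X,Y)$ is a complete $\tau$-exceptional sequence and derive a contradiction. Then $Y$ is a $\tau$-rigid indecomposable $\Gamma$-module and $X$ lies in, and is $\tau$-rigid in, the $\tau$-perpendicular category $J(Y)$; in particular $X\in J(Y)$. The indecomposable $\Gamma$-modules are the two simples $S_1,S_2$ together with $\begin{smallmatrix} 1 \\ 2 \end{smallmatrix}$, $\begin{smallmatrix} 2 \\ 1 \end{smallmatrix}$ and $\begin{smallmatrix} 2 \\ 1 \\ 2 \end{smallmatrix}$, and among these the $\tau$-rigid ones are precisely $S_1$, $S_2$ and the two projectives $\begin{smallmatrix} 1 \\ 2 \end{smallmatrix}$, $\begin{smallmatrix} 2 \\ 1 \\ 2 \end{smallmatrix}$ — one checks, for instance, that $\tau\begin{smallmatrix} 2 \\ 1 \end{smallmatrix}=\begin{smallmatrix} 1 \\ 2 \end{smallmatrix}$ while $\Hom\left(\begin{smallmatrix} 2 \\ 1 \end{smallmatrix},\begin{smallmatrix} 1 \\ 2 \end{smallmatrix}\right)\neq 0$, so $\begin{smallmatrix} 2 \\ 1 \end{smallmatrix}$ is not $\tau$-rigid. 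A direct computation of perpendicular categories, using $\tau S_1=S_2$ and $\tau S_2=S_1$, then gives $J(S_1)=\add\begin{smallmatrix} 1 \\ 2 \end{smallmatrix}$, $J(S_2)=\add\begin{smallmatrix} 2 \\ 1 \end{smallmatrix}$, $J\!\left(\begin{smallmatrix} 1 \\ 2 \end{smallmatrix}\right)=\add S_2$ and $J\!\left(\begin{smallmatrix} 2 \\ 1 \\ 2 \end{smallmatrix}\right)=\add S_1$. None of these four subcategories contains $X=\begin{smallmatrix} 2 \\ 1 \\ 2 \end{smallmatrix}$, so no such $Y$ can exist.

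The only step involving genuine work is this last finite computation (equivalently, verifying the classification of $\tau$-exceptional sequences over $\Gamma$): since $\Gamma$ has only five indecomposable modules it is routine bookkeeping — reading off the radical and socle series, computing $\tau$ on the three non-projective indecomposables from minimal projective presentations, and intersecting the $\Hom$- and ${}^{\perp}$-vanishing conditions — and there is no conceptual obstacle. I would also note, to connect the example with the surrounding discussion, that $\left(1,\begin{smallmatrix} 2 \\ 1 \\ 2 \end{smallmatrix}\right)$ is itself a complete $\tau$-exceptional sequence, so the example shows in particular that the naive braid-type move passing from $(X_1,X_2)$ to a $\tau$-exceptional sequence beginning with $X_2$ has no analogue here.
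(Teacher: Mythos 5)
Your argument is correct and its main route is exactly the paper's: take $X=P_2=\begin{smallmatrix} 2 \\ 1 \\ 2\end{smallmatrix}$ and observe from the classification in \cite[\S6.2]{bm-exc} that, although $\tau$-rigid, it never occurs as the first term of a complete $\tau$-exceptional sequence. Your supplementary self-contained verification is also accurate (the $\tau$-rigid indecomposables are $S_1,S_2,P_1,P_2$, and $J(S_1)=\add P_1$, $J(S_2)=\add\begin{smallmatrix} 2 \\ 1 \end{smallmatrix}$, $J(P_1)=\add S_2$, $J(P_2)=\add S_1$, none containing $P_2$), but it goes beyond what the paper does, which simply cites the list.
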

\begin{proof}
This can be seen from the list of $\tau$-exceptional sequences in $\module\Gamma$ above, since the module $P_2=\begin{smallmatrix} 2 \\ 1 \\ 2 \end{smallmatrix}$, despite being $\tau$-rigid, does not occur as the first term in any of the complete $\tau$-exceptional sequences in the list.
\end{proof}
Note that it follows that $P_2$ also does not occur as the first term in a complete signed $\tau$-exceptional sequence. 

The phenomenon described above also occurs with the right version~\cite{cb-exc,ringel-exc} of the action of the braid group. Given a complete exceptional sequence $(X_1,\ldots ,X_n)$ and $1\leq i\leq n$, there is a unique complete exceptional sequence of the form $(X_1,\ldots ,X_{i-1},Y,X_{i+1},X_{i+2},\ldots ,X_n)$ for some exceptional indecomposable module $Y$. The right version of the action of the braid group is given by: $$\sigma_i(X_1,\ldots ,X_n)=
(X_1,\ldots ,X_{i-1},Y,X_{i+1},X_{i+2},\ldots ,X_n).$$
If $n=2$, this means, in particular, that if $(X_1,X_2)$ is a complete exceptional sequence then there is an exceptional sequence of the form $(Y,X_1)$.

We can see from the list above that, although $(\begin{smallmatrix} 2 \\ 1 \end{smallmatrix},2)$ is a complete $\tau$-exceptional sequence for $\Gamma$, there is no such sequence of the form $(Y,\begin{smallmatrix} 2 \\ 1\end{smallmatrix})$. This is because, although $\begin{smallmatrix} 2 \\ 1 \end{smallmatrix}$ is $\tau$-rigid in $J(2)$, it is not a $\tau$-rigid $\Gamma$-module.

\section*{Acknowledgement}
The first named author would like to thank Eric Hanson for insightful conversations related to this work.


\begin{thebibliography}{99}

\bibitem{auslandersmalo}
M.~Auslander and S.~O.~Smalø,
\emph{Almost split sequences in subcategories},
J. Algebra 69 (1981), 426--454.
Addendum: J. Algebra 71 (1981), 592--594.

\bibitem{air}
T.~Adachi, O.~Iyama and I.~Reiten,
\emph{$\tau-$tilting theory},
Compos. Math. 150 (2015), no. 3, 415--452
%
\bibitem{asai}
S.~Asai,
\emph{Semibricks},
Int. Math. Res. Not. (2020) no. 16, 4993--5054.
%
\bibitem{bst}
T.~Br\"{u}stle, D.~Smith and H.~Treffinger,
\emph{Wall and chamber structure for finite-dimensional algebras
},
Adv. Math. 354 (2019), 106746, 31 pp.
%
\bibitem{bh}
A.~B.~Buan and E.~Hanson,
\emph{$\tau$-perpendicular wide subcategories}, Preprint arXiv: 2107.01141 [math.RT], 2021.

%
\bibitem{bm-exc}
A.~B.~Buan and B.~R.~Marsh, 
\emph{$\tau$-exceptional sequences},
 Journal of Algebra 585 (2021), 36--68. 

\bibitem{bm-wide}
A.~B.~Buan and B.~R.~Marsh,
\emph{A category of wide subcategories},
Int. Math. Res. Not. (2021) No. 13, 10278--10338.
%
\bibitem{cb-exc}
W.~Crawley-Boevey,
\emph{Exceptional sequences of representations of quivers},
Proceedings of the Sixth International Conference on Representations of Algebras (Ottawa, ON, 1992), 7 pp.,
Carleton-Ottawa Math. Lecture Note Ser., 14, Carleton Univ., Ottawa, ON, 1992.
%
\bibitem{DIJ}
L.~Demonet, O.~Iyama, G.~Jasso,
\emph{$\tau$-tilting finite algebras, bricks, and g-vectors},
Int. Math. Res. Not. (2019), no. 3, 852--892. 
%
\bibitem{DIRRT}
L.~Demonet, O.~Iyama, N.~Reading, I.~Reiten, H.~Thomas,
\emph{Lattice theory of torsion classes},
Preprint arXiv:1711.01785v2 [math.RT], 2017; updated 2018.
%
\bibitem{en}
H.~Enomoto,
\emph{Rigid modules and ICE-closed subcategories in quiver representations},
J. Algebra 594 (2022), 364--388.
%
\bibitem{en-sak}
H.~Enomoto, A.~Sakai,
\emph{ICE-closed subcategories an wide $\tau$-tilting modules},
Math. Z. 300 (2022), no. 1, 541--577.
%
\bibitem{fominzelevinskyCA1}
S.~Fomin and A.~Zelevinsky,
\emph{Cluster algebras. I. Foundations},
J. Amer. Math. Soc. 15 (2002), no. 2, 497--529.
%
\bibitem{geiglelenzing}
W.~Geigle and H.~Lenzing,
\emph{Perpendicular categories with applications to representations and sheaves},
J. Algebra 144 (1991), no. 2, 273--343. 
%
\bibitem{hansonigusa}
E.~J.~Hanson and K.~Igusa,
\emph{$\tau$-cluster morphism categories and picture groups},
Comm. Algebra 49 (2021), no. 10, 4376--4415.
%
\bibitem{igusatodorov}
K.~Igusa and G.~Todorov,
\emph{Signed exceptional sequences and the cluster morphism category},
Preprint arXiv:1706.02041 [math.RT], 2017.
%
\bibitem{itw}
K.~Igusa, G.~Todorov and J.~Weyman,
\emph{Picture groups of finite type and cohomology in type $A_n$},
Preprint arXiv:1609.02636 [math.RT], 2016.
%
\bibitem{jasso}
G.~Jasso,
\emph{Reduction of $\tau$-tilting modules and torsion pairs},
Int. Math. Res. Not. (2015), no. 16, 7190--7237.
%
\bibitem{kingpressland}
A.~King and M.~Pressland,
\emph{Labelled seeds and the mutation group},
Math. Proc. Cambridge Philos. Soc. 163 (2017), no. 2, 193--217.
%
\bibitem{ms}
F.~Marks and J.~\v{S}\v{t}ov\'{\i}\v{c}ek,
\emph{Torsion classes, wide subcategories and localisations},
Bull. London Math. Soc. 49 (2017), Issue 3, 405--416.
%
\bibitem{mt}
H.~O.~Mendoza and H.~Treffinger,
\emph{Stratifying systems through $\tau$-tilting theory},
Doc. Math. 25 (2020), 701--720.

\bibitem{ringel-exc}
C.~M.~Ringel,
\emph{The braid group action on the set of exceptional sequences of a hereditary Artin algebra},
Abelian group theory and related topics (Oberwolfach, 1993), 339–352,
Contemp. Math., 171, Amer. Math. Soc., Providence, RI, 1994.
%
\bibitem{schofield}
A.~Schofield,
\emph{Semi-invariants of quivers},
J. London Math. Soc. (2) 43 (1991), no. 3, 385--395.
\end{thebibliography}
\end{document}